\newtheorem{thm}{Theorem}
\newtheorem{lem}{Lemma}
\theoremstyle{definition}
\newtheorem{remk}{Remark}
\newtheorem{defn}{Definition}
\newtheorem{expl}{Example}
\journal{Electronic Journal of Probability}
\begin{document}

\begin{frontmatter}

\title{\textbf{On convergence of general wavelet decompositions\\ of nonstationary stochastic processes}\\[5mm]
\small \textit{Short title: Uniform convergence of wavelet decompositions}}

\author[ku]{Yuriy Kozachenko}
\ead{ykoz@ukr.net}

\author[la]{Andriy Olenko\corref{cor1}}
\ead{a.olenko@latrobe.edu.au}

\author[ku]{Olga Polosmak}
\ead{DidenkoOlga@yandex.ru}

\cortext[cor1]{Corresponding author. Phone: +61-3-9479-2609 \quad  Fax:  +61-3-9479-2466}
\address[ku]{Department of Probability Theory, Statistics and Actuarial Mathematics,\\ Kyiv University, Kyiv, Ukraine}
\address[la]{Department of Mathematics and Statistics, La Trobe University,\\ Victoria 3086, Australia}

\begin{abstract}
The paper investigates uniform convergence of wavelet expansions of Gaussian random processes. The convergence is obtained under simple general conditions on processes and wavelets which can be easily verified. Applications of the developed technique are shown for several classes of stochastic processes. In particular, the main theorem is adjusted to the fractional Brownian motion case. New results on the rate of convergence of the wavelet expansions in the space $C([0,T])$ are also presented.
\end{abstract}

\begin{keyword}
Convergence in probability \sep Uniform convergence \sep Convergence rate \sep Gaussian process  \sep Fractional Brownian motion   \sep Wavelets


\MSC 60G10 \sep 60G15 \sep 42C40
\end{keyword}

\end{frontmatter}

\section{Introduction}

In the book \cite{har} wavelet expansions of non-random functions bounded on $\mathbb R$ were studied in different
spaces. However, developed deterministic methods may not be appropriate to investigate wavelet expansions of stochastic processes. For example, in the majority of cases, which are interesting from theoretical and practical application points of view, stochastic processes have almost surely unbounded sample paths on $\mathbb R.$ It indicates the necessity of elaborating special stochastic techniques.

Recently, a considerable attention was given to the properties of the wavelet orthonormal series representation of
random processes. More information on convergence of wavelet expansions of random processes in various spaces, references and numerous applications can be found in \cite{Cambanis, didier, ist, kozol1, kozol2, kozol3, kozpol, Meyer, kur, zha}. Most known stochastic results concern the mean-square  or almost sure convergence, but for various practical applications one needs to require uniform convergence. To
give an adequate description of the performance of wavelet approximations in both cases, for points where the processes are relatively smooth and points where spikes occur, we can use the uniform distance instead of global integral $L^p$ metrics.  A more in depth discussion, further references and various applications in econometrics, simulations of stochastic processes and functional data analysis can be found in \cite{del,egg,fan,kur,phi}. In his 2010 Szekeres Medal inauguration speech, an eminent leader in the field, Prof. P. Hall stated the development of uniform stochastic approximation methods as one of frontiers in modern functional data analysis.

Figures~1 and 2 illustrate some features of wavelet expansions of stochastic processes. Figure~1 presents a simulated realization of the Wiener process  and its wavelet reconstructions by two sums with different numbers of terms. The figure has been generated by the R package \textsc{wmtsa} \cite{wmtsa}. Besides providing a realization of the Wiener process and its wavelet reconstructions, we also plot corresponding reconstruction errors. Figure~2 shows maximum absolute reconstruction errors for 100 simulated realizations. To reconstruct each realization of the Wiener process two approximation sums (as in Figure~1) were used.  We clearly see that empirical probabilities of obtaining large reconstruction errors become smaller if the number of terms in the wavelet expansions increases. Although this effect is expected, it has to be established theoretically in a stringent way for different classes of stochastic processes and wavelet bases. It is also important to obtain theoretical estimations of the rate of convergence for such stochastic wavelet expansions.

\begin{center}
\begin{minipage}{4.8cm}
 {\psfig{figure=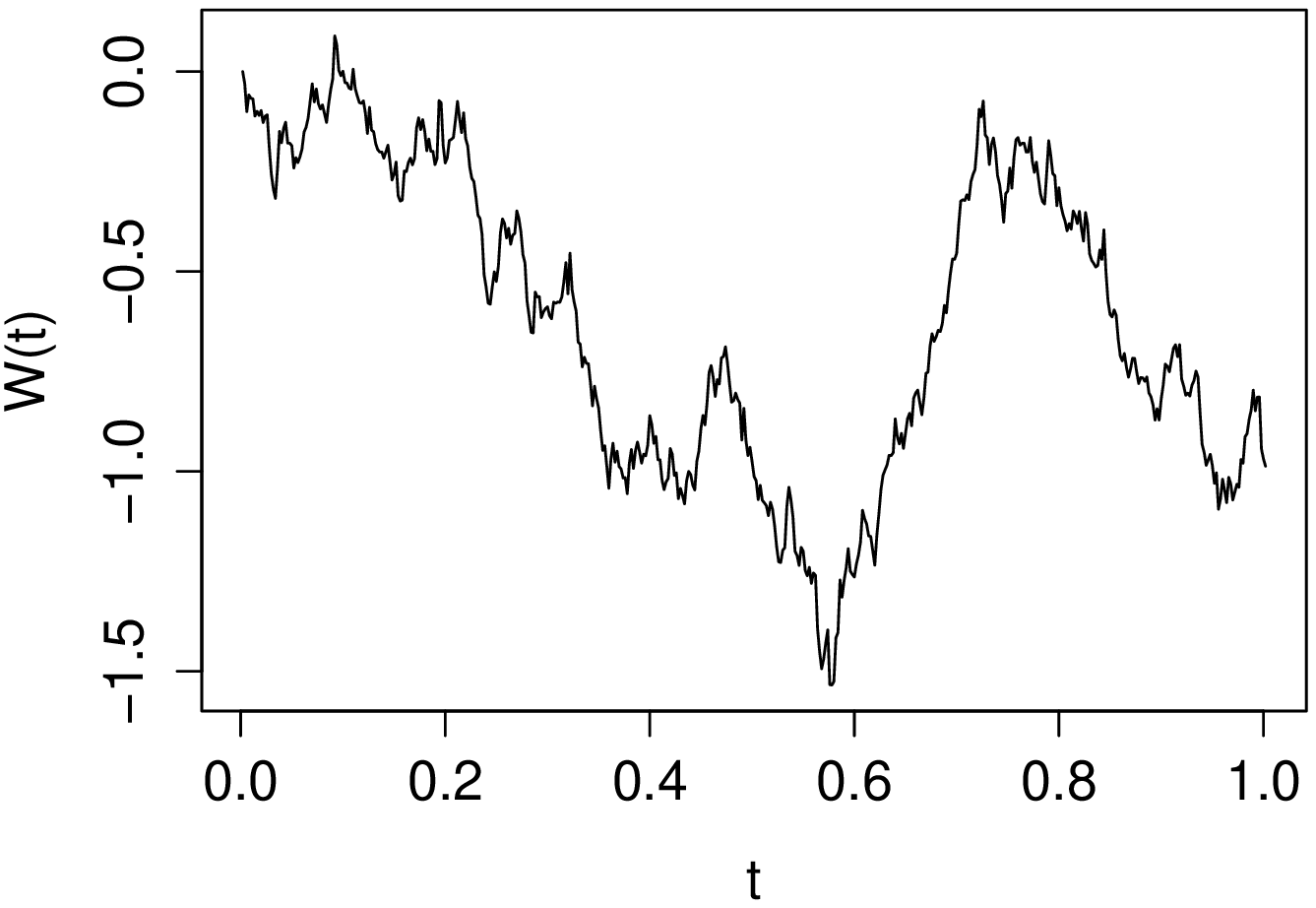 ,width=4.8cm}}  \end{minipage}
\begin{minipage}{4.8cm}
{\psfig{figure=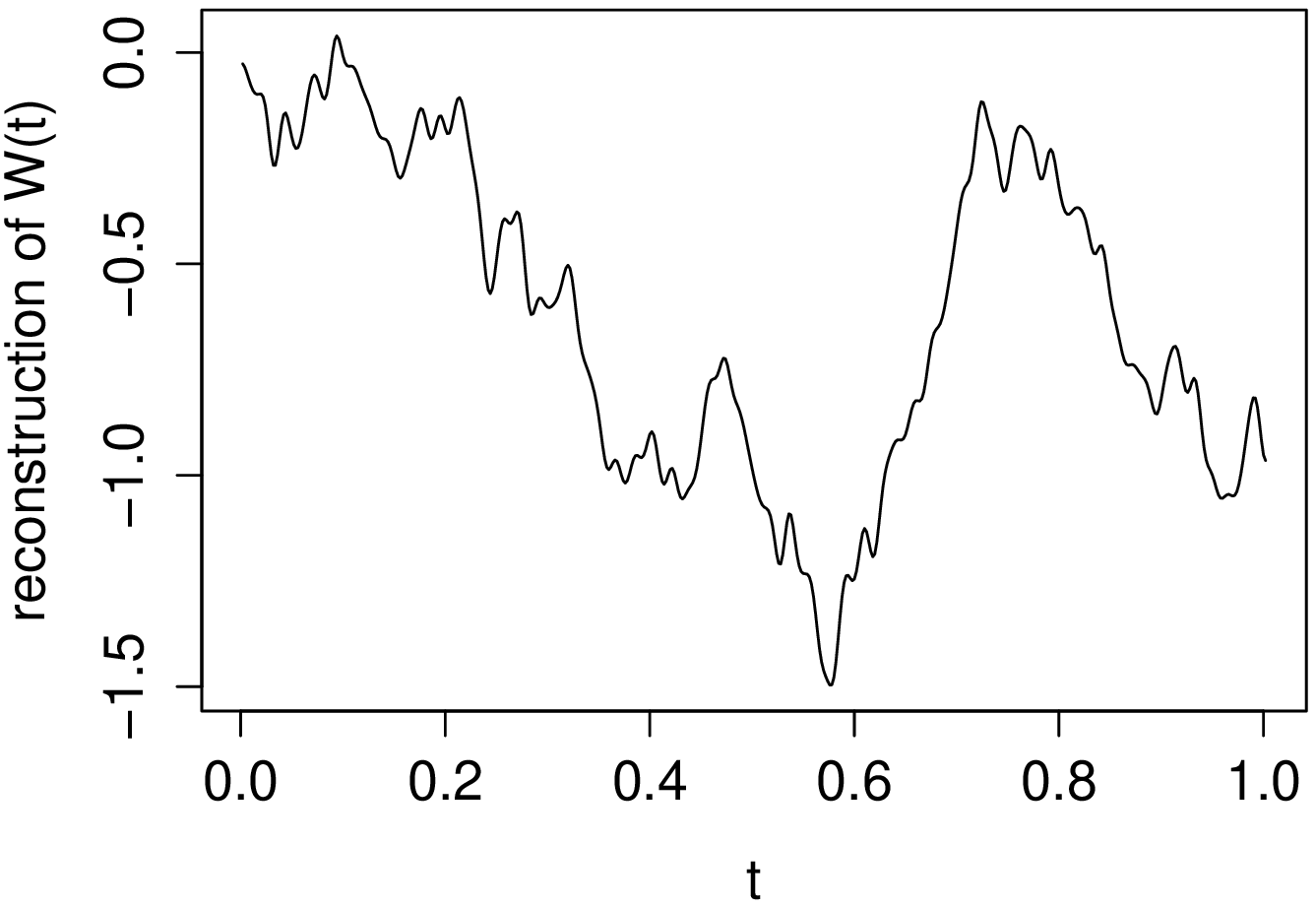,width=4.8cm}} \end{minipage}
\begin{minipage}{4.8cm}
{\psfig{figure=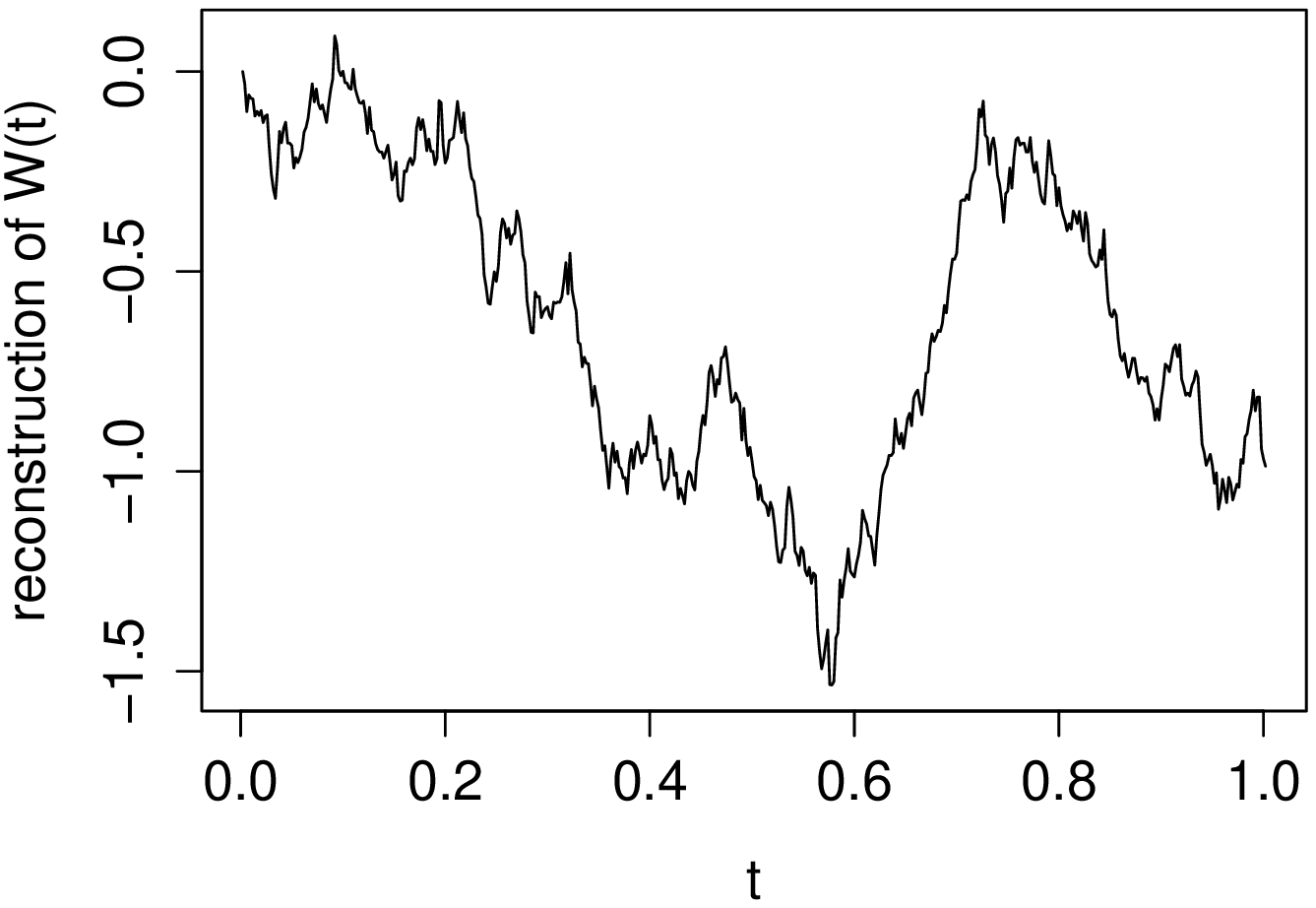,width=4.8cm}} \end{minipage}\\
{\textbf{Fig.\,1}\ \ \small Plots of  the Wiener process and its wavelet reconstructions}
\end{center}
\begin{center}
\begin{minipage}{5.5cm}
 {\psfig{figure=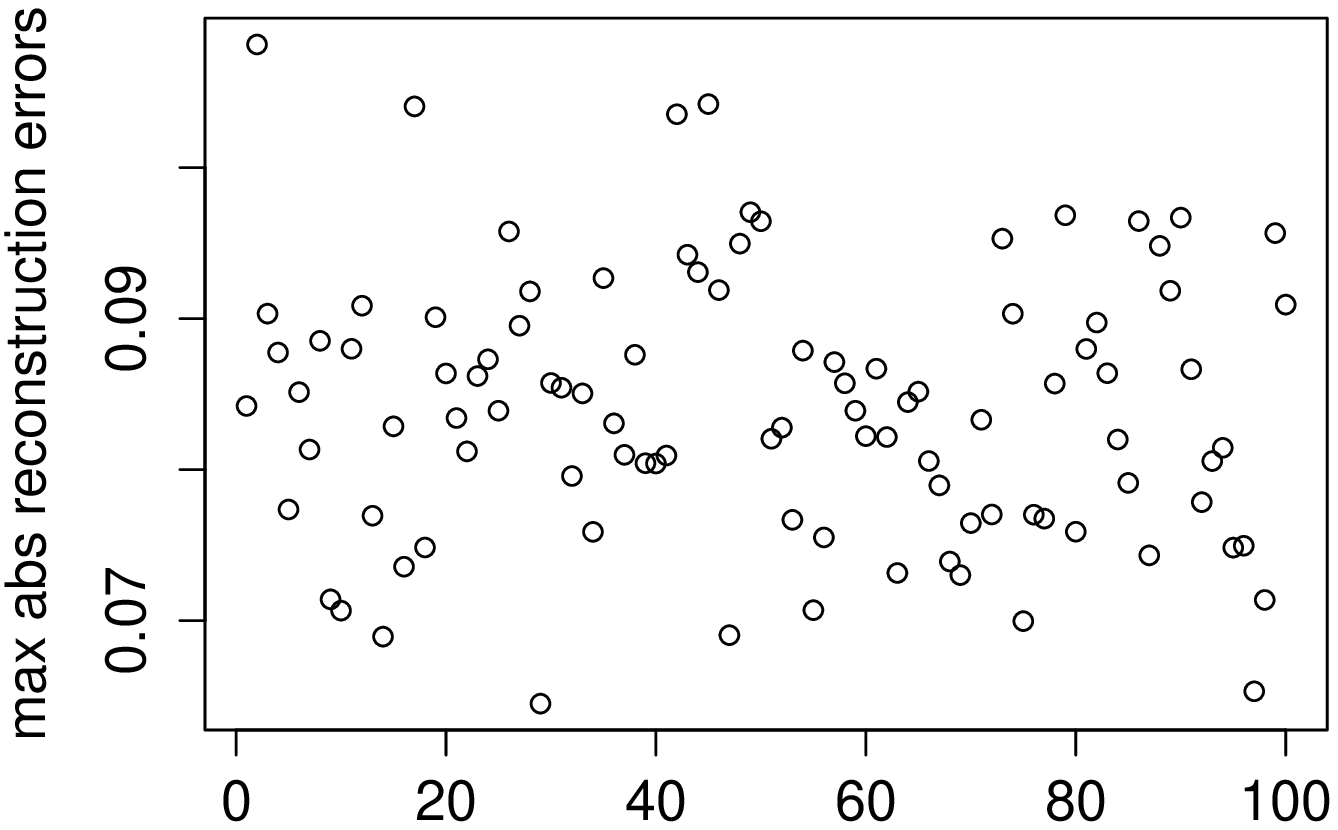 ,width=5.5cm}}  \end{minipage}\quad\quad
\begin{minipage}{5.5cm}
{\psfig{figure=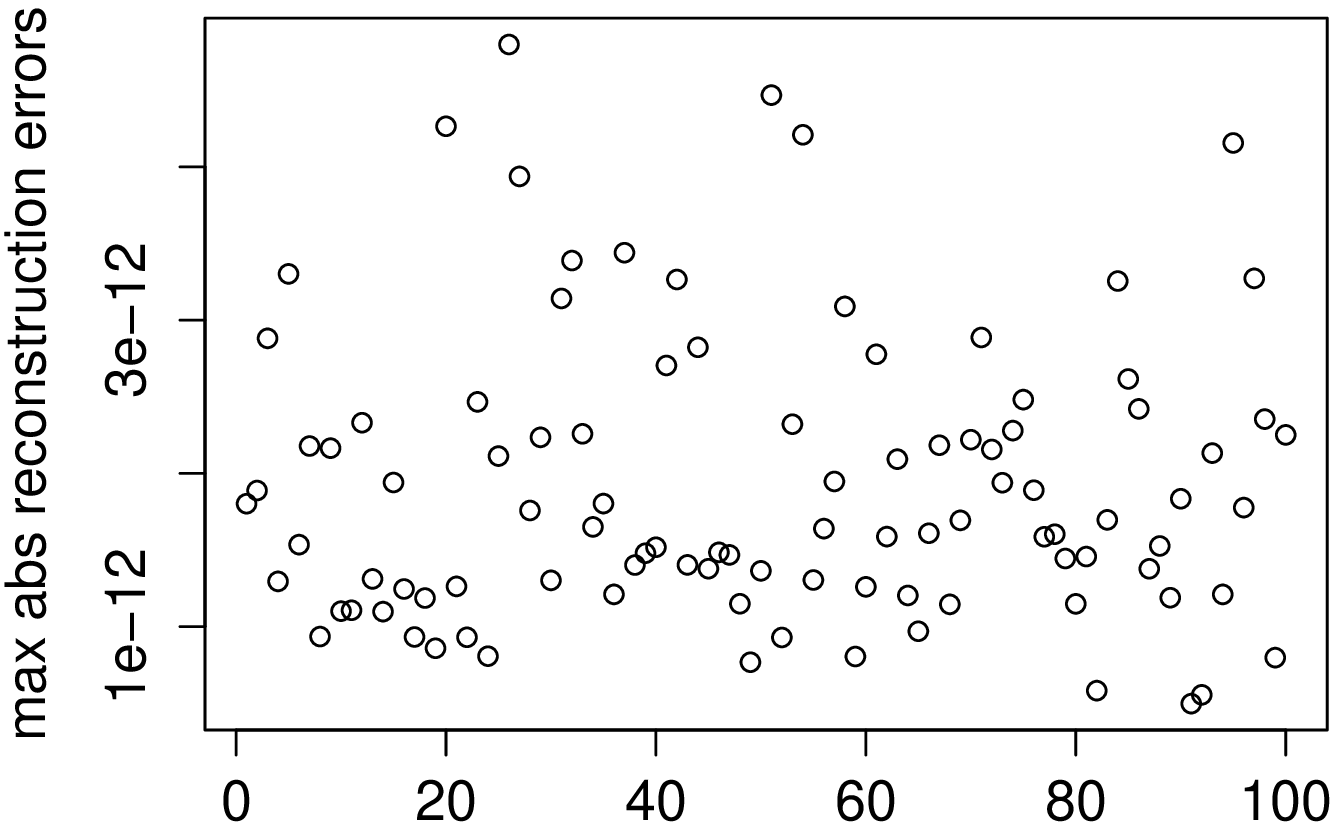,width=5.5cm}} \end{minipage}\\
{\textbf{Fig.\,2}\ \ \small Plots of reconstruction errors for 100 simulated realizations}
\end{center}

In this paper we make an attempt to derive general results on stochastic uniform convergence which are valid for wavelet expansions of wide classes of stochastic processes. The paper deals with the most general class of such wavelet expansions in comparison with particular cases considered by different authors, see, for example, \cite{aya,Cambanis,dzha,igl,kur}. Applications of the main theorem to special cases of practical importance (stationary processes, fractional Brownian motion, etc.) are demonstrated. We also prove the exponential rate of convergence of the wavelet expansions.

Throughout the paper, we impose minimal assumptions on the wavelet bases. The results are obtained under simple conditions which can be easily verified. The conditions  are weaker than those in the former literature. 

These are novel results on stochastic uniform convergence of general finite wavelet expansions of nonstationary random processes. The specifications of established results are also new (for example, for the case of stationary stochastic processes, compare  \cite{kozol1, kozol2}).

Finally, it should be mentioned that the analysis of the rate of convergence gives a constructive algorithm for determining the number of terms in the wavelet expansions to ensure the uniform approximation of stochastic processes with given accuracy. It provides a practical way to obtain explicit bounds on the sharpness of finite wavelet series approximations.

The organization of the article is the following. In the second section we introduce the necessary background from wavelet
theory and certain sufficient conditions for mean-square convergence of wavelet expansions in the space $L_2(\Omega).$
In \S 3 we formulate and discuss the main theorem on uniform convergence in probability of the wavelet expansions of Gaussian random processes. The next section contains the proof of the main theorem. Two applications of the developed technique are shown in section 4.  In \S 5 the main theorem is adjusted to the fractional Brownian motion case. Lastly, we  obtain the rate of convergence of the wavelet expansions in the space $C([0,T]).$

In what follows we use the symbol $C$ to denote constants which are not important for our discussion. Moreover, the same
symbol $C$ may be used for different constants appearing in the same proof.

\section{Wavelet representation of random processes}

Let $\phi(x),$ $x\in\mathbb R,$ be a function from the space
$L_2(\mathbb R)$ such that $\widehat{\phi}(0)\ne 0$ and
$\widehat{\phi}(y)$ is continuous at $0,$ where
$$\widehat{\phi}(y)=\int_{\mathbb
R}e^{-iyx}{\phi(x)}\,dx$$ is the Fourier transform of $\phi.$

 Suppose that the following assumption
holds true:
$$\sum_{k\in\mathbb Z} |\widehat{\phi}(y+2{\pi}k)|^2=1\  {\rm (a.e.)}
$$

There exists a function $m_0(x)\in L_2([0,2\pi])$, such that
$m_0(x)$ has the period $2\pi$ and
$$\widehat{\phi}(y)=m_0\left(y/2\right)\widehat{\phi}\left(y/2\right)\ {\rm (a.e.)}
$$
 In this case the  function $\phi(x)$ is called
the $f$-wavelet.

Let $\psi(x)$ be the inverse Fourier transform of the function
$$\widehat{\psi}(y)=\overline{m_0\left(\frac
y2+\pi\right)}\cdot\exp\left\{-i\frac
y2\right\}\cdot\widehat{\phi}\left(\frac y2\right).$$ Then the
function
$$\psi(x)=\frac{1}{2\pi}\int_{\mathbb
R}e^{iyx}{\widehat{\psi}(y)}\,dy$$ is called the $m$-wavelet.

Let
\begin{equation}\label{psi_jk}
\phi_{jk}(x)=2^{j/2}\phi(2^jx-k),\quad
\psi_{jk}(x)=2^{j/2}\psi(2^jx-k),\quad j,k \in\mathbb Z\,,
\end{equation}
where $\phi(x)$ and $\psi(x)$ are defined as above.

The conditions $\widehat{\phi}(0)\ne 0$ and
$\widehat{\phi}(y)$ is continuous at $0$ guarantee the completeness of  system (\ref{psi_jk}), see \cite{her}.
Then the family of functions $\{\phi_{0k};
\psi_{jk},\,j\in \mathbb N_0,\,k\in\mathbb Z\},$ $\mathbb N_0:=\{0,1,2,...\},$ is an orthonormal basis in
$L_2(\mathbb R)$ (see, for example, \cite{har}. A necessary and sufficient condition on $m_0$ is given in \cite{chu,dau}).

\begin{defn} Let $\delta(x),x\in\mathbb R,$ be a function from the space $L_2(\mathbb R).$
If the system of functions $\left\{\delta(x-k),\,k\in \mathbb Z \right\}$ is an
orthonormal system, then the function $\delta(x)$ is called a
scaling function.
\end{defn}

\begin{remk} $f$-wavelets and $m$-wavelets are scaling functions.
\end{remk}

\begin{defn}  Let $\delta(x),x\in\mathbb R,$ be a scaling function.
If there exists a bounded function $\Phi(x), \ x\ge0,$
such that $\Phi(x)$ is a decreasing function, $|\delta(x)|\le\Phi(|x|)$ a.e. on $\mathbb R,$
and $\int_{\mathbb R}
\left(\Phi(|x|)\right)^{\gamma}\,dx<\infty$ for some $\gamma>0,$  then $\delta(x)$ satisfies
assumption $S'(\gamma).$ 
\end{defn}

\begin{remk} If assumption $S'(\gamma)$ is satisfied for some
$\gamma>0$ then the assumption is also true for all
$\gamma_1>\gamma.$
\end{remk}

\begin{lem}\label{lem0}
If for some function $\Phi_\phi(\cdot)$ the $f$-wavelet $\phi$ satisfies assumption $S'(\gamma),$  then there exist a function $\Phi_\psi(\cdot)$ such that for the corresponding $m$-wavelet $\psi$ assumption $S'(\gamma)$ holds true and $\Phi_\psi(x)\ge \Phi_\phi(x),$ for all $x\in \mathbb R.$

\end{lem}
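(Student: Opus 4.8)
The plan is to rewrite $\psi$ in the time domain via the two-scale (refinement) relation, so that $\psi$ becomes a superposition of dilates and translates of $\phi$, and then to transfer the majorant $\Phi_\phi$ across the resulting convolution-type sum. Since $m_0$ is $2\pi$-periodic and lies in $L_2([0,2\pi])$, I would write its Fourier series $m_0(y)=\sum_{k\in\mathbb Z}c_k e^{-iky}$ and substitute it into $\widehat\psi(y)=\overline{m_0(y/2+\pi)}\,e^{-iy/2}\,\widehat\phi(y/2)$; since the Fourier transform of $\phi(2x-m)$ equals $\tfrac12 e^{-iym/2}\widehat\phi(y/2)$, inverting the Fourier transform gives $\psi(x)=2\sum_{k\in\mathbb Z}(-1)^k\overline{c_k}\,\phi(2x+k-1)$ almost everywhere, with the series converging in $L_2(\mathbb R)$. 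Treating the analogous identity $\widehat\phi(y)=m_0(y/2)\widehat\phi(y/2)$ the same way gives $\phi(x)=2\sum_k c_k\,\phi(2x-k)$, and taking the inner product of this with $\phi(2\cdot-j)$ and using the orthonormality of the translates of $\phi$ identifies the coefficients as $c_k=\int_{\mathbb R}\phi(x)\overline{\phi(2x-k)}\,dx$; in particular $\{c_k\}\in\ell_2$ and $|c_k|\le\int_{\mathbb R}\Phi_\phi(|x|)\,\Phi_\phi(|2x-k|)\,dx$.

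I would then extract decay of the coefficients by splitting this last integral at $|x|=|k|/4$: on $\{|x|\ge|k|/4\}$ the factor $\Phi_\phi(|x|)$ is at most $\Phi_\phi(|k|/4)$, and on $\{|x|<|k|/4\}$ one has $|2x-k|\ge|k|/2$ so that $\Phi_\phi(|2x-k|)\le\Phi_\phi(|k|/2)$; combined with the integrability of $\Phi_\phi$ (or of $\Phi_\phi^\gamma$) supplied by $S'(\gamma)$, this yields an estimate of the form $|c_k|\le C\,\Phi_\phi(|k|/4)$ for $k\ne0$, and $|c_k|\le C$ for all $k$, whence also $\sum_k|c_k|<\infty$ and $\sup_x\sum_k\Phi_\phi(|2x+k-1|)<\infty$ (by comparison of the sum with an integral). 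Substituting into $|\psi(x)|\le 2\sum_k|c_k|\,\Phi_\phi(|2x+k-1|)$ and splitting the sum according to whether $k$ is closer to $0$ or to $1-2x$: in the first group $|2x+k-1|\ge|x|/8$ for $|x|$ large, so this part is at most $(2\sum_k|c_k|)\,\Phi_\phi(|x|/8)$; in the second group $|k|\ge|x|/2$, so $|c_k|\le C\,\Phi_\phi(|x|/8)$ and this part is at most $C\,\Phi_\phi(|x|/8)\sum_k\Phi_\phi(|2x+k-1|)$. Both estimates give $|\psi(x)|\le C\,\Phi_\phi(|x|/8)$ for $|x|$ large; and since $\psi$ is bounded (the defining series is dominated by $2(\sum_k|c_k|)\,\Phi_\phi(0)$), enlarging $C$ so that $C\ge1$ makes this hold for every $x$.

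Finally I would put $\Phi_\psi(x):=C\,\Phi_\phi(x/8)$. This function is bounded (by $C\,\Phi_\phi(0)$), decreasing (a positive multiple of $\Phi_\phi$ composed with an increasing map), satisfies $\Phi_\psi(x)\ge\Phi_\phi(x/8)\ge\Phi_\phi(x)$ for all $x\ge0$, dominates $|\psi(\cdot)|$ almost everywhere, and obeys $\int_{\mathbb R}\big(\Phi_\psi(|x|)\big)^\gamma dx=8\,C^\gamma\int_{\mathbb R}\big(\Phi_\phi(|x|)\big)^\gamma dx<\infty$. Hence the $m$-wavelet $\psi$ satisfies $S'(\gamma)$ with a majorant $\Phi_\psi\ge\Phi_\phi$, which is the assertion of Lemma~\ref{lem0}.

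The step I expect to be the main obstacle is the convolution-type bookkeeping in the second paragraph: extracting genuine decay first for the coefficients $c_k$ and then for $\psi(x)$ out of the competition between the region where $\phi$ carries its mass ($x$ near $0$) and the region where the shifted copies $\phi(2\cdot+k-1)$ are centred ($k$ near $1-2x$), using only the information contained in $S'(\gamma)$; when $\gamma>1$, so that $\Phi_\phi$ need not be integrable, one has to carry appropriate powers of $\Phi_\phi$ through the estimates rather than appeal to $\Phi_\phi\in L_1$. A secondary and routine point is upgrading the $L_2$-convergent two-scale series to the pointwise almost-everywhere bound for $|\psi(x)|$, which is legitimate as soon as the dominating series $\sum_k|c_k|\,\Phi_\phi(|2x+k-1|)$ is shown to converge.
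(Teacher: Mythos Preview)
Your proposal is correct and follows essentially the same route as the paper's proof: both express $\psi$ via the two-scale relation, bound the refinement coefficients by $C\,\Phi_\phi(|k|/c)$ through a splitting of the defining integral, and then transfer this decay to $|\psi(x)|$ by a second splitting of the resulting convolution-type sum, arriving at a majorant of the form $\tilde C\,\Phi_\phi(|x|/c')$. Your final choice $\Phi_\psi(x)=C\,\Phi_\phi(x/8)$ is somewhat cleaner than the paper's piecewise definition, and your caveat about $\gamma>1$ is well placed---the paper's own argument tacitly uses $\int_{\mathbb R}\Phi_\phi(|u|)\,du<\infty$, which is only guaranteed when $\gamma\le1$ (the range actually used elsewhere in the paper).
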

\begin{proof}
 The wavelets $\phi$ and $\psi$ admit the following representations, see \cite{har},
\[
	\phi (x)=\sqrt{2} \sum \limits _{k\in \mathbb{Z}}h_{k} \phi \left(2x-k\right),
\]
\begin{equation}\label{eq_04}
	\psi(x)=\sqrt{2} \sum \limits _{k\in \mathbb{Z}}\lambda_{k} \phi \left(2x-k\right),
\end{equation}
where $h_{k}=\sqrt{2} \int_{R}\phi \left(u\right) \overline{\phi \left(2u-k\right)}du,$  $\sum \limits _{k\in \mathbb{Z}}\left|h_{k} \right|^{2} <\infty,$ and $\lambda _{k} =(-1)^{1-k} \overline{h_{1-k}}$.

If $k\geq 0,$ then
\[|h_{k}| \leq 	\sqrt{2} \int_{-\infty }^{\frac{k}{3}}\Phi_\phi \left(\left|u\right|\right)\Phi_\phi \left(\left|2u-k\right|\right)du +\sqrt{2} \int_{\frac{k}{3}}^{\infty }\Phi_\phi \left(u\right)\Phi_\phi \left(\left|2u-k\right|\right)du 
\]
\[\leq \sqrt{2}\Phi_\phi \left({k}/{3} \right) \int_{-\infty }^{\frac{k}{3} }\Phi_\phi \left(\left|u\right|\right)du +\sqrt{2}\Phi_\phi \left({k}/{3} \right) \int_{\frac{k}{3} }^{\infty }\Phi_\phi \left(\left|2u-k\right|\right)du 
\]
\[\leq \sqrt{2} \Phi \left({k}/{3} \right)\left(\int_{\mathbb{R}}\Phi_\phi \left(\left|u\right|\right)du +\int_{\mathbb{R}}\Phi_\phi \left(\left|2u-k\right|\right)du \right)=C\Phi_\phi \left({k}/{3} \right),
\]
where
\[C: =\frac{3}{\sqrt{2}} \int_{\mathbb{R}}\Phi_\phi \left(\left|u\right|\right)du.
\]

Similarly, for $k\leq 0$ we get $\left|h_{k} \right|\leq C\Phi_\phi \left({\left|k\right|}/{3} \right).$

Thus, for all $k\in \mathbb{Z},$
\begin{equation}\label{eq_05}
	\left|h_{k} \right|\le C \, \Phi_\phi \left({\left|k\right|}/{3} \right).
\end{equation}

Note that the series in the right-hand side of (\ref{eq_04}) converges in the $L_2(\mathbb R)$-norm. Therefore, there exists a subsequence of partial sums which converges to $\psi \left(x\right)$ a.e. on $\mathbb R.$ Thus, by (\ref{eq_04}) and (\ref{eq_05}) we obtain 
\[
	\left|\psi \left(x\right)\right|\leq \sqrt{2} C \sum \limits _{k\in \mathbb{Z}}\Phi_\phi \left(\left|2x-k\right|\right) \cdot \Phi_\phi \left(\frac{\left|1-k\right|}{3} \right)=
\]
\begin{equation}\label{eq_06}
	=\sqrt{2} C \sum \limits _{k\in \mathbb{Z}}\Phi_\phi \left(\left|2x-1-k\right|\right) \cdot \Phi_\phi \left({\left|k\right|}/{3} \right)=:I\left(2x-1\right)
\end{equation}
a.e.  on $\mathbb R.$

If $u:=2x-1,$ then for $u>0$
\[
	I(u)\leq \sqrt{2} C
	\left(\sum \limits_{\left|k\right|\leq \frac{3}{4} u}\Phi_\phi \left(\left|u-k\right|\right) \Phi_\phi \left({\left|k\right|}/{3} \right)\right.
\]
\[
	+\left.\sum \limits _{\left|k\right|\geq \frac{3}{4} u}\Phi_\phi \left(\left|u-k\right|\right) \cdot \Phi_\phi \left({\left|k\right|}/{3} \right)\right)
	=:\sqrt{2}C  \left(A_{1} \left(u\right)+A_{2} \left(u\right)\right).
\]

Notice also that
\[
		A_{1} (u)\leq \Phi_\phi \left({u}/{4} \right)\cdot \sum \limits _{\left|k\right|\leq \frac{3}{4} u}\Phi_\phi \left({\left|k\right|}/{3} \right) \leq \Phi_\phi \left({u}/{4} \right)\cdot \sum \limits _{k\in \mathbb{Z}}\Phi_\phi \left({\left|k\right|}/{3} \right),\]
\[
		A_{2} (u)\leq \Phi_\phi \left({u}/{4} \right)\cdot \sum \limits _{\left|k\right|\geq \frac{3}{4} u}\Phi_\phi \left(\left|u-k\right|\right) \leq \Phi \left({u}/{4} \right)\cdot \sum \limits _{k\in \mathbb{Z}}\Phi_\phi \left(\left|u-k\right|\right).
\]

Therefore, for $u>0$
\begin{equation}\label{eq_09}
	I(u)\leq \sqrt{2} C \Phi_\phi \left({u}/{4} \right) \left(\sum \limits _{k\in \mathbb{Z}}\Phi_\phi \left(\left|u-k\right|\right) +\sum \limits _{k\in\mathbb{Z}}\Phi_\phi \left({\left|k\right|}/{3} \right)\right).
\end{equation}

We are to prove that, $\sum \limits _{k\in \mathbb{Z}}\Phi_\phi \left(\left|u-k\right|\right) +\sum \limits _{k\in \mathbb{Z}}\Phi_\phi \left({\left|k\right|}/{3} \right)$ is bounded.

Note that
\[
	\sum \limits _{k\geq u+1}\Phi_\phi \left(k-u\right)
	\leq \sum \limits _{k\geq u+1}\int_{k-1}^{k}\Phi_\phi \left(v-u\right) dv\leq  \int_{0}^{\infty }\Phi_\phi \left(v\right)dv <\infty,
\]
and, similarly,
\[
	\sum \limits _{k\leq u-1}\Phi_\phi \left(u-k\right) \leq \int_{0}^{\infty }\Phi_\phi \left(v\right)dv.
\]
Thus,
\[
	\sum \limits _{k\in \mathbb{Z}}\Phi_\phi \left(\left|u-k\right|\right) \leq 2\left(\Phi_\phi \left(0\right)+\int_{0}^{\infty }\Phi_\phi \left(v\right)dv \right)=:D_{1}.
\]
Since
\[
	\sum \limits _{k=1}^{\infty }\Phi_\phi \left({k}/{3} \right) \leq \sum \limits _{k=1}^{\infty }\int_{k-1}^{k}\Phi_\phi \left({v}/{3} \right)dv =
3\int _{0}^{\infty }\Phi_\phi (u)du,
\]
it follows that
\[
	\sum \limits _{k=1}^{\infty }\Phi_\phi \left({\left|k\right|}/{3} \right)\leq \Phi_\phi \left(0\right)+6\int_{0}^{\infty }\Phi_\phi \left(u\right)du =:D_{2}\,.
\]

Thus, we conclude that for $u>0$
\begin{equation}\label{eq_12}
	I(u)\leq \sqrt{2}C \left(D_{1} +D_{2} \right) \Phi_\phi \left({u}/{4} \right).
\end{equation}

Since for $u<0$
$$\Phi_\phi \left(\left|u+k\right|\right)=\Phi_\phi \left(\left|-u-k\right|\right)=\Phi_\phi \left(\left|\left|u\right|-k\right|\right),$$
it follows that
\begin{equation}\label{eq_13}
	I(u)\leq  \sqrt{2} C \left(D_{1} +D_{2}\right)\Phi_\phi \left({|u|}/{4} \right)
\end{equation}
for every $u\in \mathbb R.$

By (\ref{eq_06}), (\ref{eq_09}),  (\ref{eq_12}),  and (\ref{eq_13}), 
\begin{equation}\label{eqF}
	\left|\psi (x)\right|\leq \tilde{C}\, \Phi_\phi \left(\left|\frac{2x-1}{4} \right|\right) \ \mbox{a.e.  on}\ \mathbb R.
\end{equation}

The desired result  follows from (\ref{eqF}) if we chose
$$\Phi_\psi(x):=	\begin{cases}\max(1,\tilde{C}), & x \in (0,1/2),\\
\max(1,\tilde{C})\cdot\max\left(\Phi_\phi(x),\Phi_\phi \left(\left|\frac{2x-1}{4} \right|\right)\right), & x \not\in (0,1/2).
\end{cases}$$
\end{proof}

Motivated by Lemma~\ref{lem0}, we will use the following assumption instead of two separate assumptions $S'(\gamma)$ for the $f$-wavelet
$\phi$ and the $m$-wavelet $\psi.$\vspace{2mm}

{\bf Assumption $S(\gamma).$} {\it For some function $\Phi(\cdot)$ and $\gamma>0$ both the father and
mother wavelets satisfy assumption $S'(\gamma).$}\vspace{2mm}

Let $\{\Omega, \cal{B}, P\}$ be a standard probability space. Let
$\mathbf{X}(t),$ $t\in\mathbb R,$ be a  random process such that
$\mathbf E\mathbf{X}(t)=0, \ \mathbf E|\mathbf{X}(t)|^{2}<\infty.$

If sample trajectories of this process are  in the space
$L_2(\mathbb R)$ with probability one, then it is possible to obtain the
representation (wavelet representation)
\begin{equation}\label{X(t)}\mathbf{X}(t)=\sum_{k\in \mathbb Z}\xi_{0k}\phi_{0k}(t)+\sum_{j=0}^{\infty}\sum_{k\in \mathbb Z}
\eta_{jk}\psi_{jk}(t)\,, \end{equation}
where
\begin{equation}\label{intksieta}\xi_{0k}=\int_{\mathbb R}\mathbf{X}(t)\overline{\phi_{0k}(t)}\,dt,\quad
\eta_{jk}=\int_{\mathbb
R}\mathbf{X}(t)\overline{\psi_{jk}(t)}\,dt\,.\end{equation}

The majority of random processes does not possess the required property. For
example, sample paths of stationary processes are not in the space
$L_2(\mathbb R)$ (a.s.). However, in many cases it is possible to construct a representation of type~(\ref{X(t)}) for $\mathbf{X}(t).$

Consider the approximants of $\mathbf{X}(t)$ defined by
\begin{equation}\label{x_nk}
\mathbf{X}_{n,\mathbf{k}_n}(t)=\sum_{|k|\le
k_0'}\xi_{0k}\phi_{0k}(t)+\sum_{j=0}^{n-1}\sum_{|k|\le
k_j}\eta_{jk}\psi_{jk}(t)\,,
\end{equation}
where $\mathbf{k}_n:=(k_0',k_0,...,k_{n-1}).$

Theorem~\ref{8103} below guarantees the mean-square convergence of
$\mathbf{X}_{n,\mathbf{k}_n}(t)$  to $\mathbf{X}(t)$ if
$k_0'\to\infty,$ $k_j\to\infty,$ $j\in \mathbb N_0,$ and $n\to\infty.$
The latter means that we increase the number $n$ of multiresolution analysis subspaces which are used to approximate  $\mathbf{X}(t).$ For each multiresolution analysis subspace  $j=0',0,1,2...$ the number $k_j$ of its basis vectors,  which are used in the approximation,   increases too, as $n$ tends to infinity. Thus, for each fixed $k$ and $j$ there is $n_0\in \mathbb N_0$ that the terms $\xi_{0k}\phi_{0k}(t)$ and $\eta_{jk}\psi_{jk}(t)$ are included in all $\mathbf{X}_{n,\mathbf{k}_n}(t)$ for $n\ge n_0$ (i.e., each $\xi_{0k}\phi_{0k}(t)$ and $\eta_{jk}\psi_{jk}(t)$ can be absent only in the finite number of $\mathbf{X}_{n,\mathbf{k}_n}(t)$).

\begin{thm}\label{8103} {\rm\cite{kozpol}} Let $\mathbf{X}(t),$ $t\in\mathbb R,$ be a random process
such that $\mathbf E\mathbf{X}(t)=0,$ $\mathbf
E|\mathbf{X}(t)|^2<\infty$ for all $t\in\mathbb R,$ and its
covariance function  $R(t,s):=\mathbf E\mathbf{X}(t)\overline{\mathbf{X}(s)}$  is continuous. Let the $f$-wavelet
$\phi$ and the $m$-wavelet $\psi$ be continuous functions which satisfy
assumption $S(1).$ Let $c(x),$ $x\in\mathbb R,$ denote a non decreasing on $[0,\infty)$
even function with $c(0)>0.$
Suppose that there exists a function $A:(0,\infty)\to (0,\infty),$ 
and $a_0,\,x_0\in (0,\infty)$ such that  $ c(ax)\le c(x)\cdot A(a),$ for all $a\ge a_0,$
$x\ge x_0.$\vspace{1mm}

If
$$\int_{\mathbb R}c(x)\Phi(|x|)\,dx<\infty\quad {\rm and} \quad
|R(t,t)|^{1/2}\le c(t),$$ then

\begin{enumerate}
\item[\rm 1.]  $\mathbf{X}_{n,\mathbf{k}_n}(t)\in L_2(\Omega)\,;$
\item[\rm 2.]  $\mathbf{X}_{n,\mathbf{k}_n}(t)\to \mathbf{X}(t)$ in mean square when  $n\to\infty,$ $k_0'\to\infty,$ and $k_j\to\infty$ for all
$j\in \mathbb N_0\,.$
\end{enumerate}
\end{thm}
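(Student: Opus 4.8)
My plan is to prove the first assertion by checking that every coefficient $\xi_{0k},\eta_{jk}$ is a well-defined element of $L_2(\Omega)$, and the second by recognising $\mathbf X_{n,\mathbf k_n}(t)$ as $\mathbf X$ integrated against the truncated reproducing kernels of the multiresolution subspaces and running a weighted approximate-identity argument. For the first assertion I would use that, since $R$ is continuous, $s\mapsto\mathbf X(s)$ is continuous in $L_2(\Omega)$, so the mean-square integral $\int_{\mathbb R}\mathbf X(s)\overline{\psi_{jk}(s)}\,ds$ exists in $L_2(\Omega)$ provided $\int_{\mathbb R}|\psi_{jk}(s)|\,\|\mathbf X(s)\|_{L_2(\Omega)}\,ds<\infty$. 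Bounding $\|\mathbf X(s)\|_{L_2(\Omega)}=R(s,s)^{1/2}\le c(s)$ and $|\psi_{jk}(s)|\le 2^{j/2}\Phi(|2^js-k|)$ (assumption $S(1)$), the change of variables $v=2^js-k$ turns this into a multiple of $\int_{\mathbb R}\Phi(|v|)\,c\big(2^{-j}(v+k)\big)\,dv$; I would split at $|v|=|k|$, estimate $c\big(2^{-j}(v+k)\big)$ on $\{|v|\ge|k|\}$ by $c(2|v|)\le c(v)\,A\big(\max(a_0,2)\big)$ (monotonicity of $c$ plus the hypothesis $c(ax)\le c(x)A(a)$) and on $\{|v|<|k|\}$ by the constant $c(2|k|)$, and conclude finiteness from $\int_{\mathbb R}c(x)\Phi(|x|)\,dx<\infty$ together with the boundedness of $\Phi$ and $c$ on bounded sets. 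The same works for $\phi_{0k}$, and since $\mathbf X_{n,\mathbf k_n}(t)$ is a finite linear combination of such variables with bounded deterministic coefficients, it lies in $L_2(\Omega)$.

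For the second assertion I would first interchange the finite sums with the integrals to write $\mathbf X_{n,\mathbf k_n}(t)=\int_{\mathbb R}\mathbf X(s)K_{n,\mathbf k_n}(t,s)\,ds$, where $K_{n,\mathbf k_n}(t,s):=\sum_{|k|\le k_0'}\phi_{0k}(t)\overline{\phi_{0k}(s)}+\sum_{j=0}^{n-1}\sum_{|k|\le k_j}\psi_{jk}(t)\overline{\psi_{jk}(s)}$, and then split $\mathbf X(t)-\mathbf X_{n,\mathbf k_n}(t)=\big(\mathbf X(t)-\mathbf X_n(t)\big)+\big(\mathbf X_n(t)-\mathbf X_{n,\mathbf k_n}(t)\big)$ with $\mathbf X_n(t):=\int_{\mathbb R}\mathbf X(s)q_n(t,s)\,ds$, where $q_n(t,s):=\sum_{k\in\mathbb Z}\phi_{nk}(t)\overline{\phi_{nk}(s)}=2^nq_0(2^nt,2^ns)$, $q_0(x,w):=\sum_{k\in\mathbb Z}\phi(x-k)\overline{\phi(w-k)}$, is the reproducing kernel of the span of $\{\phi_{nk}:k\in\mathbb Z\}$; a variant of the estimate from the first part also shows $\mathbf X_n(t)=\sum_k\xi_{0k}\phi_{0k}(t)+\sum_{j=0}^{n-1}\sum_k\eta_{jk}\psi_{jk}(t)$ with convergence in $L_2(\Omega)$. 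For the \emph{resolution error} I would expand $\mathbf E|\mathbf X(t)-\mathbf X_n(t)|^2=R(t,t)-2\,\mathrm{Re}\int_{\mathbb R}\overline{q_n(t,u)}R(t,u)\,du+\int_{\mathbb R}\!\int_{\mathbb R}q_n(t,s)\overline{q_n(t,u)}R(s,u)\,ds\,du$. Exactly as in the proof of Lemma~\ref{lem0}, $|\phi|\le\Phi$ yields $|q_0(x,w)|\le C\,\Phi(|x-w|/2)$ and hence $|q_n(t,s)|\le C\,2^n\Phi\big(2^{n-1}|t-s|\big)$, while $\int_{\mathbb R}q_n(t,s)\,ds$ equals the constant $|\widehat\phi(0)|^2=1$ built into the orthonormality hypothesis; so $q_n(t,\cdot)$ is a weighted approximate identity concentrating at $t$. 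Rescaling, using continuity of $R$ for pointwise limits and $|R(s,u)|\le c(s)c(u)$ with $c(ax)\le c(x)A(a)$ to build a fixed integrable majorant, dominated convergence gives $\int_{\mathbb R} q_n(t,u)R(t,u)\,du\to R(t,t)$ and $\int_{\mathbb R}\!\int_{\mathbb R} q_n(t,s)\overline{q_n(t,u)}R(s,u)\,ds\,du\to R(t,t)$, whence $\mathbf E|\mathbf X(t)-\mathbf X_n(t)|^2\to R(t,t)-2R(t,t)+R(t,t)=0$.

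For the \emph{truncation error} I would write $\mathbf X_n(t)-\mathbf X_{n,\mathbf k_n}(t)=\sum_{|k|>k_0'}\xi_{0k}\phi_{0k}(t)+\sum_{j=0}^{n-1}\sum_{|k|>k_j}\eta_{jk}\psi_{jk}(t)$ and bound each block by the Cauchy--Schwarz inequality applied to the double integral for its second moment: $\big\|\sum_{|k|>k_j}\eta_{jk}\psi_{jk}(t)\big\|_{L_2(\Omega)}\le\sum_{|k|>k_j}|\psi_{jk}(t)|\int_{\mathbb R}|\psi_{jk}(s)|c(s)\,ds$, which is the tail of the convergent series $\sum_k|\psi_{jk}(t)|\int_{\mathbb R}|\psi_{jk}(s)|c(s)\,ds$ (finite by the estimates above) and so tends to $0$ as $k_j\to\infty$; similarly for the $\phi_{0k}$-block. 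To handle the joint limit ``$n\to\infty$, $k_0'\to\infty$, $k_j\to\infty$ for all $j$'' I would argue with a diagonal choice of thresholds: given $\varepsilon>0$, fix $N$ so the resolution error is $<\varepsilon/2$ for $n\ge N$, then choose thresholds $M_{0'},M_0,M_1,\dots$ so that the $j$-th block has $L_2(\Omega)$-norm $<2^{-j-2}\varepsilon$ whenever $k_j\ge M_j$; summing the blocks then gives $\|\mathbf X(t)-\mathbf X_{n,\mathbf k_n}(t)\|_{L_2(\Omega)}<\varepsilon$.

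The hard part will be the resolution-error step. Because sample trajectories are not in $L_2(\mathbb R)$, the function $R(t,\cdot)$ is in general not in $L_2(\mathbb R)$, so one cannot simply invoke the $L_2(\mathbb R)$-convergence of wavelet expansions; the argument must be a genuine weighted-approximate-identity estimate, and the real obstacle is producing an $n$-independent integrable majorant for the integrands in $\int_{\mathbb R} q_n(t,u)R(t,u)\,du$ and $\int_{\mathbb R}\!\int_{\mathbb R} q_n(t,s)\overline{q_n(t,u)}R(s,u)\,ds\,du$. This is precisely where the near-multiplicativity assumption $c(ax)\le c(x)A(a)$ and the integrability $\int_{\mathbb R}c(x)\Phi(|x|)\,dx<\infty$ enter, the decay bound $|q_n(t,s)|\le C\,2^n\Phi\big(2^{n-1}|t-s|\big)$ (derived just as in Lemma~\ref{lem0}) being what makes the rescaling effective.
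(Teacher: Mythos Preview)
The paper does not actually contain a proof of this theorem: it is stated with an explicit citation to \cite{kozpol} and is used throughout as a black box to guarantee mean-square convergence (see the proofs of Theorems~\ref{mainuniform} and \ref{th4}). There is therefore no in-paper argument to compare your proposal against.

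That said, your outline follows the natural strategy one would expect in the cited source: establish that the mean-square (Bochner) integrals defining $\xi_{0k},\eta_{jk}$ exist by controlling $\int|\psi_{jk}(s)|\,R(s,s)^{1/2}\,ds$ via the envelope $\Phi$ and the growth bound on $c$, then treat the error $\mathbf X-\mathbf X_{n,\mathbf k_n}$ by separating a resolution piece (governed by the projection kernel $q_n$) from a truncation piece (tails in $k$). One point to be careful about in the resolution step: your claim that $\int_{\mathbb R}q_n(t,s)\,ds=|\widehat\phi(0)|^2=1$ is not a direct consequence of the orthonormality identity $\sum_k|\widehat\phi(y+2\pi k)|^2=1$ alone; at $y=0$ that identity only gives $\sum_k|\widehat\phi(2\pi k)|^2=1$, and to conclude $|\widehat\phi(0)|=1$ (equivalently $\sum_k\phi(\cdot-k)\equiv\widehat\phi(0)$) you also need $\widehat\phi(2\pi k)=0$ for $k\neq 0$, which is part of the standard MRA/partition-of-unity package but should be invoked explicitly. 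Apart from this, the dominated-convergence majorant you describe, built from $|q_n(t,s)|\le C\,2^n\Phi(2^{n-1}|t-s|)$ together with $|R(s,u)|\le c(s)c(u)$ and the near-multiplicativity $c(ax)\le c(x)A(a)$, is exactly the mechanism the hypotheses are designed to support, and your handling of the truncation tails is straightforward.
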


\section{Uniform convergence  of wavelet expansions
for Gaussian random processes}
In this section we show that, under suitable conditions, the sequence $\mathbf{X}_{n,\mathbf{k}_n}(t)$ converges in probability in Banach space $C([0,T])$, $T>0,$ i.e.
$$P\left\{\sup_{0\le t\le T} |\mathbf{X}(t)-\mathbf{X}_{n,\mathbf{k}_n}(t)|>\varepsilon \right\}\to 0,
$$
when  $n\to\infty,$ $k_0'\to\infty,$ and $k_j\to\infty$ for all $j\in \mathbb{N}_0:=\{0,1,...\}\,.$ More details on the general theory of random processes in the space $C([0,T])$ can be found in \cite{bulkoz}.

\begin{thm}\label{213}{\rm \cite{kozsli}}  Let $\mathbf{X}_n(t),$ $t\in[0,T],$
 be a sequence of Gaussian stochastic processes such that
all  $\mathbf{X}_n(t)$ are separable in $([0,T], \rho),$ $\rho(t,s)=|t-s|,$ and
$$\sup_{n\ge 1}\sup_{|t-s|\le h}\left( \mathbf E|\mathbf{X}_n(t)-\mathbf{X}_n(s)|^2\right)^{1/2}\le \sigma(h)\,,$$
where $\sigma(h)$ is a function, which is monotone increasing in a neighborhood of the origin and
$\sigma(h)\to 0$ when $h\to 0\,.$

Assume that for some $\varepsilon>0$
\begin{equation}\label{142}\int_0^\varepsilon
\sqrt{-\ln\left(\sigma^{(-1)}(u)\right)}\,du<\infty\,,\end{equation}
where $\sigma^{(-1)}(u)$ is the inverse function of $\sigma(u)$.
 If the random variables $\mathbf{X}_n(t)$ converge in probability to the random variable
 $\mathbf{X}(t)$ for all $t\in [0,T]$, then $\mathbf{X}_n(t)$ converges  to  $\mathbf{X}(t)$ in the space
$C([0,T]).$
\end{thm}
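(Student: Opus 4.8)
The plan is to derive from the entropy condition \eqref{142} a single modulus-of-continuity function $\theta(h)\downarrow 0$ that controls the oscillations of \emph{all} the processes $\mathbf X_n$ simultaneously, and then to upgrade the pointwise convergence in probability to uniform convergence by a standard finite-net argument.

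First I would enlarge the family by the limit: set $\mathbf X_0:=\mathbf X$. Since $\mathbf X_n(t)-\mathbf X_n(s)$ is Gaussian with $\mathbf E|\mathbf X_n(t)-\mathbf X_n(s)|^2\le\sigma^2(|t-s|)$ and $\mathbf X_n(\cdot)\to\mathbf X(\cdot)$ in probability pointwise, the finite-dimensional distributions of $\mathbf X$ are Gaussian and, by Fatou's lemma along an a.s.\ convergent subsequence, $\mathbf E|\mathbf X(t)-\mathbf X(s)|^2\le\sigma^2(|t-s|)$; hence every $\mathbf X_m$, $m\ge0$, satisfies the hypotheses with the same $\sigma$. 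Each canonical pseudometric $d_m(t,s):=\bigl(\mathbf E|\mathbf X_m(t)-\mathbf X_m(s)|^2\bigr)^{1/2}$ is then dominated by $\sigma(|t-s|)$, so a Euclidean $\sigma^{(-1)}(u)$-net of $[0,T]$ is a $u$-net for $d_m$, whence the metric entropy obeys $\log N(u,d_m,[0,T])\le\log(2T)-\ln\sigma^{(-1)}(u)$ for all small $u$, \emph{uniformly in $m$}. Feeding this into Dudley's chaining bound for separable Gaussian processes (equivalently, the Buldygin--Kozachenko continuity estimate for sub-Gaussian processes, for which \eqref{142} is precisely the relevant hypothesis; see \cite{bulkoz}) I would obtain, for all $m\ge0$,
\[
\mathbf E\,\omega_m(h)\ \le\ C\int_0^{\sigma(h)}\sqrt{\log N(u,d_m,[0,T])}\,du\ \le\ \theta(h),
\]
where $\omega_m(h):=\sup_{|t-s|\le h}|\mathbf X_m(t)-\mathbf X_m(s)|$ and, using $\sqrt{a+b}\le\sqrt a+\sqrt b$,
\[
\theta(h):=C\sqrt{\log(2T)}\,\sigma(h)+C\int_0^{\sigma(h)}\sqrt{-\ln\sigma^{(-1)}(u)}\,du .
\]
By \eqref{142} this $\theta(h)$ is finite, does not depend on $m$, and $\theta(h)\to0$ as $h\to0$. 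In particular $\mathbf X$ has a continuous modification, which I fix, and Markov's inequality yields the uniform stochastic equicontinuity
\[
\sup_{m\ge0}P\bigl\{\omega_m(h)>\delta\bigr\}\le\theta(h)/\delta,\qquad\delta>0,
\]
whose right-hand side tends to $0$ as $h\to0$.

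Given this, the remainder is routine. Fix $\delta,\alpha>0$, choose $h>0$ with $\sup_{m\ge0}P\{\omega_m(h)>\delta/3\}<\alpha/3$, and let $t_1,\dots,t_M$ be a finite $h$-net of $[0,T]$. As $M$ is finite and $\mathbf X_n(t_i)\to\mathbf X(t_i)$ in probability, there is $N$ with $P\{\max_{1\le i\le M}|\mathbf X_n(t_i)-\mathbf X(t_i)|>\delta/3\}<\alpha/3$ for $n\ge N$. For any $t\in[0,T]$ I pick $t_i$ with $|t-t_i|\le h$ and split
\[
|\mathbf X_n(t)-\mathbf X(t)|\le|\mathbf X_n(t)-\mathbf X_n(t_i)|+|\mathbf X_n(t_i)-\mathbf X(t_i)|+|\mathbf X(t_i)-\mathbf X(t)|\le\omega_n(h)+\max_i|\mathbf X_n(t_i)-\mathbf X(t_i)|+\omega_0(h);
\]
taking the supremum over $t$ and using subadditivity of $P$ gives $P\{\sup_{0\le t\le T}|\mathbf X_n(t)-\mathbf X(t)|>\delta\}<\alpha$ for all $n\ge N$. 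Since $\delta,\alpha$ were arbitrary, $\|\mathbf X_n-\mathbf X\|_{C([0,T])}\to0$ in probability, i.e.\ $\mathbf X_n\to\mathbf X$ in $C([0,T])$.

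The hard part is the first step: producing $\theta(h)$ so that the oscillation bound holds uniformly in $n$. Everything rests on the observation that the Dudley/Kozachenko entropy integral for each $\mathbf X_m$ is controlled purely through the common majorant $\sigma$ (via $N(u,d_m,[0,T])\le 2T/\sigma^{(-1)}(u)$), so \eqref{142} simultaneously makes it finite and renders the resulting bound independent of $m$; the Gaussian chaining inequality together with the elementary $\sqrt{a+b}\le\sqrt a+\sqrt b$ then deliver $\theta$. Once uniform stochastic equicontinuity is available, the finite-net passage is completely standard, and in fact one could instead invoke directly a general metric-entropy criterion for convergence of Gaussian sequences in $C([0,T])$.
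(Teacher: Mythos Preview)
The paper does not prove this theorem at all: it is simply quoted as a known result from \cite{kozsli} (Kozachenko--Slivka), with no argument provided. So there is no ``paper's own proof'' to compare your proposal against.

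Your proposal is a correct and standard route to the conclusion. The key mechanism---that the common majorant $\sigma$ controls every canonical metric $d_m$, hence the Dudley entropy integral is bounded uniformly in $m$ by the quantity in \eqref{142}---is exactly the reason a result of this type holds, and your finite-net upgrade from pointwise to uniform convergence via the uniform stochastic equicontinuity is the natural second step. Two small cosmetic points: the bound $\log N(u,d_m,[0,T])\le\log(2T)-\ln\sigma^{(-1)}(u)$ should be read with $\log(2T)$ replaced by $\max(0,\log(2T))$ when $T<1/2$ (or simply absorb this into the constant $C$), and you are implicitly using that separability in the Euclidean metric implies separability in each coarser pseudometric $d_m$, which is immediate since $d_m(t,s)\le\sigma(|t-s|)$. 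Neither affects the validity of the argument.
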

\begin{remk}\label{772}For example, it is easy to check that assumption~(\ref{142}) holds true for
$$\sigma(h)=\frac C{\ln^{\beta}\left(e^{\alpha}+
\frac1{h}\right)}\quad \mbox {and} \quad \sigma(h)=C
h^{\kappa},$$  when $C>0,$
$\beta>1/2,$ $\alpha>0,$ and $\kappa>0.$
\end{remk}

The following theorem is the main result of the paper.
\begin{thm}\label{mainuniform} Let  a Gaussian process $\mathbf{X}(t),$ $t\in\mathbb R,$  its covariance function, the $f$-wavelet $\phi,$
and the corresponding $m$-wavelet $\psi$  satisfy the assumptions
of Theorem~{\rm\ref{8103}}.

Suppose that
\begin{enumerate}
\item[{\rm(i)}] assumption $S(\gamma), \gamma\in(0,1),$  holds true for $\phi$ and
$\psi;$
\item[{\rm(ii)}] the  integrals
$\int_{\mathbb
R}\ln^{\alpha}(1+|u|)|\widehat{\psi}(u)|\,du$ and
$\int_{\mathbb
R}\ln^{\alpha}(1+|u|)|\widehat{\phi}(u)|\,du$ converge for some
$\alpha>1/{2(1-\gamma)};$
  \item[{\rm(iii)}] there exist
constants $b_0$ and $c_j,$ $j\in \mathbb N_0,$ such that for all integer $k$  $\mathbf E|\xi_{0k}|^2\le b_0,$ $\mathbf
E|\eta_{jk}|^2\le c_j,$   and
\begin{equation}\label{sum_cj}\sum\limits_{j=0}^{\infty}\sqrt{c_j}2^{\frac
j2}j^{\alpha(1-\gamma)}<\infty.
\end{equation}
\end{enumerate}
Then $\mathbf{X}_{n,\mathbf{k}_n}(t)\to \mathbf{X}(t)$
uniformly in probability on each interval $[0,T]$ when  $n\to\infty,$ $k_0'\to \infty,$
and $k_j\to\infty$ for all $j\in \mathbb N_0\,.$
\end{thm}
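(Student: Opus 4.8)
The plan is to deduce the statement from Theorem~\ref{213}, applied to the sequence $\mathbf X_{n,\mathbf k_n}$. Each $\mathbf X_{n,\mathbf k_n}(t)$ is a finite linear combination of the Gaussian variables $\xi_{0k},\eta_{jk}$, hence a Gaussian process, and since $\phi$ and $\psi$ are continuous its sample paths are continuous, so it is separable in $([0,T],\rho)$ with $\rho(t,s)=|t-s|$. Pointwise convergence in probability of $\mathbf X_{n,\mathbf k_n}(t)$ to $\mathbf X(t)$ is immediate from Theorem~\ref{8103} (mean-square convergence, whose hypotheses are assumed). Thus everything reduces to exhibiting one function $\sigma(h)$, independent of $n$ and of $\mathbf k_n$, monotone near the origin with $\sigma(h)\to 0$, satisfying the entropy condition~(\ref{142}), and such that $\big(\mathbf E|\mathbf X_{n,\mathbf k_n}(t)-\mathbf X_{n,\mathbf k_n}(s)|^2\big)^{1/2}\le\sigma(h)$ whenever $|t-s|\le h$. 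By Remark~\ref{772} it suffices to reach $\sigma(h)=C\ln^{-\beta}(1+1/h)$ with $\beta=\alpha(1-\gamma)>1/2$, the strict inequality being exactly assumption~(ii).

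First I would estimate the increment. By the triangle inequality in $L_2(\Omega)$,
\[
\big(\mathbf E|\mathbf X_{n,\mathbf k_n}(t)-\mathbf X_{n,\mathbf k_n}(s)|^2\big)^{1/2}\le\Big(\mathbf E\Big|\sum_{|k|\le k_0'}\xi_{0k}\big(\phi_{0k}(t)-\phi_{0k}(s)\big)\Big|^2\Big)^{1/2}+\sum_{j=0}^{n-1}\Big(\mathbf E\Big|\sum_{|k|\le k_j}\eta_{jk}\big(\psi_{jk}(t)-\psi_{jk}(s)\big)\Big|^2\Big)^{1/2}.
\]
Expanding each square and using $|\mathbf E\eta_{jk}\overline{\eta_{jk'}}|\le(\mathbf E|\eta_{jk}|^2)^{1/2}(\mathbf E|\eta_{jk'}|^2)^{1/2}\le c_j$ (and the analogue with bound $b_0$ for the $\xi$'s) yields
\[
\big(\mathbf E|\mathbf X_{n,\mathbf k_n}(t)-\mathbf X_{n,\mathbf k_n}(s)|^2\big)^{1/2}\le\sqrt{b_0}\sum_{k\in\mathbb Z}|\phi_{0k}(t)-\phi_{0k}(s)|+\sum_{j=0}^{n-1}\sqrt{c_j}\sum_{k\in\mathbb Z}|\psi_{jk}(t)-\psi_{jk}(s)|,
\]
which is already uniform in $n$ and $\mathbf k_n$. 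So everything hinges on a modulus-of-continuity estimate for $\sum_{k\in\mathbb Z}|\delta(u-k)-\delta(v-k)|$, valid for any scaling function $\delta\in\{\phi,\psi\}$.

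The core lemma I would prove is: if $\delta$ satisfies $S'(\gamma)$ with weight $\Phi$ and $\int_{\mathbb R}\ln^\alpha(1+|y|)|\widehat\delta(y)|\,dy<\infty$, then $\sum_{k\in\mathbb Z}|\delta(u-k)-\delta(v-k)|\le C\ln^{-\alpha(1-\gamma)}(1+1/|u-v|)$ for $0<|u-v|\le1$. To obtain it, write $|\delta(u-k)-\delta(v-k)|=|\delta(u-k)-\delta(v-k)|^{1-\gamma}\,|\delta(u-k)-\delta(v-k)|^{\gamma}$. For the second factor use $|\delta|\le\Phi$ and $(a+b)^\gamma\le a^\gamma+b^\gamma$ to bound it by $\Phi^\gamma(|u-k|)+\Phi^\gamma(|v-k|)$; summing against the first factor, $S'(\gamma)$ gives $\sum_{k\in\mathbb Z}\big(\Phi^\gamma(|u-k|)+\Phi^\gamma(|v-k|)\big)\le C$ by dominating the sum with an integral, exactly as in the proof of Lemma~\ref{lem0}. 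For the first factor, the Fourier inversion formula (legitimate since $\widehat\delta\in L_1(\mathbb R)$ — it is bounded by $1$ a.e. and (ii) controls its tail) gives the $k$-independent bound $|\delta(u-k)-\delta(v-k)|\le\frac1{2\pi}\int_{\mathbb R}\min(2,|y|\,|u-v|)\,|\widehat\delta(y)|\,dy$. Splitting this integral at $|y|=1/|u-v|$ and, on $\{|y|\le1/|u-v|\}$, once more at $|y|=1$ to handle the degeneration of $\ln^\alpha(1+|y|)$ near the origin, and using $\ln^\alpha(1+|y|)\ge\ln^\alpha(1+1/|u-v|)$ on $\{|y|>1/|u-v|\}$ together with the monotonicity of $|y|/\ln^\alpha(1+|y|)$ for large $|y|$, one gets $|\delta(u-k)-\delta(v-k)|\le C\ln^{-\alpha}(1+1/|u-v|)$ uniformly in $k$; raising this to the power $1-\gamma$ and combining finishes the lemma.

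Finally I would assemble the pieces. By~(\ref{psi_jk}), $\sum_{k}|\psi_{jk}(t)-\psi_{jk}(s)|=2^{j/2}\sum_k|\psi(2^jt-k)-\psi(2^js-k)|$, similarly for $\phi$; so with $h=|t-s|$ the $\phi$-term is $\le\sqrt{b_0}\,C\ln^{-\alpha(1-\gamma)}(1+1/h)$, and in the $j$-sum I would split at $j_0:=\lfloor\log_2(1/h)\rfloor$. For $j\le j_0$ the lemma applies with $|u-v|=2^jh\le1$, and bounding $\ln(1+2^{-j}h^{-1})$ from below appropriately the partial contribution is controlled by $C\,\ln^{-\alpha(1-\gamma)}(1+1/h)\sum_{j\ge0}\sqrt{c_j}2^{j/2}$ plus a tail of $\sum_j\sqrt{c_j}2^{j/2}j^{\alpha(1-\gamma)}$; for $j>j_0$ the crude bound $\sum_k|\psi(2^jt-k)-\psi(2^js-k)|\le C$ (from $S'(1)$) combined with $\sum_{j>j_0}\sqrt{c_j}2^{j/2}\le j_0^{-\alpha(1-\gamma)}\sum_j\sqrt{c_j}2^{j/2}j^{\alpha(1-\gamma)}\le C\ln^{-\alpha(1-\gamma)}(1/h)$ does the job. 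This is precisely where assumption~(iii) enters, and it is calibrated so that every piece is $O\big(\ln^{-\alpha(1-\gamma)}(1+1/h)\big)$. Taking $\sigma(h)=C\ln^{-\alpha(1-\gamma)}(1+1/h)$ (adjusted to be constant for $h$ away from $0$), condition~(\ref{142}) holds by Remark~\ref{772}, and Theorem~\ref{213} gives convergence in $C([0,T])$, i.e.\ uniform convergence in probability on $[0,T]$. I expect the main obstacle to be the core lemma, and inside it the estimate of $\int_{|y|\le1/|u-v|}|y|\,|u-v|\,|\widehat\delta(y)|\,dy$ where the logarithmic weight degenerates near $y=0$; the bookkeeping of the $j$-sum is a secondary difficulty but routine once~(iii) is available.
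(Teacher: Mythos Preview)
Your strategy is correct and leads to the same modulus $\sigma(h)=C\ln^{-\alpha(1-\gamma)}(1+1/h)$, but the route differs from the paper's in one essential respect. The paper never splits the $j$-sum at $j_0=\lfloor\log_2(1/h)\rfloor$. Instead it proves the scale-$j$ version of your core lemma directly: from $|\delta_{jk}(x)-\delta_{jk}(y)|\le\frac{2^{j/2}}{\pi}\int_{\mathbb R}|\sin((x-y)u2^{j-1})|\,|\hat\delta(u)|\,du$ and the elementary inequality $|\sin(s/v)|\le\big(\ln(e^\alpha+|s|)/\ln(e^\alpha+|v|)\big)^\alpha$ one gets, after the trivial bound $\ln(e^\alpha+|u|2^{j-1})\le Cj\ln(e^\alpha+|u|+1)$, the pointwise estimate $|\delta_{jk}(x)-\delta_{jk}(y)|\le 2^{j/2}j^{\alpha}R_{1\alpha}\ln^{-\alpha}(e^\alpha+1/|x-y|)$ (this is Lemma~\ref{lem2}). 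Combining it with your $\sum_k\Phi^\gamma(|x-k|)\le C$ via the same $|\cdot|^{1-\gamma}|\cdot|^{\gamma}$ splitting (Lemma~\ref{lem1}) yields $\sum_k|\delta_{jk}(x)-\delta_{jk}(y)|\le C\,2^{j/2}j^{\alpha(1-\gamma)}\ln^{-\alpha(1-\gamma)}(e^\alpha+1/|x-y|)$ (Lemma~\ref{lemsumdelta}); the $j$-sum is then immediately $\sum_j\sqrt{c_j}\,2^{j/2}j^{\alpha(1-\gamma)}$, and condition~(\ref{sum_cj}) falls out with no case analysis.

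Your argument reaches the same endpoint, but the step ``bounding $\ln(1+2^{-j}h^{-1})$ from below appropriately'' is the one place that needs sharpening. As written, the bound $\ln(1+2^{-j}/h)\ge c\ln(1+1/h)$ is false uniformly in $j\le j_0$ (it degenerates as $j\uparrow j_0$), so the partial contribution is \emph{not} controlled by $C\ln^{-\alpha(1-\gamma)}(1+1/h)\sum_j\sqrt{c_j}2^{j/2}$ alone. What does hold for $0\le j\le j_0$ is $(j+1)\ln(1+2^{-j}/h)\ge c\ln(1+1/h)$, which gives $\ln^{-\alpha(1-\gamma)}(1+2^{-j}/h)\le C(j+1)^{\alpha(1-\gamma)}\ln^{-\alpha(1-\gamma)}(1+1/h)$ and hence the sum over $j\le j_0$ is already $\le C\ln^{-\alpha(1-\gamma)}(1+1/h)\sum_j\sqrt{c_j}2^{j/2}(j+1)^{\alpha(1-\gamma)}$; the split at $j_0$ then becomes superfluous and your proof collapses to the paper's. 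Your ``plus a tail'' can also be made to work (split further at $j_0/2$ and use $\sum_{j>j_0/2}\sqrt{c_j}2^{j/2}\le (j_0/2)^{-\alpha(1-\gamma)}\sum_j\sqrt{c_j}2^{j/2}j^{\alpha(1-\gamma)}$), but this is a detour. In short: the paper's sine inequality absorbs the $2^j$-scaling into a single Fourier-side factor $j^\alpha$, whereas your $\min(2,|y|h)$ bound is more elementary but forces you to recover that factor by hand.
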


\begin{remk}
If both wavelets $\phi$ and $\psi$  have compact supports, then some assumptions of
Theorem~\ref{mainuniform} are superfluous. In the following
theorem we give an example by considering approximants of the form
$$\mathbf{X}_{n}(t):=\sum_{k\in\mathbb Z}\xi_{0k}\phi_{0k}(t)+\sum_{j=0}^{n-1}\sum_{k\in\mathbb Z}
\eta_{jk}\psi_{jk}(t)\,.
$$

\end{remk}

\begin{thm}\label{th4} Let $\mathbf{X}(t),$ $t\in\mathbb R,$ be
a separable centered Gaussian random process such that its covariance function  $R(t,s)$ is continuous. Let the $f$-wavelet $\phi$
and the corresponding $m$-wavelet $\psi$ be continuous functions
with compact supports and the integrals  $\int_{\mathbb
R}\ln^{\alpha}(1+|u|)|\widehat{\psi}(u)|\,du$ and
$\int_{\mathbb
R}\ln^{\alpha}(1+|u|)|\widehat{\phi}(u)|\,du$ converge for some $\alpha>1/{2(1-\gamma)},$ $\gamma\in(0,1).$  If there exist
constants $c_j,$ $j\in \mathbb N_0,$ such that $\mathbf
E|\eta_{jk}|^2\le c_j$ for all $k\in \mathbb{Z},$ and
assumption {\rm(\ref{sum_cj})} is satisfied, then
$\mathbf{X}_{n}(t)\to \mathbf{X}(t)$ uniformly in
probability on each interval $[0,T]$ when  $n\to\infty.$
\end{thm}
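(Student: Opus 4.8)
The plan is to run the proof scheme behind Theorem~\ref{mainuniform}, of which this statement is the compact-support specialisation, using that compactness of the supports (i)~makes assumption $S(\gamma)$ automatic and (ii)~turns every series over a translation index $k$ into a finite sum on $[0,T]$, so that the uniform bound $\mathbf E|\xi_{0k}|^2\le b_0$ of Theorem~\ref{mainuniform} is no longer needed. First I would fix $l>0$ with $\mathrm{supp}\,\phi\cup\mathrm{supp}\,\psi\subseteq[-l,l]$; then $\Phi(x):=M\,\mathbf{1}_{[0,l]}(x)$ with $M:=\max(\sup|\phi|,\sup|\psi|)$ is a decreasing majorant with $\int_{\mathbb R}\Phi(|x|)^{\gamma}\,dx<\infty$ for every $\gamma>0$, so $S(\gamma)$ holds for all $\gamma\in(0,1)$. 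Moreover, for $t\in[0,T]$ one has $\phi_{0k}(t)=0$ when $|k|>N_0:=\lceil T+l\rceil$ and $\psi_{jk}(t)=0$ when $|k|>N_j:=\lceil 2^jT+l\rceil$, so on $[0,T]$
$$\mathbf X_n(t)=\sum_{|k|\le N_0}\xi_{0k}\phi_{0k}(t)+\sum_{j=0}^{n-1}\sum_{|k|\le N_j}\eta_{jk}\psi_{jk}(t),$$
a finite linear combination of continuous functions with jointly Gaussian coefficients; hence the restriction of $\mathbf X_n$ to $[0,T]$ is an a.s.\ continuous, therefore separable, Gaussian process, and only the finitely many $\xi_{0k}$ with $|k|\le N_0$ occur, each being in $L_2(\Omega)$ because $R$ is continuous and $\phi$ compactly supported. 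So I may set $b_0^{*}:=\max_{|k|\le N_0}\mathbf E|\xi_{0k}|^2<\infty$. By Theorem~\ref{213} it then suffices to establish (a)~pointwise convergence $\mathbf X_n(t)\to\mathbf X(t)$ in probability for each $t\in[0,T]$, and (b)~a bound $\sup_{n}\sup_{|t-s|\le h}(\mathbf E|\mathbf X_n(t)-\mathbf X_n(s)|^2)^{1/2}\le\sigma(h)$ with $\sigma$ monotone near $0$, $\sigma(h)\to0$, satisfying \eqref{142}.

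For (a): the displayed sum equals $\mathbf X_{n,\widetilde{\mathbf k}_n}(t)$ on $[0,T]$ for any $\widetilde{\mathbf k}_n$ whose entries dominate $N_0,N_0,\dots,N_{n-1}$, the omitted terms vanishing there; taking those entries (e.g.\ $\max(N_j,n)$) also to tend to infinity with $n$, Theorem~\ref{8103} applies --- its geometric hypotheses simplify considerably here, in particular $\int_{\mathbb R}c(x)\Phi(|x|)\,dx<\infty$ is automatic since $\Phi$ has compact support, while $S(1)$ holds as above --- and yields $\mathbf X_n(t)\to\mathbf X(t)$ in mean square, a fortiori in probability, for each $t\in[0,T]$.

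For (b): by Minkowski's inequality in $L_2(\Omega)$,
$$\bigl(\mathbf E|\mathbf X_n(t)-\mathbf X_n(s)|^2\bigr)^{1/2}\le\sqrt{b_0^{*}}\sum_{|k|\le N_0}|\phi(t-k)-\phi(s-k)|+\sum_{j=0}^{\infty}\sqrt{c_j}\,2^{j/2}\sum_{|k|\le N_j}|\psi(2^jt-k)-\psi(2^js-k)|,$$
so everything reduces to bounding $\sum_{k}|\psi(u-k)-\psi(v-k)|$ and the analogous $\phi$-sum. From the convergence of $\int_{\mathbb R}\ln^{\alpha}(1+|u|)|\widehat\psi(u)|\,du$ one gets, by splitting $|\psi(x+r)-\psi(x)|\le\frac1{2\pi}\int|\widehat\psi(y)|\min(2,|y||r|)\,dy$ at $|y|\sim1/|r|$, the logarithmic modulus of continuity $\omega_\psi(r)\le C\ln^{-\alpha}(1/|r|)$ for small $|r|$ (likewise $\omega_\phi$). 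Combining this with $|\psi(u-k)-\psi(v-k)|^{\gamma}\le 2^{\gamma}\bigl(\Phi(|u-k|)^{\gamma}+\Phi(|v-k|)^{\gamma}\bigr)$ and $\sup_{x}\sum_{k}\Phi(|x-k|)^{\gamma}<\infty$ (integral comparison, as in Lemma~\ref{lem0}) gives $\sum_{k}|\psi(u-k)-\psi(v-k)|\le C\,\omega_\psi(|u-v|)^{1-\gamma}$, hence, with $h:=|t-s|$, $\beta:=\alpha(1-\gamma)>\tfrac12$, and $g(r):=\min(1,\ln^{-\beta}(e+1/r))$,
$$\bigl(\mathbf E|\mathbf X_n(t)-\mathbf X_n(s)|^2\bigr)^{1/2}\le C\,g(h)+C\sum_{j=0}^{\infty}\sqrt{c_j}\,2^{j/2}\,g(2^jh),$$
a bound independent of $n$ and nondecreasing in $h$. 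It remains to show this is $O\bigl(\ln^{-\beta}(e^{\alpha}+1/h)\bigr)$ as $h\to0$: then, since $\beta>\tfrac12$, one may take $\sigma(h)=C'/\ln^{\beta}(e^{\alpha}+1/h)$, which satisfies \eqref{142} by Remark~\ref{772}, and Theorem~\ref{213} finishes the proof.

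This last estimate is the crux, and the expected main obstacle. Put $j_1:=\lfloor\log_2(1/h)\rfloor$ and $d_j:=\sqrt{c_j}\,2^{j/2}j^{\beta}$, so $\sum_jd_j<\infty$ by \eqref{sum_cj}. For $j\ge j_1$ use $g\le1$: $\sum_{j\ge j_1}\sqrt{c_j}2^{j/2}g(2^jh)\le j_1^{-\beta}\sum_{j\ge j_1}d_j\le C\ln^{-\beta}(1/h)$. For $j<j_1$ use $g(2^jh)\le C(j_1-j+1)^{-\beta}$ (since $1/(2^jh)\ge 2^{j_1-j}$) and split at $j=j_1/2$: the block $j\le j_1/2$ contributes $\le Cj_1^{-\beta}\sum_jd_j$ (as $(j_1-j+1)^{\beta}\ge Cj_1^{\beta}$, $j^{\beta}\ge1$), while the block $j_1/2<j<j_1$ contributes $\le Cj_1^{-\beta}\sum_{m\ge1}d_{j_1-m}(m+1)^{-\beta}$ (as $j^{\beta}\ge Cj_1^{\beta}$), and this last sum is bounded \emph{uniformly in $j_1$} by Cauchy--Schwarz, $\sum_{m\ge1}d_{j_1-m}(m+1)^{-\beta}\le\bigl(\sum_jd_j^2\bigr)^{1/2}\bigl(\sum_{m\ge1}(m+1)^{-2\beta}\bigr)^{1/2}<\infty$, the second factor being finite precisely because $2\beta>1$, i.e.\ because $\alpha>1/(2(1-\gamma))$. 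The delicate point is thus the ``critical scale'' $j\approx\log_2(1/h)$, where $2^{j/2}$ is of order $h^{-1/2}$ while the logarithmic gain has not yet accumulated; it is there, and only there, that the extra factor $j^{\alpha(1-\gamma)}$ in \eqref{sum_cj} and the restriction on $\alpha$ are genuinely used. The remaining ingredients --- the compact-support reductions, the logarithmic modulus of continuity from hypothesis~(ii), and the H\"older-type bound on $\sum_k|\psi(u-k)-\psi(v-k)|$ --- are routine.
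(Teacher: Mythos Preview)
Your argument is correct, but the route you take for the key equicontinuity estimate is genuinely different from the paper's, and more laborious. The paper's proof of this theorem is two lines: compact supports make $S(\gamma)$ and the hypotheses of Theorem~\ref{8103} automatic, so Theorem~\ref{mainuniform} (or rather its proof) applies. The point you correctly identify and address explicitly---that the uniform bound $\mathbf E|\xi_{0k}|^2\le b_0$ is not assumed here but is harmless since on $[0,T]$ only the finitely many $k$ with $|k|\le N_0$ occur---is left implicit in the paper.

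Where you and the paper diverge is in bounding $\sum_k|\psi_{jk}(t)-\psi_{jk}(s)|$. You first establish the modulus of continuity $\omega_\psi(r)\le C\ln^{-\alpha}(1/r)$ at unit scale, which at level $j$ yields a bound in terms of $g(2^jh)$ rather than $g(h)$; you then have to reassemble the sum $\sum_j\sqrt{c_j}\,2^{j/2}g(2^jh)$ via a dyadic split at $j_1\approx\log_2(1/h)$ and a Cauchy--Schwarz step (which is precisely where $2\beta>1$ enters for you). The paper avoids this entirely through Lemma~\ref{lem2}: the inequality $|\sin(s/v)|\le\ln^\alpha(e^\alpha+|s|)/\ln^\alpha(e^\alpha+|v|)$ absorbs the dilation $2^j$ into a factor $j^\alpha$ while keeping the denominator $\ln^\alpha(e^\alpha+1/|t-s|)$ in the \emph{original} increment, so after the $\gamma$-splitting one gets directly $\sqrt{S_j}\le C\sqrt{c_j}\,2^{j/2}j^{\alpha(1-\gamma)}\ln^{-\alpha(1-\gamma)}(e^\alpha+1/|t-s|)$, and summability over $j$ is immediate from \eqref{sum_cj}. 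Your approach is more elementary (no sine-logarithm trick) but requires the extra summation argument; the paper's is slicker and explains transparently why the weight $j^{\alpha(1-\gamma)}$ appears in \eqref{sum_cj}.
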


\begin{proof} The assumptions of Theorem~\ref{8103} and $S(\gamma), 0<\gamma<1,$  are satisfied because   $\phi$ and $\psi$ have compact supports. Therefore, the desired result follows from  Theorem~\ref{mainuniform}.
\end{proof}

\begin{remk}
For example, Daubechies wavelets satisfy the assumptions of Theorem~\ref{th4}.
\end{remk}

\section{Proof of the main theorem}
To prove Theorem~\ref{mainuniform} we need some auxiliary results.

\begin{lem}\label{lem1} If $\delta(x)$ is a scaling function satisfying assumption $S'(\gamma),$ then
\begin{equation}\label{supSgamma}\sup\limits_{x\in\mathbb
R}S_{\gamma}(x)\le 3\Phi^\gamma(0)+4\int_{1/2}^{\infty}\Phi^{\gamma}(t)\,dt,
\end{equation}
where
$$S_{\gamma}(x)=\sum\limits_{k\in\mathbb Z}|\delta(x-k)|^{\gamma}.$$
\end{lem}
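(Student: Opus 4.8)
The plan is to exploit the fact that $S_\gamma$ is periodic with period $1$: reindexing the series gives $S_\gamma(x+1)=\sum_{k\in\mathbb Z}|\delta(x+1-k)|^\gamma=S_\gamma(x)$, so it suffices to bound $S_\gamma$ on a single fundamental interval, and $x\in[-1/2,1/2]$ is the convenient choice. On this interval I would discard the scaling function itself and pass to its envelope: assumption $S'(\gamma)$ gives $|\delta(x-k)|\le\Phi(|x-k|)$ a.e., hence $S_\gamma(x)\le\sum_{k\in\mathbb Z}\Phi^{\gamma}(|x-k|)$, and the right-hand side is now a concrete numerical series, since $\Phi^{\gamma}$ is bounded, nonnegative and, because $\gamma>0$, decreasing on $[0,\infty)$.

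Next I would separate the single \emph{central} term from the \emph{tail}. For $x\in[-1/2,1/2]$ the only index with $|x-k|<1/2$ is $k=0$, where monotonicity gives $\Phi^{\gamma}(|x|)\le\Phi^{\gamma}(0)$. For every $k$ with $|k|\ge 1$ one has $|x-k|\ge|k|-1/2\ge1/2$, so $\Phi^{\gamma}(|x-k|)\le\Phi^{\gamma}(|k|-1/2)$ and the tail is at most $2\sum_{m\ge 1}\Phi^{\gamma}(m-1/2)$. This last sum is handled by the usual integral comparison for a decreasing function: the term $m=1$ is at most $\Phi^{\gamma}(1/2)\le\Phi^{\gamma}(0)$, while for $m\ge 2$ monotonicity yields $\Phi^{\gamma}(m-1/2)\le\int_{m-3/2}^{m-1/2}\Phi^{\gamma}(t)\,dt$, and these intervals are disjoint with union $[1/2,\infty)$, so $\sum_{m\ge2}\Phi^{\gamma}(m-1/2)\le\int_{1/2}^{\infty}\Phi^{\gamma}(t)\,dt$. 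Collecting the contributions $\Phi^{\gamma}(0)$, $2\Phi^{\gamma}(0)$ and $2\int_{1/2}^{\infty}\Phi^{\gamma}$ gives the claimed bound $\sup_{x}S_\gamma(x)\le 3\Phi^{\gamma}(0)+4\int_{1/2}^{\infty}\Phi^{\gamma}(t)\,dt$, in fact with room to spare in the constant of the integral term.

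I do not anticipate a genuine obstacle here; the lemma is elementary and the work is purely bookkeeping. The one point that must be respected is that $\Phi$ is only assumed defined and decreasing on $[0,\infty)$, so every integral comparison has to keep its left endpoint nonnegative — this is exactly why the index $k=0$ and the tail term $m=1$ are peeled off \emph{before} invoking the integral estimate, and it is also what forces the lower limit $1/2$ in the integral. A secondary, cosmetic point is that the envelope bound $|\delta|\le\Phi$ holds only almost everywhere, so strictly speaking the supremum on the left should be read as an essential supremum; alternatively one observes that the uniform majorant $\sum_{k}\Phi^{\gamma}(|x-k|)$ is itself finite and bounded by the stated constant for every $x$.
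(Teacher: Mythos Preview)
Your proposal is correct and follows essentially the same strategy as the paper: exploit the period-$1$ invariance of $S_\gamma$, replace $|\delta|$ by its decreasing envelope $\Phi$, peel off the small-index terms, and control the tail by an integral comparison. The only cosmetic difference is your choice of the fundamental interval $[-1/2,1/2]$ instead of the paper's $[0,1]$; with the centered interval you need to isolate only the single index $k=0$ (rather than $k\in\{-1,0,1\}$) and your integral comparison then yields the sharper constant $2$ in place of $4$ in front of $\int_{1/2}^\infty\Phi^\gamma$, exactly as you observe.
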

\begin{proof}
The lemma is a simple generalization of a result from \cite{har}.

Since $S_{\gamma}(x)$ is a periodic function with period
1,  it is sufficient to prove~(\ref{supSgamma}) for
$x\in[0,1].$

Notice, that for $x\in[0,1]$ and integer $|k|\ge 2$ the inequality $|x-k|\ge |k|/2$ holds true.  Hence, $\Phi(|x-k|)\le \Phi\left(|k|/2\right)$ and
$$S_{\gamma}(x)\le \Phi^{\gamma}(|x|)+\Phi^{\gamma}(|x+1|)
+\Phi^{\gamma}(|x-1|)+\sum\limits_{|k|\ge2}\Phi^{\gamma}\left({|k|}/2\right)$$
$$\le
3\Phi^{\gamma}(0)+2\sum\limits_{k=2}^{\infty}\,\int_{k-1}^{k}\Phi^{\gamma}\left(
t/2\right)\,dt=3\Phi^{\gamma}(0)+4\int_{\frac
12}^{\infty}\Phi^{\gamma}(t)\,dt.$$
\end{proof}

\begin{lem}\label{lem2} Let $\hat\delta(x)$ denote the Fourier transform of the scaling function\ $\delta(x),$ $\delta_{jk}(x):=2^{\frac
j2}\delta(2^jx-k),$ $j\in \mathbb N_0,$ $k\in\mathbb Z.$ If for  some
$\alpha>0$
\[\int_{\mathbb
R}\ln^{\alpha}(1+|u|)|\widehat{\delta}(u)|\,du<\infty,
\]
then for all $x,y\in \mathbb R$ and $k\in\mathbb Z$
\[|\delta_{0k}(x)-\delta_{0k}(y)|\le
R_{0\alpha}\cdot\ln^{-\alpha}\left(e^{\alpha}+\frac{1}{|x-y|}\right),
\]
\[|\delta_{jk}(x)-\delta_{jk}(y)|\le 2^{\frac
j2}j^{\alpha}R_{1\alpha}\cdot
\ln^{-\alpha}\left(e^{\alpha}+\frac{1}{|x-y|}\right),\ \  j\in \mathbb N_0,
\]
where $R_{0\alpha}:=\frac{1}{\pi}\int_{\mathbb
R}\ln^{\alpha}\left(e^{\alpha}+\frac{|u|}2\right)|\widehat{\delta}(u)|\,du$
and $R_{1\alpha}:=\frac{1}{\pi}\int_{\mathbb
R}\ln^{\alpha}\left(e^{\alpha}+|u|+1\right)|\widehat{\delta}(u)|\,du.$
\end{lem}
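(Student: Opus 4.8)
The plan is to estimate the increments of $\delta_{jk}$ by passing through the Fourier representation and splitting the frequency integral at a threshold that will be optimized. Writing $\delta(x)=\frac{1}{2\pi}\int_{\mathbb R}e^{iux}\widehat\delta(u)\,du$ (justified because the hypothesis $\int_{\mathbb R}\ln^\alpha(1+|u|)|\widehat\delta(u)|\,du<\infty$ forces $\widehat\delta\in L_1(\mathbb R)$, so $\delta$ is bounded and continuous), we get for the base level
\[
|\delta_{0k}(x)-\delta_{0k}(y)|=|\delta(x-k)-\delta(y-k)|\le\frac{1}{2\pi}\int_{\mathbb R}|e^{iu(x-k)}-e^{iu(y-k)}|\,|\widehat\delta(u)|\,du .
\]
Now use the two elementary bounds $|e^{iu(x-k)}-e^{iu(y-k)}|\le 2$ always, and $|e^{iu(x-k)}-e^{iu(y-k)}|\le |u|\,|x-y|$. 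First I would dispose of the trivial case $|x-y|\ge 1$ (or some fixed constant), where the claimed right-hand side is bounded below by a constant times $\ln^{-\alpha}(e^\alpha+1)$ and the bound $|\delta(x-k)-\delta(y-k)|\le 2\,\frac1{2\pi}\int|\widehat\delta|\,du$ already suffices after enlarging the constant. So assume $|x-y|<1$ and set $h:=|x-y|$.

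The key step is the split $\int_{\mathbb R}=\int_{|u|\le N}+\int_{|u|>N}$ with $N:=N(h)$ to be chosen. On $|u|\le N$ apply the Lipschitz bound to obtain a contribution at most $\frac{h}{2\pi}\int_{|u|\le N}|u|\,|\widehat\delta(u)|\,du$; on $|u|>N$ apply the bound $2$ to get at most $\frac1\pi\int_{|u|>N}|\widehat\delta(u)|\,du$. To control the tail one uses that for $|u|>N$ one has $1\le \ln^\alpha(1+|u|)/\ln^\alpha(1+N)$, hence $\int_{|u|>N}|\widehat\delta(u)|\,du\le \ln^{-\alpha}(1+N)\int_{\mathbb R}\ln^\alpha(1+|u|)|\widehat\delta(u)|\,du$. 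The natural choice is $N=1/h$, which makes the tail term of order $\ln^{-\alpha}(1+1/h)$, essentially the target. The only remaining issue is the head term $\frac{h}{2\pi}\int_{|u|\le 1/h}|u|\,|\widehat\delta(u)|\,du$: since $|u|\le 1/h$ on this range, $h|u|\le 1$, so $h|u|\le \ln^\alpha(e^\alpha+|u|)\big/\ln^\alpha(e^\alpha+1/h)$ fails in general — instead I would use $h|u|\le 1 \le \ln^\alpha(e^\alpha+|u|)/\ln^\alpha(e^\alpha)$ is also not quite it; the clean route is: $h|u|\le 1$ and simultaneously $1\le \ln^\alpha(e^\alpha+1/h)\big/\ln^\alpha(e^\alpha+1/h)$, so write $h|u|=\dfrac{h|u|}{1}\le \dfrac{1}{\ln^\alpha(e^\alpha+1/h)}\cdot \ln^\alpha(e^\alpha+|u|)$, valid because on $|u|\le 1/h$ we have $\ln^\alpha(e^\alpha+|u|)\le \ln^\alpha(e^\alpha+1/h)$ yet we need the reverse — so one actually bounds $h|u|\le \ln^\alpha(e^\alpha+1/h)^{-1}\cdot\ln^\alpha(e^\alpha+1/h)\cdot h|u|$ trivially and instead estimates directly: on $|u|\le 1/h$, $h|u|\le 1$ hence $h|u|\cdot\ln^\alpha(e^\alpha+1/h)\le \ln^\alpha(e^\alpha+1/h)$, and $\ln^\alpha(e^\alpha+1/h)\le \ln^\alpha(e^\alpha+|u|)\cdot\big(\ln(e^\alpha+1/h)/\ln(e^\alpha+|u|)\big)^\alpha$ which is not uniformly bounded. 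The correct and standard resolution is: split the head further, or more simply note $h|u|\le \min(1,h|u|)$ and use $\min(1,h|u|)\le C_\alpha \ln^{-\alpha}(e^\alpha+1/h)\,\ln^\alpha(e^\alpha+|u|)$ for all $u$, which does hold (check separately $|u|\le 1/h$ and $|u|>1/h$), giving in one stroke
\[
|\delta(x-k)-\delta(y-k)|\le \frac1{2\pi}\int_{\mathbb R}\min(1,h|u|)\,|\widehat\delta(u)|\,du\cdot 2\le \frac{C_\alpha}{\ln^\alpha(e^\alpha+1/h)}\cdot\frac1\pi\int_{\mathbb R}\ln^\alpha(e^\alpha+|u|)|\widehat\delta(u)|\,du.
\]
Absorbing the $|u|/2$ versus $|u|$ discrepancy into the definition of $R_{0\alpha}$ (where $e^\alpha+|u|/2$ appears) is a matter of the substitution $\widehat{\delta_{00}}(u)=\widehat\delta(u)$ and rescaling; one obtains exactly the stated constant $R_{0\alpha}=\frac1\pi\int_{\mathbb R}\ln^\alpha(e^\alpha+|u|/2)|\widehat\delta(u)|\,du$ by instead splitting at $N=2/h$ so that $h|u|/2\le 1$ on the head.

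For the dyadic levels, note $\delta_{jk}(x)-\delta_{jk}(y)=2^{j/2}\big(\delta(2^jx-k)-\delta(2^jy-k)\big)$, so by the base estimate with $x,y$ replaced by $2^jx-k,2^jy-k$ and $h$ replaced by $2^j|x-y|$,
\[
|\delta_{jk}(x)-\delta_{jk}(y)|\le 2^{j/2}\,R_{0\alpha}'\,\ln^{-\alpha}\!\Big(e^\alpha+\tfrac{1}{2^j|x-y|}\Big)\le 2^{j/2}\,R_{0\alpha}'\,\ln^{-\alpha}\!\Big(e^\alpha+\tfrac{1}{|x-y|}\Big)\cdot\Big(\tfrac{\ln(e^\alpha+1/|x-y|)}{\ln(e^\alpha+1/(2^j|x-y|))}\Big)^\alpha,
\]
and the last ratio is at most $C\,j^\alpha$ uniformly in $|x-y|\le 1$ (elementary: $\ln(e^\alpha+1/h)\le \ln((e^\alpha+1/h)^{\,?})$ — more precisely $\ln(a+t)\le \ln(a)+\ln(1+2^jt/a)+\ldots$; cleanly, $\ln(e^\alpha+1/h)\le \ln(2^j)+\ln(e^\alpha+1/(2^jh))$ once $2^j\ge 1$, whence the ratio is $\le (1+\tfrac{j\ln 2}{\ln(e^\alpha+1/(2^jh))})^\alpha\le (1+\tfrac{j\ln 2}{\alpha})^\alpha\le C j^\alpha$ using $\ln(e^\alpha+\cdot)\ge\alpha$). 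This yields the second inequality with $R_{1\alpha}=\frac1\pi\int_{\mathbb R}\ln^\alpha(e^\alpha+|u|+1)|\widehat\delta(u)|\,du$ after tracking the constants through the same head/tail split, where the $+1$ shift comes from handling the $-k$ translation in the argument of $\delta$ (the bound must be uniform in $k$, and the translation-invariance of $|\widehat\delta|$ means $k$ enters only through the already-absorbed phase factor).

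The main obstacle is the bookkeeping of constants — specifically, getting exactly the stated $R_{0\alpha}$ and $R_{1\alpha}$ rather than merely a constant multiple of $\int\ln^\alpha(1+|u|)|\widehat\delta(u)|\,du$ — which forces a careful choice of the split point ($N=2/h$ for the base case, to produce the $|u|/2$ inside the logarithm) and a careful elementary lemma of the form $\min(1,h|u|)\le \ln^\alpha(e^\alpha+|u|/2)\big/\ln^\alpha(e^\alpha+1/h)$ valid for all $u\in\mathbb R$ and all $0<h\le 1$. Everything else — the Fourier inversion, the two crude pointwise bounds on $|e^{ia}-e^{ib}|$, and the rescaling from level $0$ to level $j$ — is routine.
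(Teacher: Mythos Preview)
Your overall strategy---Fourier inversion followed by a pointwise bound on $|e^{iux}-e^{iuy}|$---is the paper's starting point too. Where you diverge is in how you convert that bound into the logarithmic modulus of continuity. You attempt a head/tail frequency split at $N\sim 1/h$, work through several dead ends, and eventually land on the inequality $\min(1,h|u|/2)\le \ln^\alpha(e^\alpha+|u|/2)\big/\ln^\alpha(e^\alpha+1/h)$. The paper instead writes $|e^{iux}-e^{iuy}|=2\,|\sin((x-y)u/2)|$ and invokes in one stroke the elementary inequality
\[
\Bigl|\sin\frac{s}{v}\Bigr|\le \left(\frac{\ln(e^\alpha+|s|)}{\ln(e^\alpha+|v|)}\right)^{\alpha},\qquad v\ne 0,
\]
with $s=u\,2^{j}/2$ and $v=1/|x-y|$. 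This immediately gives
\[
|\delta_{jk}(x)-\delta_{jk}(y)|\le \frac{2^{j/2}}{\pi\,\ln^\alpha\!\left(e^\alpha+1/|x-y|\right)}\int_{\mathbb R}\ln^\alpha\!\Bigl(e^\alpha+\tfrac{|u|2^{j}}{2}\Bigr)\,|\widehat\delta(u)|\,du,
\]
and then the single estimate $e^\alpha+|u|\,2^{j-1}\le (e^\alpha+|u|+1)^j$ (valid for all $j\ge 1$ since $e^\alpha+|u|+1\ge 2$) produces exactly the factor $j^\alpha$ and the constant $R_{1\alpha}$. No splitting, no separate treatment of large $|x-y|$, no case analysis; the inequality you were groping toward is essentially this sine inequality, and stating it directly removes all the back-and-forth.

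Two specific points in your write-up are off. First, your route to the level-$j$ bound---apply the $j=0$ estimate with $h$ replaced by $2^jh$ and then control the ratio $\ln(e^\alpha+1/h)\big/\ln(e^\alpha+1/(2^jh))$---yields $C_\alpha\,2^{j/2}j^\alpha R_{0\alpha}\,\ln^{-\alpha}(e^\alpha+1/h)$ with an uncontrolled $\alpha$-dependent constant in front, not the stated $2^{j/2}j^\alpha R_{1\alpha}$. To recover the exact constant you must keep the $2^j$ inside the frequency integral, as the paper does, and use $\ln(e^\alpha+|u|\,2^{j-1})\le j\ln(e^\alpha+|u|+1)$ there. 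Second, your explanation that the ``$+1$'' in $R_{1\alpha}$ comes from the translation by $k$ is wrong (and you half-recognize this yourself, noting that $k$ contributes only a unimodular phase). The $+1$ is there precisely so that the base $e^\alpha+|u|+1$ is at least $2$, which is what makes $(e^\alpha+|u|+1)^j$ dominate $e^\alpha+|u|\,2^{j-1}$ for every $j\ge 1$ and every $u$.
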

\begin{proof}
Since $\delta(x)=\frac{1}{2\pi}\int_{\mathbb
R}e^{ixu}\hat\delta(u)\,du,$ we have
\begin{eqnarray}\label{intsin}|\delta_{jk}(x)-\delta_{jk}(y)|
\le\frac{2^{\frac j2}}{2\pi}\int_{\mathbb R}
\left|e^{ixu2^j}-e^{iyu2^j}\right||\hat\delta\left(
u\right)|\,du=\frac{2^{\frac j2}}{\pi}\int_{\mathbb R}
\left|\sin{\frac{(x-y)u2^j}2}\right||\hat\delta\left( u\right)|\,du.
\end{eqnarray}
Note that for $v\ne0$ the following inequality holds:
\begin{eqnarray}\label{sinln}\left|\sin\left(\frac{s}v\right)\right|\le
\frac{\left|\ln\left(e^{\alpha}+|s|\right)\right|^{\alpha}}
{\left|\ln\left(e^{\alpha}+|v|\right)\right|^{\alpha}}\,.
\end{eqnarray}
By~(\ref{intsin}) and~(\ref{sinln}) we obtain
\begin{eqnarray*}|\delta_{jk}(x)-\delta_{jk}(y)|\le
\frac{2^{\frac
j2}}{\pi\ln^{\alpha}\left(e^{\alpha}+\frac1{|x-y|}\right)}\int_{\mathbb
R} {\ln^{\alpha}\left(e^{\alpha}+\frac
{|u|2^j}2\right)}|\hat\delta\left( u\right)|\,du.
\end{eqnarray*}
The assertion of the lemma follows from this inequality.
\end{proof}

\begin{lem}\label{lemsumdelta} If a scaling function $\delta(x)$ satisfies the assumptions of
Lemmata~{\rm\ref{lem1}} and {\rm\ref{lem2}}, then for
$\gamma\in(0,1)$ and  $\alpha>0:$
\begin{eqnarray}\label{sumdeltajk-}\sum\limits_{k\in\mathbb
Z}|\delta_{jk}(x)-\delta_{jk}(y)|\le 2^{\frac{j}{2}+1}j^{\alpha(1-\gamma)}R_1^{1-\gamma}(\alpha)
\left(3\Phi^{\gamma}(0)+4\int_{1/2}^{\infty}\Phi^{\gamma}(t)\,dt\right)\nonumber\\
\times\left(\ln\left(e^{\alpha}+\frac{1}{|x-y|}\right)\right)^{-\alpha(1-\gamma)},
\ j\ne 0 ,
\end{eqnarray}

\begin{eqnarray}\label{sumdelta0k-}\sum\limits_{k\in\mathbb
Z}|\delta_{0k}(x)-\delta_{0k}(y)|\le 2R_0^{1-\gamma}(\alpha)\left(3\Phi^{\gamma}(0)+4\int_{
1/2}^{\infty}\Phi^{\gamma}(t)\,dt\right)\nonumber\\
\times\left(\ln\left(e^{\alpha}+\frac1{|x-y|}\right)\right)^{-\alpha(1-\gamma)}.
\end{eqnarray}
\end{lem}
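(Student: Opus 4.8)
The plan is to combine the pointwise modulus-of-continuity bounds from Lemma~\ref{lem2} with the summability estimate for $\sum_k |\delta_{jk}|^\gamma$ from Lemma~\ref{lem1}, using a splitting $|\delta_{jk}(x)-\delta_{jk}(y)| = |\delta_{jk}(x)-\delta_{jk}(y)|^{1-\gamma}\cdot|\delta_{jk}(x)-\delta_{jk}(y)|^{\gamma}$. For the factor raised to the power $1-\gamma$ we use the Fourier bound of Lemma~\ref{lem2}, which contributes the $\ln^{-\alpha(1-\gamma)}$ decay in $|x-y|$ and the growth factors $2^{(j/2)(1-\gamma)}$ and $j^{\alpha(1-\gamma)}$ (with $R_{1\alpha}^{1-\gamma}$ or $R_{0\alpha}^{1-\gamma}$); for the factor raised to the power $\gamma$ we bound $|\delta_{jk}(x)-\delta_{jk}(y)|^\gamma \le (|\delta_{jk}(x)|+|\delta_{jk}(y)|)^\gamma \le |\delta_{jk}(x)|^\gamma + |\delta_{jk}(y)|^\gamma$, using the elementary inequality $(a+b)^\gamma\le a^\gamma+b^\gamma$ valid for $\gamma\in(0,1)$ and $a,b\ge 0$.

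First I would write, for $j\ne 0$,
\begin{eqnarray*}
\sum_{k\in\mathbb Z}|\delta_{jk}(x)-\delta_{jk}(y)|
&\le& \sup_{k\in\mathbb Z}|\delta_{jk}(x)-\delta_{jk}(y)|^{1-\gamma}
\sum_{k\in\mathbb Z}\left(|\delta_{jk}(x)|^\gamma+|\delta_{jk}(y)|^\gamma\right).
\end{eqnarray*}
Then I would substitute the Lemma~\ref{lem2} bound
$|\delta_{jk}(x)-\delta_{jk}(y)|\le 2^{j/2}j^\alpha R_{1\alpha}\ln^{-\alpha}(e^\alpha+1/|x-y|)$
into the first factor, raised to the power $1-\gamma$, giving
$2^{(j/2)(1-\gamma)}j^{\alpha(1-\gamma)}R_{1\alpha}^{1-\gamma}\ln^{-\alpha(1-\gamma)}(e^\alpha+1/|x-y|)$.
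For the second factor, note $\sum_k|\delta_{jk}(x)|^\gamma = 2^{j\gamma/2}\sum_k|\delta(2^jx-k)|^\gamma = 2^{j\gamma/2}S_\gamma(2^jx)$, and by Lemma~\ref{lem1} this is at most $2^{j\gamma/2}\bigl(3\Phi^\gamma(0)+4\int_{1/2}^\infty\Phi^\gamma(t)\,dt\bigr)$; the same bound holds with $y$ in place of $x$, so the sum of the two is at most $2\cdot 2^{j\gamma/2}\bigl(3\Phi^\gamma(0)+4\int_{1/2}^\infty\Phi^\gamma(t)\,dt\bigr)$. Multiplying, the powers of $2$ combine as $2^{(j/2)(1-\gamma)}\cdot 2^{j\gamma/2} = 2^{j/2}$, and together with the factor $2$ this yields exactly $2^{j/2+1}$, matching~(\ref{sumdeltajk-}). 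The case $j=0$ is identical except that the Lemma~\ref{lem2} bound has no factors $2^{j/2}$ or $j^\alpha$ and uses $R_{0\alpha}$ rather than $R_{1\alpha}$, producing~(\ref{sumdelta0k-}).

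The only genuinely delicate point is the interchange of supremum and sum in the splitting, i.e.\ justifying
$\sum_k a_k^{1-\gamma}b_k^{\gamma}\le (\sup_k a_k)^{1-\gamma}\sum_k b_k$ with $a_k = |\delta_{jk}(x)-\delta_{jk}(y)|$ and $b_k = |\delta_{jk}(x)|^\gamma+|\delta_{jk}(y)|^\gamma$ — but this is immediate since $a_k^{1-\gamma}\le(\sup_k a_k)^{1-\gamma}$ termwise; one should just check all series converge, which follows from assumption $S'(\gamma)$ (guaranteeing $\sum_k b_k<\infty$) and boundedness of $\hat\delta$ in $L_1$ against the logarithmic weight (guaranteeing each $a_k$ is finite and uniformly bounded). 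Thus no real obstacle arises; the proof is essentially bookkeeping of the exponents of $2^j$ and $j$ to see that they recombine into the stated constants.
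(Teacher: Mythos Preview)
Your proposal is correct and follows essentially the same route as the paper: the paper also writes $|\delta_{jk}(x)-\delta_{jk}(y)| = |\delta_{jk}(x)-\delta_{jk}(y)|^{1-\gamma}\cdot|\delta_{jk}(x)-\delta_{jk}(y)|^{\gamma}$, applies the Lemma~\ref{lem2} bound (uniform in $k$) to the first factor, uses $(a+b)^\gamma\le a^\gamma+b^\gamma$ for $\gamma\in(0,1)$ to reduce the second factor to $\sum_k|\delta_{jk}(x)|^\gamma+\sum_k|\delta_{jk}(y)|^\gamma$, and then invokes Lemma~\ref{lem1}. The bookkeeping of the powers of $2^{j/2}$ is identical.
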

\begin{proof}
By lemma~\ref{lem2} for $j\ge 1$ we obtain
\begin{eqnarray*}\sum\limits_{k\in\mathbb
Z}|\delta_{jk}(x)-\delta_{jk}(y)|=\sum\limits_{k\in\mathbb
Z}|\delta_{jk}(x)-\delta_{jk}(y)|^{1-\gamma}\cdot|\delta_{jk}(x)-\delta_{jk}(y)|^{\gamma}\\
\le
\left(\ln\left(e^{\alpha}+\frac1{|x-y|}\right)\right)^{-\alpha(1-\gamma)}\left(2^{\frac{j}{2}}
j^{\alpha}R_1(\alpha)\right)^{1-\gamma}\cdot\sum\limits_{k\in\mathbb
Z}|\delta_{jk}(x)-\delta_{jk}(y)|^{\gamma}.
\end{eqnarray*}

We now make use of the inequality $|a+b|^\alpha\le q_\alpha\left(|a|^\alpha+|b|^\alpha\right), $ where
\begin{equation}\label{c_alpha}
q_{\alpha}=\begin{cases} 1,& \alpha\le 1,\\
2^{\alpha-1},& \alpha>1.
\end{cases}\end{equation}
By lemma~\ref{lem1}  we get
\begin{eqnarray*}\sum\limits_{k\in\mathbb
Z}|\delta_{jk}(x)-\delta_{jk}(y)|^{\gamma}&\le & 2\sup\limits_{x\in\mathbb
R}\sum\limits_{k\in\mathbb
Z}|\delta_{jk}(x)|^{\gamma}=2^{\frac{j\gamma}{2}+1}\sup\limits_{x\in\mathbb
R}\sum\limits_{k\in\mathbb
Z}|\delta(2^jx-k)|^{\gamma}\\
&\le &2^{\frac{j\gamma}{2}+1}\left(3\Phi^{\gamma}(0)+4\int_{1/2}^{\infty}\Phi^{\gamma}(t)\,dt\right).
\end{eqnarray*}
Inequality (\ref{sumdeltajk-}) follows from  this estimate.  The proof of inequality~(\ref{sumdelta0k-}) is similar.

\end{proof}
Now we are ready to prove Theorem~\ref{mainuniform}.

\begin{proof}
In virtue of Theorem~\ref{213} and Remark~\ref{772}  it is
sufficient to prove that $\mathbf{X}_{n,\mathbf{k}_n}(t)\to
\mathbf{X}(t)$ in mean square and
$$\left(\mathbf{E}\left|\mathbf{X}_{n,\mathbf{k}_n}(t)-\mathbf{X}_{n,\mathbf{k}_n}(s)\right|^2\right)^{\frac12}\le
\frac
C{\ln^{\beta}\left(e^{\alpha}+\frac1{|t-s|}\right)}$$
for some $C>0, \ \beta>1/2,$ and $\alpha>0.$

Since $\gamma<1,$ assumption $S(1)$ holds true for $\phi(x)$ and
$\psi(x)$. Hence, by Theorem~\ref{8103} we get
$\mathbf{X}_{n,\mathbf{k}_n}(t)\to \mathbf{X}(t)$ in mean square.

Note that (\ref{x_nk}) implies
\begin{eqnarray*}\left(\mathbf{E}\left|\mathbf{X}_{n,\mathbf{k}_n}(t)-
\mathbf{X}_{n,\mathbf{k}_n}(s)\right|^2\right)^{\frac12} \le
\left(\mathbf E\left|\sum\limits_{|k|\le
k_0'}\xi_{0k}(\phi_{0k}(t)-\phi_{0k}(s))\right|^2\right)^{1/2}\\+\sum\limits_{j=0}^{n-1}\left(\mathbf
E\left|\sum\limits_{|k|\le
k_j}\eta_{jk}(\psi_{jk}(t)-\psi_{jk}(s))\right|^2\right)^{1/2}=:\sqrt{S}+\sum\limits_{j=0}^{n-1}\sqrt{S_j}\,.
\end{eqnarray*}
We will only show how to handle $S_j.$ A similar approach can be
used to deal with the remaining term $S.$

$S_j$ can be bounded as follows
$$S_j=\sum\limits_{|k|\le k_j}\sum\limits_{|l|\le k_j}
 \mathbf
 E\eta_{jk}\overline{\eta_{jl}}(\psi_{jk}(t)-\psi_{jk}(s))(\psi_{jl}(t)-\psi_{jl}(s))
\le c_{j}\left(\sum\limits_{k\in\mathbb
Z} |\psi_{jk}(t)-\psi_{jk}(s)|\right)^2. $$
By lemma~\ref{lemsumdelta}
$$S_j\le 2^jj^{2\alpha(1-\gamma)}c_jL^2\left(\ln\left(e^{\alpha}+\frac1{|t-s|}\right)\right)^{-2\alpha(1-\gamma)},$$ where
$$L:=2R_1^{1-\gamma}(\alpha)\left(3\Phi^{\gamma}(0)+4\int_{1/2}^{\infty}\Phi^{\gamma}(t)\,dt\right).$$
Similarly,
$$S\le b_0 L_1^2\left(\ln\left(e^{\alpha}+\frac1{|t-s|}\right)\right)^{-2\alpha(1-\gamma)},$$ where
$$L_1:=2 R_0^{1-\gamma}(\alpha)\left(3\Phi^{\gamma}(0)+4\int_{1/2}^{\infty}\Phi^{\gamma}(t)\,dt\right).$$
Therefore, we conclude that
$$\left(\mathbf{E}\left|\mathbf{X}_{n,\mathbf{k}_n}(t)-\mathbf{X}_{n,\mathbf{k}_n}(s)\right|^2\right)^{\frac12}\le
\frac
C{\left(\ln\left(e^{\alpha}+\frac1{|t-s|}\right)\right)^{\alpha(1-\gamma)}},$$
where
$$C:=\sqrt{b_0}L_1+L\sum\limits_{j=0}^{\infty}\sqrt{c_j}2^{\frac
j2}j^{\alpha(1-\gamma)}<\infty\ \ \mbox{and} \ \ \beta:=\alpha(1-\gamma)>1/2.$$
\end{proof}

\section{Examples}\label{sec5}
In this section we consider some examples of wavelets and stochastic processes which satisfy assumption~(\ref{sum_cj}) of Theorem~\ref{mainuniform}.
\begin{expl}
Let  $\widehat{\psi}$ be a Lipschitz function of order $\varkappa>0$ ($\varkappa=1$ for the case   $\sup_{u\in \mathbb{R}}|\widehat{\psi}'(u)|\le C$), i.e.
$$|\widehat{\psi}(u)-\widehat{\psi}(v)|\le C|u-v|^{\varkappa}.$$ Assume that for the covariance function  $R(t,s)$  $$\int_{\mathbb
R}\int_{\mathbb R}|R(t,s)|\,dt\,ds<\infty$$ and
$$\int_{\mathbb R}\int_{\mathbb
R}\left|\widehat{R}_2(z,w)\right|\cdot\left|z\right|^{\varkappa}\cdot\left|w\right|^{\varkappa}\,dz\,dw<\infty,$$
where $$\widehat{R}_2(z,w)=\int_{\mathbb
R}\int_{\mathbb R}{R}(u,v)\,e^{-izu}e^{-iwv}\,du\,dv.$$

Now we show that $\mathbf E|\eta_{jk}|^2\le c_j$ for all  $k\in\mathbb Z$ and find suitable upper bounds for $c_j.$

By Parseval's theorem,
$$\mathbf
E\eta_{jk}\overline{\eta_{jl}}=\int_{\mathbb
R}\int_{\mathbb R}\mathbf E
\mathbf{X}(u)\overline{\mathbf{X}(v)}\
\overline{\psi_{jk}(u)}\psi_{jl}(v)\,dudv$$$$=\frac{1}{(2
\pi)^2}\int_{\mathbb R}\int_{\mathbb
R}\widehat{R}_2(z,w)\,\overline{\widehat{\psi}_{jk}(z)}\,\widehat{\psi}_{jl}(w)\,dz\,dw.$$
Since
\[\label{hatpsijk}\widehat{\psi}_{jk}(z)=\frac{e^{-i\frac k{2^j}z}}{2^{j/2}}\cdot\widehat{\psi}\left(\frac
z{2^j}\right),
\]
it follows that
\[\left|\mathbf E\eta_{jk}\overline{\eta_{jl}}\right|\le\frac{1}{2^{j}(2
\pi)^2}\int_{\mathbb R}\int_{\mathbb
R}\left|\widehat{R}_2(z,w)\right|\cdot\left|\widehat{\psi}\left(\frac
z{2^j}\right)\right|\cdot\left|\widehat{\psi}\left(\frac
w{2^j}\right)\right|\,dz\,dw.\]

By properties of the $m$-wavelet $\psi$
we have $\widehat{\psi}(0)=0.$ Therefore, using the Lipschitz conditions, we obtain
$$\left|\mathbf
E\eta_{jk}\overline{\eta_{jl}}\right|\le\frac{C^2}{(2
\pi)^2 2^{j(1+2\varkappa)}}\int_{\mathbb
R}\int_{\mathbb
R}\left|\widehat{R}_2(z,w)\right|\cdot\left|z\right|^{\varkappa}\cdot\left|w\right|^{\varkappa}\,dz\,dw.$$
This means that $\sqrt{c_j}\le C/{2^{j/2(1+2\varkappa)}}$ and assumption~(\ref{sum_cj}) holds.
\end{expl}
In the following example we consider the case of stationary stochastic processes. This case was studied in detail by us in \cite{kozol1}. Note that
 assumptions in the example are much simpler than those used in \cite{kozol1}.
\begin{expl}
Let  $\mathbf{X}(t)$ be a centered  short-memory stationary stochastic process and $\widehat{\psi}$ be a Lipschitz function of order $\varkappa>0.$ Assume that the covariance function $R(t-s):=\mathbf E\mathbf{X}(t)\overline{\mathbf{X}(s)}$ satisfies the following condition
$$\int_{\mathbb R}\left|\widehat{R}(z)\right|\cdot\left|z\right|^{2\varkappa}\,dz<\infty.$$

By  Parseval's theorem we deduce
$$|\mathbf E\eta_{jk}\overline{\eta_{jl}}|=\left|\int_{\mathbb R}\int_{\mathbb R}R(u-v)\overline{\psi_{jk}(u)}\,du\,\psi_{jl}(v)\,dv\right|=\left|\int_{\mathbb R}\int_{\mathbb R}\frac{e^{-ivz}}{2 \pi}\widehat{R}(z)\overline{\widehat{\psi}_{jk}(z)}\,dz\psi_{jl}(v)\,dv\right|$$
$$\le\frac{1}{2 \pi}\int_{\mathbb R}\left|\widehat{R}(z)\right|\,\left|\overline{\widehat{\psi}_{jk}(z)}\,\widehat{\psi}_{jl}(z)\right|\,dz=\frac{1}{2^{j+1}
\pi}\int_{\mathbb
R}\left|\widehat{R}(z)\right|\cdot\left|\widehat{\psi}\left(\frac
z{2^j}\right)\right|\cdot\left|\widehat{\psi}\left(\frac
z{2^j}\right)\right|\,dz.$$
Thus, by the Lipschitz conditions, for all $k,l\in\mathbb Z:$
$$\left|\mathbf
E\eta_{jk}\overline{\eta_{jl}}\right|\le\frac C{\pi\, 2^{1+j(1+2\varkappa)}}\int_{\mathbb
R}\left|\widehat{R}(z)\right|\cdot\left|z\right|^{2\varkappa}\,dz.$$
This means that $\sqrt{c_j}\le C/2^{j/2(1+2\varkappa)}$ and assumption~(\ref{sum_cj}) is satisfied.
\end{expl}

\section{Application to fractional Brownian motion}
In this section we show how to adjust the main theorem to the fractional Brownian motion case.

Let $W_{\alpha}(t),$ $t\in\mathbb R,$ be a separable centered Gaussian random process such that $W_{\alpha}(-t)=W_{\alpha}(t)$ and its covariance function is
\begin{equation}\label{cov}
R(t,s)=\mathbf E
W_{\alpha}(t)W_{\alpha}(s)=
\frac12\left(|t|^{\alpha}+|s|^{\alpha}-||t|-|s||^{\alpha}\right),\ 0<{\alpha}<2.
\end{equation}

\begin{lem}\label{lem4}
If assumption $S(\gamma),$ $0<\gamma<1,$ holds true and for some $\alpha>0$
$$c_\psi:=\int_{\mathbb R}|u|^{\alpha}|{\psi}(u)|\,du<\infty,$$
then for the coefficients of the process $W_{\alpha}(t),$ defined by {\rm(\ref{intksieta})},
\begin{eqnarray*}\left|\mathbf E\eta_{jk}\overline{\eta_{jl}}\right|\le \frac C{2^{j(1+\alpha)}}\end{eqnarray*}
for all $k,l\in \mathbb Z.$
\end{lem}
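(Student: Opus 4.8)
The plan is to compute $\mathbf E\eta_{jk}\overline{\eta_{jl}}$ by writing out the double integral against the covariance function~(\ref{cov}) and exploiting the fact that the $m$-wavelet $\psi$ has vanishing integral, $\int_{\mathbb R}\psi(u)\,du=\widehat{\psi}(0)=0$. Starting from~(\ref{intksieta}) with $\psi_{jk}(u)=2^{j/2}\psi(2^ju-k)$, a change of variables $u\mapsto 2^{-j}(u+k)$, $v\mapsto 2^{-j}(v+l)$ reduces the estimate to an integral of the form
$$\mathbf E\eta_{jk}\overline{\eta_{jl}}=2^{-j}\int_{\mathbb R}\int_{\mathbb R}R\!\left(2^{-j}(u+k),2^{-j}(v+l)\right)\psi(u)\,\overline{\psi(v)}\,du\,dv.$$
Using the scaling property of the fractional-Brownian covariance, $R(2^{-j}a,2^{-j}b)=2^{-j\alpha}R(a,b)$, this becomes $2^{-j(1+\alpha)}$ times an integral that no longer depends on $j$. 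So the whole problem collapses to showing that
$$\left|\int_{\mathbb R}\int_{\mathbb R}R(u+k,v+l)\,\psi(u)\,\overline{\psi(v)}\,du\,dv\right|\le C$$
uniformly in $k,l\in\mathbb Z$.

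To control this remaining integral I would substitute the explicit form $R(a,b)=\tfrac12(|a|^\alpha+|b|^\alpha-||a|-|b||^\alpha)$. The terms $|u+k|^\alpha$ and $|v+l|^\alpha$ separately integrate against $\psi(v)$ (resp.\ $\psi(u)$) to zero, since $\int\psi=0$, so only the cross term $-\tfrac12||u+k|-|v+l||^\alpha$ survives. Again using $\int\psi(u)\,du=\int\psi(v)\,dv=0$, one can replace $||u+k|-|v+l||^\alpha$ by a \emph{difference} of such quantities — i.e.\ subtract off a suitable constant (for instance $||k|-|l||^\alpha$, or an appropriate first-order expansion), so that the kernel is replaced by something that grows at most like $|u|^\alpha+|v|^\alpha$ (for $\alpha<1$, using subadditivity $||a|^\alpha-|b|^\alpha|\le|a-b|^\alpha$; for $1\le\alpha<2$, using the Lipschitz-type bound $||a|^\alpha-|b|^\alpha|\le C(1+|a|+|b|)^{\alpha-1}|a-b|$ together with a second subtraction). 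The hypothesis $c_\psi=\int_{\mathbb R}|u|^\alpha|\psi(u)|\,du<\infty$, and hence finiteness of $\int|\psi|$ as well, then bounds the resulting integral by a constant independent of $k,l$ and $j$.

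The main obstacle is the bookkeeping in the last step: making the cancellation rigorous requires choosing the right constant(s) to subtract and carefully splitting into the regimes $0<\alpha<1$ and $1\le\alpha<2$, because the elementary inequality used to bound $\bigl||a|^\alpha-|b|^\alpha\bigr|$ differs in the two cases, and for $\alpha\ge1$ one extra round of "integrate against the zero-mean $\psi$" is needed to kill the linear growth. Once the correct telescoping is set up, each piece is dominated by $\int\!\!\int(|u|^\alpha+|v|^\alpha+1)|\psi(u)||\psi(v)|\,du\,dv<\infty$, and assembling the factors $2^{-j}$ and $2^{-j\alpha}$ yields the claimed bound $\left|\mathbf E\eta_{jk}\overline{\eta_{jl}}\right|\le C\,2^{-j(1+\alpha)}$. (Note that, combined with~(\ref{sum_cj}), this $c_j\sim 2^{-j(1+\alpha)}$ is exactly what is needed to apply Theorem~\ref{mainuniform} to $W_\alpha$, which is presumably the point of the lemma.)
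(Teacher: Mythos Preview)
Your strategy is sound and will give the bound, but it is considerably more laborious than what the paper does. The paper's first move is to apply Cauchy--Schwarz, $|\mathbf E\eta_{jk}\overline{\eta_{jl}}|\le(\mathbf E|\eta_{jk}|^2)^{1/2}(\mathbf E|\eta_{jl}|^2)^{1/2}$, and then estimate only the \emph{diagonal} quantity $\mathbf E|\eta_{jk}|^2$. After the same change of variables you use, the cross term in the covariance becomes $\bigl||u+k|-|v+k|\bigr|^{\alpha}$ with the \emph{same} shift $k$ in both arguments; the reverse triangle inequality then gives $\bigl||u+k|-|v+k|\bigr|\le|u-v|$, so this term is bounded by $q_\alpha(|u|^\alpha+|v|^\alpha)$ uniformly in $k$, with no case split on~$\alpha$ and no need to subtract constants. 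The two remaining terms $|u+k|^{\alpha}$, $|v+k|^{\alpha}$ are disposed of exactly as you say, via $\int\psi=0$.

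By contrast, working directly with $k\ne l$ forces you to control $\bigl||u+k|-|v+l|\bigr|^{\alpha}$, where the $k$-dependence does not cancel by a single reverse-triangle step. Your plan --- subtract $||k|-|l||^\alpha$ for $0<\alpha\le1$, and pass to a second-order difference $F(a,b)-F(a,b_0)-F(a_0,b)+F(a_0,b_0)$ for $1<\alpha<2$ --- does go through (the second difference is $O(|u|^{\alpha-1}|v|)$ by the $(\alpha-1)$-H\"older regularity of $t\mapsto|t|^{\alpha-1}\operatorname{sgn}t$, and the needed moments of $|\psi|$ follow from $c_\psi<\infty$ and $S(\gamma)$). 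So both routes arrive at $C\cdot2^{-j(1+\alpha)}$; the paper just buys simplicity with one Cauchy--Schwarz at the outset.
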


\begin{proof}
Since  $\left|\mathbf E\eta_{jk}\overline{\eta_{jl}}\right|\le\left(\mathbf
E|\eta_{jk}|^2\right)^{\frac12}\left(\mathbf
E|\eta_{jl}|^2\right)^{\frac12},$ it is sufficient to estimate $\mathbf
E|\eta_{jk}|^2.$

By (\ref{psi_jk}) and (\ref{cov}) we obtain
\begin{eqnarray}\label{E|eta_jk|^2}\mathbf
E|\eta_{jk}|^2&=&\int_{\mathbb R}\int_{\mathbb
R}R(t,s)\
\overline{\psi_{jk}(t)}\psi_{jk}(s)\,dt ds =\frac 1{2^j}\int_{\mathbb R}\int_{\mathbb
R}R\left(\frac u{2^j}, \frac v{2^j}\right)\overline{\psi(u-k)}\psi(v-k)\,du\,dv\nonumber\\
&=&\frac 1{2^j}\int_{\mathbb R}\int_{\mathbb
R}\frac12\left(\left|\frac u{2^j}\right|^{\alpha}+\left|\frac
v{2^j}\right|^{\alpha}-\left|\left|\frac u{2^j}\right|-\left|\frac
v{2^j}\right|\right|^{\alpha}\right)\overline{\psi(u-k)}\psi(v-k)\,du\,dv\nonumber\\
&\le&\frac 1{2^{1+j(1+\alpha)}}\left(\int_{\mathbb
R}\int_{\mathbb R}\left|
u\right|^{\alpha}\overline{\psi(u-k)}\psi(v-k)\,du\,dv\right.\nonumber\\&+&\int_{\mathbb
R}\int_{\mathbb R}\left|
v\right|^{\alpha}\overline{\psi(u-k)}\psi(v-k)\,du\,dv\nonumber\\&+&\left.\int_{\mathbb
R}\int_{\mathbb R}\left|\left| u\right|-\left|
v\right|\right|^{\alpha}\overline{\psi(u-k)}\psi(v-k)\,du\,dv\right)=:\frac {z_1+z_2+z_3}{2^{1+j(1+\alpha)}}.\end{eqnarray}
It follows from assumption $S(\gamma)$  that
$$N:=\int_{\mathbb R}|{\psi}(u)|\,du<\infty.$$
Hence, we get the estimate
\begin{eqnarray}\label{z_1} z_1&\le& \int_{\mathbb R}\int_{\mathbb
R}\left|
u\right|^{\alpha}|{\psi(u-k)}||\psi(v-k)|\,du\,dv=\int_{\mathbb
R}|\psi(v)|\,dv \nonumber\\
&\times&\int_{\mathbb
R}\left| u+k\right|^{\alpha}|{\psi(u)}|\,du\le
q_{\alpha}{N}\left(N|k|^{\alpha}+c_{\psi}\right)<\infty,\end{eqnarray}
where $q_{\alpha}$ is defined by (\ref{c_alpha}).

Using Fubini's theorem and $\int_{\mathbb R} \psi(u)\,du=0$ we obtain
\begin{eqnarray*}z_1=\int_{\mathbb R}\left| u\right|^{\alpha}\overline{\psi(u-k)}\,du\int_{\mathbb
R}\psi(v-k)\,dv=0.
\end{eqnarray*}
Similarly, $z_2=0.$

Finally, we estimate $z_3$ as follows
\[z_3\le\int_{\mathbb
R}\int_{\mathbb R}\left|\left| u+k\right|-\left|
v+k\right|\right|^{\alpha}|{\psi(u)}||\psi(v)|\,du\,dv.
\]

By the reverse triangle inequality and (\ref{c_alpha}) we obtain
$$\left|\left| u+k\right|-\left|
v+k\right|\right|^{\alpha}\le  q_{\alpha}\left(\left|
u\right|^{\alpha}+\left| v\right|^{\alpha}\right).$$

Hence,
\begin{eqnarray}\label{z_3}|z_3|&\le&q_{\alpha} \ 2^{\alpha+1}\int_{\mathbb
R}\left| u\right|^{\alpha}|{\psi(u)}|\,du\int_{\mathbb
R}|\psi(v)|\,dv=q_{\alpha} c_{\psi}\ 2^{\alpha+1}
N,
\end{eqnarray}
which completes the proof of the lemma.

\end{proof}

In some case, for example, for the fractional Brownian motion
the assumption $|\mathbf E\xi_{0k}\overline{\xi_{0l}}|\le b_0$ of
Theorem~\ref{mainuniform} doesn't hold true. The following
theorem gives the uniform convergence of wavelet expansions without this assumption.

\begin{thm}\label{th5}
Let  a random process $\mathbf{X}(t),$ $t\in\mathbb R,$  the $f$-wavelet $\phi,$
and the corresponding $m$-wavelet $\psi$  satisfy the assumptions
of Theorem~{\rm\ref{8103}} and assumptions {\rm(i)} and {\rm(ii)} of Theorem~{\rm\ref{mainuniform}}.

Suppose that there exist
\begin{enumerate}
  \item[{\rm(iii')}]
constants  $c_j,$ $j\in \mathbb N_0,$ such that $\mathbf
E|\eta_{jk}|^2\le c_j$  for all $k\in\mathbb Z$ and
{\rm(\ref{sum_cj})} holds true;
\item[{\rm(iv)}]  some $\varepsilon>0$ such that
 $$S:=\mathbf{E}\left|\sum\limits_{|k|<|k_0'|}\xi_{0k}(\phi_{0k}(t)-\phi_{0k}(s))\right|^2
 \le
\frac
C{\left(\ln\left(e^{\alpha}+\frac1{|t-s|}\right)\right)^{2\alpha(1-\gamma)}},\
\ \alpha>1/2(1-\gamma),$$
if $|t-s|<\varepsilon.$
\end{enumerate}
Then $\mathbf{X}_{n,\mathbf{k}_n}(t)\to \mathbf{X}(t)$
uniformly in probability on each interval $[0,T]$ when  $n\to\infty,$
$k_0'\to \infty,$  and $k_j\to\infty$ for all $j\in \mathbb N_0\,.$
\end{thm}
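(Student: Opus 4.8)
The plan is to follow the proof of Theorem~\ref{mainuniform} almost verbatim, after observing that the \emph{only} place in that argument where the hypothesis $\mathbf E|\xi_{0k}|^2\le b_0$ entered was the estimate of the father-wavelet term $S$, and that assumption~(iv) supplies precisely such an estimate directly. By Theorem~\ref{213} and Remark~\ref{772} it suffices to prove (a) that for every fixed $t\in[0,T]$ one has $\mathbf{X}_{n,\mathbf{k}_n}(t)\to\mathbf{X}(t)$ in mean square, and (b) that there exist $C>0$ and $\beta>1/2$, independent of $n$ and $\mathbf k_n$, with
$$\left(\mathbf E\left|\mathbf{X}_{n,\mathbf{k}_n}(t)-\mathbf{X}_{n,\mathbf{k}_n}(s)\right|^2\right)^{1/2}\le \frac{C}{\left(\ln\left(e^{\alpha}+\frac{1}{|t-s|}\right)\right)^{\beta}}$$
for all sufficiently close $t,s$. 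Each $\mathbf{X}_{n,\mathbf{k}_n}$ is Gaussian, being a finite linear combination of the jointly Gaussian coefficients $\xi_{0k},\eta_{jk}$, and has continuous (hence separable) trajectories since $\phi,\psi$ are continuous; so Theorem~\ref{213} indeed applies, with $\sigma(h)=C\left(\ln(e^{\alpha}+1/h)\right)^{-\beta}$ near the origin.

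Claim~(a) is immediate: since $\gamma<1$, assumption $S(\gamma)$ implies $S(1)$ (by the Remark following the definition of $S'(\gamma)$), so all hypotheses of Theorem~\ref{8103} hold, and that theorem, which makes no use of a uniform bound on $\mathbf E|\xi_{0k}|^2$, yields the mean-square convergence. For claim~(b) I would decompose exactly as in the proof of Theorem~\ref{mainuniform}, writing the left-hand side as $\sqrt S+\sum_{j=0}^{n-1}\sqrt{S_j}$ with $S$ the father-wavelet contribution and $S_j$ the $j$-th mother-wavelet contribution. Using $\left|\mathbf E\eta_{jk}\overline{\eta_{jl}}\right|\le c_j$ (Cauchy--Schwarz together with (iii$'$)) and Lemma~\ref{lemsumdelta} applied to $\psi$, one gets for $j\ge1$
$$S_j\le 2^{j}j^{2\alpha(1-\gamma)}c_jL^2\left(\ln\left(e^{\alpha}+\frac{1}{|t-s|}\right)\right)^{-2\alpha(1-\gamma)},$$
with $L$ the constant of Lemma~\ref{lemsumdelta}, the $j=0$ term being controlled in the same way through~(\ref{sumdelta0k-}); summing and invoking~(\ref{sum_cj}) turns $\sum_{j=0}^{n-1}\sqrt{S_j}$ into $C'\left(\ln(e^{\alpha}+1/|t-s|)\right)^{-\alpha(1-\gamma)}$ with $C'$ finite and independent of $n$ and $\mathbf k_n$. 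The term $S$ is now not derived from an $\mathbf E|\xi_{0k}|^2$-bound but read off directly from hypothesis~(iv): $\sqrt S\le\sqrt C\left(\ln(e^{\alpha}+1/|t-s|)\right)^{-\alpha(1-\gamma)}$ for $|t-s|<\varepsilon$. Adding the two pieces gives~(b) with $\beta=\alpha(1-\gamma)$, which exceeds $1/2$ precisely by condition~(ii).

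Since all the analytic substance already resides in Lemmata~\ref{lem1}--\ref{lemsumdelta} and in the proof of Theorem~\ref{mainuniform}, no genuinely new difficulty arises: the proof is a bookkeeping exercise that isolates the single step using the dropped hypothesis. The one point worth a word of care is that~(iv), and hence the increment estimate, is asserted only for $|t-s|<\varepsilon$; this is harmless, because Theorem~\ref{213} requires $\sigma$ to be monotone and to satisfy the entropy integral~(\ref{142}) only in a neighbourhood of $0$, so one takes $\sigma(h):=C\left(\ln(e^{\alpha}+1/h)\right)^{-\alpha(1-\gamma)}$ for $h<\varepsilon$ and continues it monotonically for $h\ge\varepsilon$. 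One should also confirm that the constant in~(b) depends neither on $n$ nor on the truncation levels $\mathbf k_n$; this holds because the partial sums $\sum_{j=0}^{n-1}\sqrt{S_j}$ are dominated by the convergent series~(\ref{sum_cj}), and the bounds on $S_j$ and $S$ involve only the $c_j$ and the constant from~(iv), never the $k_j$.
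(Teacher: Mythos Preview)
Your proposal is correct and takes exactly the same approach as the paper, whose proof consists of the single sentence ``The assertion of the theorem follows from the proof of Theorem~\ref{mainuniform}.'' You have simply spelled out in detail why that proof goes through once assumption~(iv) replaces the estimate of $S$ that previously relied on the bound $\mathbf E|\xi_{0k}|^2\le b_0$, and your handling of the auxiliary points (Gaussianity of $\mathbf{X}_{n,\mathbf{k}_n}$, the restriction $|t-s|<\varepsilon$, uniformity of the constant in $n$ and $\mathbf k_n$) is accurate.
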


\begin{proof}
 The assertion of the theorem follows from the proof of Theorem~\ref{mainuniform}.
\end{proof}

Now, under some mild additional conditions on  the $f$-wavelet $\phi,$ we show that estimate (iv) holds true in the fractional Brownian
 motion case.

\begin{lem}\label{lem}
Let   the $f$-wavelet $\phi$ satisfy the assumptions
of Theorem~{\rm\ref{8103}},
\begin{itemize}
\item $\widehat{\phi}(z)\to 0$ and $\widehat{\phi}'(z)\to 0,$ when $z\to\pm\infty;$
\item the  integrals $\int_{\mathbb
R}|u|^{\alpha}|\phi(u)|,$
$\int_{\mathbb
R}|\widehat{\phi}'(u)|\,du$ and
$\int_{\mathbb
R}\ln^{\alpha}(1+|u|)|\widehat{\phi}^{(i)}(u)|\,du,$ $i=0,1,2,$ converge for some
$\alpha>1/{2(1-\gamma)}.$
\end{itemize}
Then assumption {\rm(iv)} of Theorem~{\rm\ref{th5}} holds true for $W_{\alpha}(t).$
\end{lem}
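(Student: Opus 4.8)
The plan is to bound the quantity $S=\mathbf{E}\bigl|\sum_{|k|\le k_0'}\xi_{0k}(\phi_{0k}(t)-\phi_{0k}(s))\bigr|^2$ for the fractional Brownian motion $W_\alpha$ in exactly the same style as the bound on $S_j$ in the proof of Theorem~\ref{mainuniform}, the only new ingredient being that here $\widehat{\phi}(0)\neq 0$, so one cannot gain a factor from $\widehat\phi$ vanishing at the origin (as was done for $\widehat\psi$ in the Examples section and in Lemma~\ref{lem4}). First I would write $S=\sum_{|k|\le k_0'}\sum_{|l|\le k_0'}\mathbf{E}\xi_{0k}\overline{\xi_{0l}}\,(\phi_{0k}(t)-\phi_{0k}(s))(\phi_{0l}(t)-\phi_{0l}(s))$ and, using the Cauchy--Schwarz-type inequality $|\mathbf{E}\xi_{0k}\overline{\xi_{0l}}|\le(\mathbf{E}|\xi_{0k}|^2)^{1/2}(\mathbf{E}|\xi_{0l}|^2)^{1/2}$, reduce matters to estimating $\mathbf{E}|\xi_{0k}|^2$ and then invoking Lemma~\ref{lemsumdelta} to control $\sum_{k}|\phi_{0k}(t)-\phi_{0k}(s)|$ by a constant times $(\ln(e^\alpha+1/|t-s|))^{-\alpha(1-\gamma)}$; assumptions $S(\gamma)$, (i) and (ii) of Theorem~\ref{mainuniform} supply precisely the hypotheses needed for that lemma applied to $\delta=\phi$.

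The real work is estimating $\mathbf{E}|\xi_{0k}|^2$ for $W_\alpha$ and showing it grows at most polynomially in $|k|$ — slowly enough that, after multiplication by the square of the logarithmic factor coming from Lemma~\ref{lemsumdelta} and using that only $|k|\le k_0'$ with $k_0'$ fixed for a given pair $(t,s)$ actually... no: rather, the double sum has $O((k_0')^2)$ terms, but each $|\phi_{0k}(t)-\phi_{0k}(s)|$ is also small, so I should keep the sum-over-$k$ bound from Lemma~\ref{lemsumdelta} intact instead of crudely counting terms. Proceeding as in Lemma~\ref{lem4}: $\mathbf{E}|\xi_{0k}|^2=\int_{\mathbb{R}}\int_{\mathbb{R}}R(t,s)\overline{\phi(t-k)}\phi(s-k)\,dt\,ds$ with $R$ given by~(\ref{cov}); the terms $|t|^\alpha$ and $|s|^\alpha$ do \emph{not} vanish now because $\int_{\mathbb{R}}\phi(u)\,du=\widehat\phi(0)\neq 0$, so $\mathbf{E}|\xi_{0k}|^2$ genuinely contains a contribution of order $|k|^\alpha$. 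This is why one cannot take a single constant $b_0$, and why the conclusion must be phrased as the weaker estimate~(iv) rather than hypothesis~(iii) of Theorem~\ref{mainuniform}. The polynomial-in-$|k|$ growth of $\mathbf{E}|\xi_{0k}|^2$, combined with Lemma~\ref{lem2}'s pointwise bound $|\phi_{0k}(t)-\phi_{0k}(s)|\le R_{0\alpha}\ln^{-\alpha}(e^\alpha+1/|t-s|)$ and the decay of $\phi$ itself (assumption $S(\gamma)$ gives $|\phi(x-k)|\le\Phi(|x-k|)$), must be shown to sum to something bounded uniformly in $k_0'$; this is where the extra integrability hypotheses $\int_{\mathbb{R}}|u|^\alpha|\phi(u)|\,du<\infty$ and the conditions on $\widehat\phi',\widehat\phi''$ enter.

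Concretely, I expect the argument to split $S$ as in Lemma~\ref{lem4}: the diagonal-type contributions from $|t|^\alpha+|s|^\alpha$ and the cross term $-\,||t|-|s||^\alpha$. Unlike the mother-wavelet case, the first two do not cancel, so I would instead use a Fourier/Parseval representation of $\xi_{0k}$ together with $\widehat\phi(z)\to0$, $\widehat\phi'(z)\to0$ as $z\to\pm\infty$ and the integrability of $\widehat\phi^{(i)}$, $i=0,1,2$, to integrate by parts twice and extract decay in $z$; the $\ln^\alpha(1+|u|)$-integrability of these derivatives is exactly what is needed so that, after the change of variables $u=2x-1$ type manipulations, the logarithmic factor $\ln^{-\alpha(1-\gamma)}$ survives with the correct exponent. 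The main obstacle is bookkeeping: matching the polynomial growth of $\mathbf{E}|\xi_{0k}|^2$ in $|k|$ against the decay of $\sum_k|\phi_{0k}(t)-\phi_{0k}(s)|$ so that the product is summable and yields a bound of the required form $C\,(\ln(e^\alpha+1/|t-s|))^{-2\alpha(1-\gamma)}$ uniformly in $k_0'$ — in other words, verifying that the $|k|^\alpha$-type growth is absorbed by $\Phi^\gamma(|k|)$-type decay under the stated integrability assumptions. Once that estimate is in hand, it is exactly assumption~(iv) of Theorem~\ref{th5}, and the lemma is proved.
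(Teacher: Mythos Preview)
Your overall architecture is right: expand $S$ as a double sum, apply Cauchy--Schwarz to reduce to $(\mathbf E|\xi_{0k}|^{2})^{1/2}$, compute $\mathbf E|\xi_{0k}|^{2}$ from the covariance~(\ref{cov}) (and indeed the $|t|^{\alpha}$, $|s|^{\alpha}$ terms no longer vanish, giving growth of order $|k|^{\alpha/2}$ in $(\mathbf E|\xi_{0k}|^{2})^{1/2}$), and then balance this growth against decay in the increments $|\phi_{0k}(t)-\phi_{0k}(s)|$. Where you go astray is in locating the source of that decay.

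Lemma~\ref{lemsumdelta} will not save you: it bounds the \emph{unweighted} sum $\sum_{k}|\phi_{0k}(t)-\phi_{0k}(s)|$, so once you insert the weights $(\mathbf E|\xi_{0k}|^{2})^{1/2}\sim C|k|^{\alpha/2}$ the sum diverges. Your fallback idea of absorbing $|k|^{\alpha/2}$ by $\Phi^{\gamma}(|k|)$-type decay of $\phi$ would require $\int|x|^{\alpha/2}\Phi^{\gamma}(|x|)\,dx<\infty$, which is not among the hypotheses. And the integration by parts does not go into a ``Fourier/Parseval representation of $\xi_{0k}$'' --- $\xi_{0k}$ is a random variable, and Parseval on $\mathbf E|\xi_{0k}|^{2}$ buys you nothing here because the covariance is not stationary. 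The correct place to integrate by parts is the Fourier integral for the \emph{increment} itself:
\[
\phi_{0k}(t)-\phi_{0k}(s)=\frac{1}{2\pi}\int_{\mathbb R}\bigl(e^{itz}-e^{isz}\bigr)\,\widehat{\phi}(z)\,e^{-ikz}\,dz,
\]
and one integrates by parts twice against $e^{-ikz}$. The boundary terms vanish precisely because $\widehat{\phi}(z)\to 0$ and $\widehat{\phi}'(z)\to 0$ at $\pm\infty$, and the resulting integrand involves $\widehat{\phi}$, $\widehat{\phi}'$, $\widehat{\phi}''$ multiplied by factors bounded via $|e^{itz}-e^{isz}|$, $|te^{itz}-se^{isz}|$, $|t^{2}e^{itz}-s^{2}e^{isz}|$. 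Those last quantities are controlled by the logarithmic inequality~(\ref{sinln}) (and its polynomial variants), which is exactly why the $\ln^{\alpha}(1+|u|)$-integrability of $\widehat{\phi}^{(i)}$, $i=0,1,2$, is assumed. The net effect is
\[
|\phi_{0k}(t)-\phi_{0k}(s)|\le \frac{B_{\phi,\alpha,T}}{k^{2}\,\ln^{\alpha}\!\bigl(e^{\alpha}+\tfrac{1}{|t-s|}\bigr)},\qquad k\neq 0,
\]
and since $0<\alpha<2$ the series $\sum_{k\neq 0}|k|^{\alpha/2}/k^{2}$ converges, yielding~(iv) with room to spare (in fact with exponent $2\alpha$ rather than merely $2\alpha(1-\gamma)$). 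This $1/k^{2}$ mechanism, not $\Phi^{\gamma}$-decay, is the missing idea.
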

\begin{proof}
$S$ can be bounded as follows
 $$S\le \sum\limits_{|k|\le k_0}\sum\limits_{|l|\le k_0}|\mathbf E\xi_{0k}\overline{\xi_{0l}}|
 |\phi_{0k}(t)-\phi_{0k}(s)||\phi_{0l}(t)-\phi_{0l}(s)|\,.$$
Since $\left|\mathbf
E\xi_{0k}\overline{\xi_{0l}}\right|\le\left(\mathbf
E|\xi_{0k}|^2\right)^{\frac12}\left(\mathbf
E|\xi_{0l}|^2\right)^{\frac12},$ it is sufficient to estimate $\mathbf
E|\xi_{0k}|^2.$

Similarly to (\ref{E|eta_jk|^2}) we get
 \begin{eqnarray*}\mathbf
E|\xi_{0k}|^2&\le&\int_{\mathbb
R}\int_{\mathbb R}\left|
u\right|^{\alpha}\overline{\phi(u-k)}\phi(v-k)\,du\,dv+\int_{\mathbb
R}\int_{\mathbb R}\left|
v\right|^{\alpha}\overline{\phi(u-k)}\phi(v-k)\,du\,dv\nonumber\\&+&\int_{\mathbb
R}\int_{\mathbb R}\left|\left| u\right|-\left|
v\right|\right|^{\alpha}\overline{\phi(u-k)}\phi(v-k)\,du\,dv.
\end{eqnarray*}
Analogously to (\ref{z_1}) and (\ref{z_3}) we obtain
$$\int_{\mathbb
R}\int_{\mathbb R}\left|
u\right|^{\alpha}|\overline{\phi(u-k)}\phi(v-k)|\,du\,dv\le
q_{\alpha}{M}\left(M|k|^{\alpha}+c_{\phi}\right),$$
$$\int_{\mathbb
R}\int_{\mathbb R}\left|\left| u\right|-\left|
v\right|\right|^{\alpha}|\overline{\phi(u-k)}\phi(v-k)|\,du\,dv\le q_{\alpha}c_{\phi}M \, 2^{\alpha+1},
$$
where
$$M:=\int_{\mathbb
R} |\phi(u)|\,du<\infty, \ c_{\phi}:=\int_{\mathbb
R}|u|^{\alpha} |\phi(u)|\,du<\infty.$$

Then
\begin{eqnarray}\label{Exi0kl}\left|\mathbf
E\xi_{0k}\overline{\xi_{0l}}\right|&&\le 2q_{\alpha}\
{M}\left(M|k|^{\alpha}+c_{\phi}(2^{\alpha}+1)\right)^{\frac12}
\left(M|l|^{\alpha}+c_{\phi}(2^{\alpha}+1)\right)^{\frac12}\nonumber\\
&&\le 2q_{\alpha}\
{M}\left(M|k|^{\frac{\alpha}2}+\sqrt{c_{\phi}2^{\alpha+1}}\right)
\left(M|l|^{\frac{\alpha}2}+\sqrt{c_{\phi}2^{\alpha+1}}\right).\nonumber\end{eqnarray}

To estimate  $|{\phi}_{0k}(t)-{\phi}_{0k}(s)|,$ we
use the representations
$$\widehat{\phi}_{0k}(z)={e^{-i
kz}}\cdot\widehat{\phi}\left( z\right),
$$
$${\phi}_{0k}(t)=\frac1{2\pi}\int_{\mathbb R}
 {e^{itz}e^{-ikz}}\widehat{\phi}\left(z\right)\,dz.$$
Repeatedly using integration by parts and
the assumptions of the lemma, we obtain that for $k\not= 0:$
\begin{eqnarray}\label{phi0kt-phi0ks} &&|{\phi}_{0k}(t)-{\phi}_{0k}(s)|=\left|\frac1{2i
 k\pi}\int_{\mathbb R}
 \left(e^{itz}-e^{isz}\right)\widehat{\phi}\left(z\right)\,d(e^{-ikz})\right|\nonumber\\
 &&=\frac1{2\pi k^2}\left|\int_{\mathbb R}\left[
 i\left(te^{itz}-se^{isz}\right)\widehat{\phi}(z)+\left(e^{itz}-e^{isz}\right)\widehat{\phi}'(z)\right]\,d(e^{-ikz})\right|
\nonumber \\
 &&=\frac1{2\pi k^2}\left|\int_{\mathbb R}\left[
 -\left(t^2e^{itz}-s^2e^{isz}\right)\widehat{\phi}(z)+2i\left(te^{itz}-se^{isz}\right)
 \widehat{\phi}'(z)\right.\right.\nonumber\\&&+\left.
 \left(e^{itz}-e^{isz}\right)\widehat{\phi}''(z)\right]e^{-ikz}\,dz\left|
 \le\frac1{2\pi k^2}\right.\left(\int_{\mathbb R}
 \left|t^2e^{itz}-s^2e^{isz}\right||\widehat{\phi}(z)|\,dz\right.\nonumber\\&&+\left.2\int_{\mathbb
 R}\left|te^{itz}-se^{isz}\right|
| \widehat{\phi}'(z)|\,dz+\int_{\mathbb R}
 \left|e^{itz}-e^{isz}\right||\widehat{\phi}''(z)|\,dz\right).
\end{eqnarray}

By inequalities (8) and (12) given in \cite{kozol1} we get
\begin{eqnarray*}\label{te-se}|te^{itz}-se^{isz}|&\le&|t-s|+t|e^{itz}-e^{isz}|
\\&\le&\frac{c_{\alpha,T}}{\ln^{\alpha}\left(e^{\alpha}+
\frac1{|t-s|}\right)}+2T\left(\frac{\ln\left(e^{\alpha}+
\frac{|z|}2\right)}{\ln\left(e^{\alpha}+
\frac1{|t-s|}\right)}\right)^{\alpha},
\end{eqnarray*}
\begin{eqnarray*}\label{t2e-s2e}|t^2e^{itz}-s^2e^{isz}|&\le&|t^2-s^2|+t^2|e^{itz}-e^{isz}|
\\&\le&\frac{\tilde{c}_{\alpha,T}}{\ln^{\alpha}\left(e^{\alpha}+
\frac1{|t-s|}\right)}+2T^2\left(\frac{\ln\left(e^{\alpha}+
\frac{|z|}2\right)}{\ln\left(e^{\alpha}+
\frac1{|t-s|}\right)}\right)^{\alpha},
\end{eqnarray*}
where $c_{\alpha,T}$ and $\tilde{c}_{\alpha,T}$ are constants which do not depend on $t,$ $s$ and $z.$

Applying these inequalities to (\ref{phi0kt-phi0ks}) we obtain
\[|{\phi}_{0k}(t)-{\phi}_{0k}(s)|
 \le\frac{B_{\phi,\alpha,T}}{k^2\ln^{\alpha}\left(e^{\alpha}+
\frac1{|t-s|}\right)},\quad k\not= 0,
\]
where
\begin{eqnarray*}B_{\phi,\alpha,T}&:=&\frac1{2\pi}\left(\tilde{c}_{\alpha,T}\int_{\mathbb R}
 |\widehat{\phi}(z)|\,dz+2T^2\int_{\mathbb R}\ln^{\alpha}\left(e^{\alpha}+
\frac{|z|}2\right)
 |\widehat{\phi}(z)|\,dz\right.\nonumber\\&+&2c_{\alpha,T}\int_{\mathbb R}
 |\widehat{\phi}'(z)|\,dz+4T^2\int_{\mathbb R}\ln^{\alpha}\left(e^{\alpha}+
\frac{|z|}2\right)
 |\widehat{\phi}'(z)|\,dz\nonumber\\ &+&\left.2\int_{\mathbb R}\ln^{\alpha}\left(e^{\alpha}+
\frac{|z|}2\right)
 |\widehat{\phi}''(z)|\,dz\right).
\end{eqnarray*}

If $k=0,$ then
\[|{\phi}_{00}(t)-{\phi}_{00}(s)|\le\frac1{2
 \pi}\int_{\mathbb R}
 \left|e^{itz}-e^{isz}\right|\,|\widehat{\phi}\left(z\right)|\,dz\le\frac{B_{0,\alpha,T}}{\ln^{\alpha}\left(e^{\alpha}+
 \frac1{|t-s|}\right)},\]
 where
 \[B_{0,\alpha,T}:=\frac1{\pi}\int_{\mathbb R}\ln^{\alpha}\left(e^{\alpha}+
 \frac{|z|}2\right)
  |\widehat{\phi}(z)|\,dz\,.
 \]
Consequently, we can estimate $S$ as follows
$$S \le
\frac
C{\left(\ln\left(e^{\alpha}+\frac1{|t-s|}\right)\right)^{2\alpha(1-\gamma)}},\
\ \alpha>1/2(1-\gamma),$$ where
\[C:=2q_{\alpha} M \Bigg(\sqrt{c_{\phi}2^{\alpha+1}}\,B_{0,\alpha,T}+B_{\phi,\alpha,T}
\sum\limits_{\scriptsize \begin{array}{c}
|k|\le k_0\\
k\not=0
\end{array}}\frac{M|k|^{\frac{\alpha}2}+\sqrt{c_{\phi}2^{\alpha+1}}}{k^2}\Bigg)^2.
\]
\end{proof}
\begin{thm}
If the assumptions of  Lemmata~{\rm\ref{lem4}, \ref{lem}}, and assumptions {\rm(i)} and {\rm(ii)} of Theorem~{\rm\ref{mainuniform}} are satisfied, then the wavelet expansions of the fractional Brownian motion uniformly converge to $W_{\alpha}(t).$
\end{thm}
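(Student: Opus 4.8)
The plan is to deduce the statement directly from Theorem~\ref{th5}, by verifying its hypotheses for the process $W_{\alpha}(t)$. Theorem~\ref{th5} asks for: the assumptions of Theorem~\ref{8103}; assumptions (i) and (ii) of Theorem~\ref{mainuniform}; condition (iii'); and condition (iv). Assumptions (i) and (ii) are imposed in the present statement, so there is nothing to do for them. The wavelet-side requirements of Theorem~\ref{8103} (continuity of $\phi$ and $\psi$, assumption $S(1)$) are contained in the hypotheses of Lemmata~\ref{lem4} and~\ref{lem}: $S(\gamma)$ with $\gamma\in(0,1)$ implies $S(1)$ by monotonicity of assumption $S'(\cdot)$ in $\gamma$.

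Next I would supply the majorant $c$ needed in Theorem~\ref{8103}. Since, by~(\ref{cov}), $R(t,t)=|t|^{\alpha}$ for the fractional Brownian motion, I take $c(t):=(1+t^{2})^{\alpha/4}$. This is even, non-decreasing on $[0,\infty)$, $c(0)=1>0$, and $|R(t,t)|^{1/2}=|t|^{\alpha/2}\le c(t)$. From the elementary inequality $1+a^{2}t^{2}\le(1+a^{2})(1+t^{2})$ one gets $c(at)\le A(a)\,c(t)$ with $A(a):=(1+a^{2})^{\alpha/4}$ for all $a,t\ge 0$, so the scaling condition holds with $a_{0}=1$, $x_{0}=0$. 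The remaining requirement $\int_{\mathbb R}c(x)\Phi(|x|)\,dx<\infty$ is guaranteed by the decay imposed on the wavelets in Lemmata~\ref{lem4} and~\ref{lem} (in particular $\int_{\mathbb R}|u|^{\alpha}(|\phi(u)|+|\psi(u)|)\,du<\infty$), which allows $\Phi$ to be taken with decay faster than $|x|^{-1-\alpha/2}$. Hence all hypotheses of Theorem~\ref{8103} hold for $W_{\alpha}$.

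It then remains to check (iii') and (iv). Condition (iii') comes from Lemma~\ref{lem4}, whose hypotheses ($S(\gamma)$ and $c_{\psi}<\infty$) are part of the present assumptions: it gives $\mathbf E|\eta_{jk}|^{2}\le c_{j}:=C\,2^{-j(1+\alpha)}$ for all $k$, whence
$$\sum_{j=0}^{\infty}\sqrt{c_{j}}\,2^{j/2}j^{\alpha(1-\gamma)}=\sqrt{C}\sum_{j=0}^{\infty}2^{-j\alpha/2}\,j^{\alpha(1-\gamma)}<\infty ,$$
because the geometric factor dominates the polynomial one for every $\alpha>0$. Condition (iv) for $W_{\alpha}$ is exactly the conclusion of Lemma~\ref{lem}. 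Applying Theorem~\ref{th5} then yields the uniform convergence in probability of $\mathbf X_{n,\mathbf k_{n}}(t)$ to $W_{\alpha}(t)$ on each $[0,T]$. I expect the only delicate points to be bookkeeping ones: the symbol $\alpha$ serves simultaneously as the Hurst-type exponent in~(\ref{cov}) and as the logarithmic exponent in (ii), (iii'), so one must keep $\alpha>1/2(1-\gamma)$ throughout; and one must confirm that the chosen $c$ is compatible with the assumed wavelet decay. No new estimate beyond the cited lemmas is required.
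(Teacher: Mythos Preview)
Your proof is correct and follows exactly the route the paper intends: the paper gives no explicit proof, treating the theorem as an immediate corollary of Theorem~\ref{th5} with Lemma~\ref{lem4} supplying condition~(iii') and Lemma~\ref{lem} supplying condition~(iv). Your explicit construction of the majorant $c$ is extra work (and the claim that $\int|u|^{\alpha}|\phi(u)|\,du<\infty$ forces $\Phi$ to decay faster than $|x|^{-1-\alpha/2}$ is not quite justified, since $\Phi$ is fixed by assumption~$S(\gamma)$ rather than freely chosen), but this step is unnecessary anyway: the hypothesis ``assumptions of Lemma~\ref{lem}'' already includes ``$\phi$ satisfies the assumptions of Theorem~\ref{8103}'', so the integrability $\int_{\mathbb R} c(x)\Phi(|x|)\,dx<\infty$ is part of the standing hypotheses rather than something to be derived.
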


\section{Convergence rate in the space $C[0, T]$}
Returning now to the general case introduced in Theorem~\ref{mainuniform}, let us investigate what happens when the number of terms in the approximants~(\ref{x_nk}) becomes large.

First we specify an estimate for the supremum of Gaussian processes.

\begin{defn}{\rm\cite[\S3.2]{bulkoz}} A set $\mathcal{Q}\subset \mathcal{S} \subset \mathbb{R}$ is called an $\varepsilon$-net in the set $\mathcal{S}$ with respect to the semimetric $\rho$ if for any point $x\in \mathcal{S}$ there exists at least one point $y\in \mathcal{Q}$ such that $\rho(x,y)\le \varepsilon.$
\end{defn}

\begin{defn}{\rm\cite[\S3.2]{bulkoz}} Let
$$H_\rho(\mathcal{S},\varepsilon):=\left\{
\begin{array}{ll}
\ln(N_\rho(\mathcal{S},\varepsilon)), & \hbox{if}\ N_\rho(\mathcal{S},\varepsilon)<+\infty; \\
+\infty, & \hbox{if}\ N_\rho(\mathcal{S},\varepsilon)=+\infty,
\end{array}
\right.
$$
where $N_\rho(\mathcal{S},\varepsilon)$ is the number of point in a
minimal $\varepsilon$-net in the set $\mathcal{S}.$

The function $H_\rho(\mathcal{S},\varepsilon),$ $\varepsilon>0,$ is
called the metric entropy of the set $\mathcal{S}.$
\end{defn}

\begin{lem}\label{71}{\rm\cite[(4.10)]{bulkoz}} Let $\mathbf
Y(t),$ $t\in[0,T]$ be a separable Gaussian random process,
$$\varepsilon_0:=\sup_{0\le t\le T}\left(\mathbf
\mathbf
E|\mathbf
Y(t)|^2\right)^{1/2}<\infty\,,$$
\begin{equation}\label{76}I(\varepsilon_0):=\frac1{\sqrt2}\int\limits_{0}^{\varepsilon_0}
\sqrt{H(\varepsilon)}\,d\varepsilon<\infty\,,\end{equation}
where $H(\varepsilon)$ is the metric entropy of the space $([0,
T],\rho),$ $\rho(t,s)=(\mathbf
E|\mathbf
Y(t)-\mathbf
Y(s)|^2)^{1/2}.$

Then
$$P\left\{\sup_{0\le t\le T} |\mathbf
Y(t)|>u \right\}\le
2\exp\left\{-\frac{\left(u-\sqrt{8uI(\varepsilon_0)}\right)^2}{2\varepsilon_0^2}\right\}\,,$$
where $u>8I(\varepsilon_0).$
\end{lem}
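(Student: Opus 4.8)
The plan is to prove the estimate by a dyadic chaining argument: bound $\sup_{0\le t\le T}|\mathbf{Y}(t)|$ by a telescoping sum of increments along a nested family of $\varepsilon$-nets, apply on each scale the elementary one-dimensional Gaussian tail inequality together with a union bound, and then distribute the level $u$ across the scales so as to recover the stated constants.

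First I would reduce to a finite maximum. By separability of $\mathbf{Y}$, $\sup_{0\le t\le T}|\mathbf{Y}(t)|$ coincides a.s.\ with the supremum over a fixed countable dense set $\mathcal{T}\subset[0,T]$, hence with the increasing limit of the maxima over finite sets $\mathcal{T}_1\subset\mathcal{T}_2\subset\cdots$; by monotone convergence of probabilities it suffices to establish the bound for $\max_{t\in\mathcal{T}_m}|\mathbf{Y}(t)|$ uniformly in $m$ and then let $m\to\infty$. Next I fix the scales $\varepsilon_k:=2^{-k}\varepsilon_0$, $k\ge0$, choose a minimal $\varepsilon_k$-net $\mathcal{Q}_k\subset[0,T]$ with $|\mathcal{Q}_k|=N_\rho([0,T],\varepsilon_k)$ and nearest-point projections $\pi_k$ chosen consistently so that $\pi_{k-1}=\pi_{k-1}\circ\pi_k$, and I fix a base point $q^\ast\in[0,T]$, for which $\mathbf{E}|\mathbf{Y}(q^\ast)|^2\le\varepsilon_0^2$. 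Since $\rho(t,\pi_k(t))\le\varepsilon_k$, the consistency gives $\rho(\pi_k(t),\pi_{k-1}(t))\le\varepsilon_{k-1}=2\varepsilon_k$ and $\rho(\pi_0(t),q^\ast)\le 2\varepsilon_0$, and the identity
\[
\mathbf{Y}(t)=\mathbf{Y}(q^\ast)+\bigl(\mathbf{Y}(\pi_0(t))-\mathbf{Y}(q^\ast)\bigr)+\sum_{k\ge1}\bigl(\mathbf{Y}(\pi_k(t))-\mathbf{Y}(\pi_{k-1}(t))\bigr)
\]
holds a.s.; the series converges absolutely a.s.\ because the expected maximum of the level-$k$ increments is $O\bigl(\varepsilon_k\sqrt{H(\varepsilon_k)}\bigr)$ and $\sum_k\varepsilon_k\sqrt{H(\varepsilon_k)}<\infty$, which is the content of $I(\varepsilon_0)<\infty$.

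The triangle inequality then yields $\sup_t|\mathbf{Y}(t)|\le|\mathbf{Y}(q^\ast)|+\sum_{k\ge0}\Delta_k$, where $\Delta_k$ is the maximum of $|\mathbf{Y}(p)-\mathbf{Y}(q)|$ over the at most $N_\rho([0,T],\varepsilon_k)$ links that can occur at level $k$, each a centered Gaussian of variance at most $4\varepsilon_k^2$. Using $P(|\zeta|>x)\le 2\exp(-x^2/2\sigma^2)$ for a centered Gaussian $\zeta$ with $\mathbf{E}\zeta^2\le\sigma^2$, together with the union bound, for any positive $v$ and $(w_k)_{k\ge0}$ with $v+\sum_{k\ge0}w_k\le u$ one obtains
\[
P\Bigl(\sup_{0\le t\le T}|\mathbf{Y}(t)|>u\Bigr)\le 2\exp\Bigl(-\tfrac{v^2}{2\varepsilon_0^2}\Bigr)+\sum_{k\ge0}2\,N_\rho([0,T],\varepsilon_k)\exp\Bigl(-\tfrac{w_k^2}{8\varepsilon_k^2}\Bigr).
\]
Finally I would take $w_k$ proportional to $\varepsilon_k\sqrt{H(\varepsilon_k)}$, with an overall factor tuned to $u$ (and a small geometric correction to guarantee summability at the coarsest scales), so that $\sum_k w_k\le\sqrt{8uI(\varepsilon_0)}$ and both the base term and the whole chaining sum above are at most $\exp\bigl(-(u-\sqrt{8uI(\varepsilon_0)})^2/2\varepsilon_0^2\bigr)$; here one uses that $H(\cdot)$ is non-increasing and $\varepsilon_{k-1}-\varepsilon_k=\varepsilon_k$ to compare $\sum_{k\ge0}\varepsilon_k\sqrt{H(\varepsilon_k)}$ with $\int_0^{\varepsilon_0}\sqrt{H(\varepsilon)}\,d\varepsilon=\sqrt2\,I(\varepsilon_0)$. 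Setting $v:=u-\sum_kw_k\ge u-\sqrt{8uI(\varepsilon_0)}>0$, which is exactly where $u>8I(\varepsilon_0)$ is used, and adding the two contributions gives the asserted bound.

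The step I expect to be the main obstacle is this last allocation: the budget $u$ must be split between the base point and the successive chaining layers with the overall factor in $w_k$ growing like $\sqrt u$, so that the geometric-type series of chaining probabilities is genuinely dominated by the single base term $\exp(-v^2/2\varepsilon_0^2)$; only a careful choice lands exactly on the constant $8$ and the leading factor $2$, a crude one giving the same shape of bound with worse constants. A secondary point requiring care is the a.s.\ absolute convergence of the chaining series and the separability/measurability bookkeeping that makes the telescoping rigorous.
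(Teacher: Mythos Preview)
The paper does not prove this lemma at all: it is quoted verbatim from \cite[(4.10)]{bulkoz} and used as a black box in the proof of Theorem~\ref{th7}. So there is no ``paper's own proof'' to compare against.

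That said, your chaining sketch is the standard route to such entropy bounds for Gaussian suprema and is structurally sound: separability reduction, dyadic nets, telescoping, Gaussian tail plus union bound at each scale, and allocation of the level $u$ between a base point and the chaining layers. This is essentially how the result is obtained in \cite{bulkoz}. Your own caveat is the real one: landing on the precise constants (the $8$ inside the square root and the global factor $2$) requires a specific allocation $w_k$ and a careful comparison of $\sum_k \varepsilon_k\sqrt{H(\varepsilon_k)}$ with $\int_0^{\varepsilon_0}\sqrt{H(\varepsilon)}\,d\varepsilon$, together with an argument that the entire chaining sum is dominated by the single base term rather than merely being of the same order. A generic choice yields the same functional form with worse absolute constants, which would already suffice for every application in this paper (Theorem~\ref{th7} and the ensuing remarks), since only the shape of the bound and the fact that it tends to zero as $\varepsilon_{\mathbf{k}_n}\to 0$ are used there.
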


Assume that there exists a nonnegative monotone nondecreasing in some neighborhood of the origin
function $\sigma(\varepsilon),$ $\varepsilon>0,$ such that
$\sigma(\varepsilon)\to 0$ when $\varepsilon\to 0$ and
\begin{equation}\label{74}\sup\limits_{\scriptsize\begin{array}{c}
                     |t-s|\le \varepsilon\\
                      t,s\in [0, T]
                   \end{array}}\left(\mathbf
                   E|\mathbf
Y(t)-\mathbf
Y(s)|^2\right)^{1/2}\le\sigma(\varepsilon)\,.
\end{equation}
\begin{lem}\label{lem7}{\rm\cite{kozol2}} If
\begin{equation}\label{sigma}\sigma(\varepsilon)=\frac C{\ln^{\beta}\left(e^{\alpha}+
\frac1{\varepsilon}\right)},  \
\beta>1/2,\ \alpha>0\,,\end{equation} then {\rm(\ref{76})} holds true and
$$I(\varepsilon_0)\le \delta(\varepsilon_0):=\frac{\gamma}{\sqrt2}\left(\sqrt{\ln(T+1)}+\left({1-\frac{1}{2\beta}}\right)^{-1}\left(\frac{C}{\gamma}\right)^{\frac{1}{2\beta}}\right),$$
where
$\gamma:=\min\left(\varepsilon_0,\,\sigma\left(\frac{T}{2}\right)\right)\,.$
\end{lem}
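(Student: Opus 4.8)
The plan is to make the entropy integral (\ref{76}) explicit: I would bound the metric entropy $H(\varepsilon)$ of $([0,T],\rho)$ using the modulus estimate (\ref{74}), substitute this bound into the integral, and carry out an elementary integration in which the hypothesis $\beta>1/2$ guarantees integrability at the origin. Both assertions of the lemma — finiteness of (\ref{76}) and the inequality $I(\varepsilon_0)\le\delta(\varepsilon_0)$ — will fall out of the same explicit estimate, so I treat them simultaneously.

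The first step is to see that the integration effectively runs only over $[0,\gamma]$. Taking the single point $s_0=T/2$, every $t\in[0,T]$ satisfies $|t-T/2|\le T/2$, so (\ref{74}) gives $\rho(t,T/2)\le\sigma(T/2)$; hence $\{T/2\}$ is a $\sigma(T/2)$-net and $N_\rho([0,T],\varepsilon)=1$, i.e.\ $H(\varepsilon)=0$, whenever $\varepsilon\ge\sigma(T/2)$. Therefore, regardless of whether $\varepsilon_0\le\sigma(T/2)$ or $\varepsilon_0>\sigma(T/2)$, the integrand vanishes beyond $\sigma(T/2)$ and the integral in (\ref{76}) is supported on $[0,\gamma]$ with $\gamma=\min(\varepsilon_0,\sigma(T/2))$. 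This bookkeeping is what replaces $\varepsilon_0$ by $\gamma$ in the final constant and is the one conceptual point to get right.

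The main step is the entropy bound for small $\varepsilon$. For $\varepsilon\in(0,\gamma)$ set $h:=\sigma^{(-1)}(\varepsilon)$; by (\ref{74}), $|t-s|\le h$ forces $\rho(t,s)\le\sigma(h)=\varepsilon$, so equally spaced points of Euclidean mesh $h$ form a $\rho$-$\varepsilon$-net and $N_\rho([0,T],\varepsilon)\le T/\sigma^{(-1)}(\varepsilon)+1$. Inverting the explicit $\sigma$ of (\ref{sigma}) gives $\sigma^{(-1)}(\varepsilon)=\bigl(\exp((C/\varepsilon)^{1/\beta})-e^\alpha\bigr)^{-1}$, so $1/\sigma^{(-1)}(\varepsilon)\le\exp((C/\varepsilon)^{1/\beta})$ and, using $\exp(\cdot)\ge1$,
$$H(\varepsilon)\le\ln\!\Bigl(T\exp((C/\varepsilon)^{1/\beta})+1\Bigr)\le\ln(T+1)+\Bigl(\tfrac{C}{\varepsilon}\Bigr)^{1/\beta}.$$

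Finally, applying $\sqrt{a+b}\le\sqrt a+\sqrt b$ gives $\sqrt{H(\varepsilon)}\le\sqrt{\ln(T+1)}+(C/\varepsilon)^{1/(2\beta)}$, and integrating term by term over $[0,\gamma]$ yields
$$I(\varepsilon_0)\le\frac1{\sqrt2}\left(\gamma\sqrt{\ln(T+1)}+C^{1/(2\beta)}\int_0^\gamma\varepsilon^{-1/(2\beta)}\,d\varepsilon\right).$$
Since $\beta>1/2$ we have $1/(2\beta)<1$, so the last integral converges and equals $\gamma^{1-1/(2\beta)}/(1-1/(2\beta))$; this simultaneously establishes (\ref{76}). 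Factoring out $\gamma$ then reproduces exactly $\delta(\varepsilon_0)=\frac{\gamma}{\sqrt2}\bigl(\sqrt{\ln(T+1)}+(1-\tfrac1{2\beta})^{-1}(C/\gamma)^{1/(2\beta)}\bigr)$. The only steps requiring care are the truncation to $[0,\gamma]$ via the single-point net and the clean inversion of $\sigma$; the remaining estimates are routine.
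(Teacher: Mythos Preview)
The paper does not actually prove this lemma; it is stated with a citation to \cite{kozol2} and no proof is given in the text. Your argument is correct and is the natural way to establish the bound: reduce the entropy integral to $[0,\gamma]$ via the one-point net $\{T/2\}$, bound $N_\rho([0,T],\varepsilon)$ by the Euclidean covering number $T/\sigma^{(-1)}(\varepsilon)+1$, invert $\sigma$ explicitly, and integrate. Each step checks out, including the key inequality $\ln(T\exp(x)+1)\le\ln(T+1)+x$ (valid since $\exp(x)\ge1$) and the convergence of $\int_0^\gamma\varepsilon^{-1/(2\beta)}\,d\varepsilon$ for $\beta>1/2$, which is exactly where the hypothesis on $\beta$ is used.
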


\begin{lem}\label{lemSgamma2} If a scaling function $\delta(x)$ satisfies assumption $S'(\gamma),$ then
\[\sup\limits_{|x|\le T}\sum\limits_{|k|\ge k_1}|\delta(x-k)|^{\gamma}
\le\int\limits_{k_1-T-1}^{\infty}\Phi^{\gamma}(t)\,dt+\int\limits_{k_1-1}^{\infty}\Phi^{\gamma}(t)\,dt
\]
for  $k_1\ge T+1.$
\end{lem}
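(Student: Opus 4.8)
The plan is to bound the periodic-type tail sum $\sum_{|k|\ge k_1}|\delta(x-k)|^{\gamma}$ uniformly over $|x|\le T$ by exploiting the decreasing majorant $\Phi$ from assumption $S'(\gamma)$ and then comparing the resulting series with an integral, exactly in the spirit of the estimates already used in the proofs of Lemmata~\ref{lem0} and \ref{lem1}. First I would split the sum into its positive and negative parts, $\sum_{k\ge k_1}|\delta(x-k)|^{\gamma}+\sum_{k\le -k_1}|\delta(x-k)|^{\gamma}$, and treat each separately; by the symmetry $\Phi(|x-k|)=\Phi(|{-x}+k|)$ the negative part is handled the same way as the positive one after replacing $x$ by $-x$, which is again in $[-T,T]$.

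For the positive tail, since $|x|\le T$ and $k\ge k_1\ge T+1$ we have $x-k\le T-k\le -1<0$, so $|x-k|=k-x\ge k-T$, and because $\Phi$ is decreasing, $|\delta(x-k)|^{\gamma}\le\Phi^{\gamma}(|x-k|)\le\Phi^{\gamma}(k-T)$. Then the standard integral comparison for a decreasing nonnegative function gives
\[
\sum_{k\ge k_1}\Phi^{\gamma}(k-T)\le\sum_{k\ge k_1}\int_{k-1}^{k}\Phi^{\gamma}(t-T)\,dt=\int_{k_1-1}^{\infty}\Phi^{\gamma}(t-T)\,dt=\int_{k_1-T-1}^{\infty}\Phi^{\gamma}(t)\,dt,
\]
which is exactly the first term on the right-hand side of the claimed bound; the finiteness of this integral follows from assumption $S'(\gamma)$. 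For the negative tail, with $|x|\le T$ and $k\le -k_1$ we get $x-k\ge k_1-T\ge 1>0$, hence $|x-k|=x-k\ge -k-T$... more precisely $|x-k|\ge|k|-T$ when $|k|\ge k_1$; this yields the term $\int_{k_1-T-1}^{\infty}\Phi^{\gamma}(t)\,dt$ once more, so one might expect $2\int_{k_1-T-1}^{\infty}\Phi^{\gamma}(t)\,dt$. To recover the sharper stated form, one of the two tails should instead be bounded more carefully: since the points $x-k$ for $k\ge k_1$ (resp.\ $k\le -k_1$) are all at distance at least $k_1-T$ from the origin but, crucially, they are spaced by $1$, one tail can be majorized by $\int_{k_1-1}^{\infty}\Phi^{\gamma}(t)\,dt$ by shifting the comparison interval appropriately — this is the source of the asymmetry between the two integrals in the statement.

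The main obstacle is precisely this bookkeeping of shifts: getting the exact constants $k_1-T-1$ and $k_1-1$ rather than a looser $2\int_{k_1-T-1}^{\infty}$ requires choosing the integral-comparison intervals on the two sides asymmetrically and verifying that for $|x|\le T$ the nearest of the points $\{x-k:\ |k|\ge k_1\}$ on each side sits at the claimed distance. Everything else — the pointwise domination $|\delta|\le\Phi$, monotonicity of $\Phi$, and the elementary sum-to-integral estimate — is routine and already appears verbatim in the earlier lemmas, so once the interval alignment is pinned down the proof closes immediately. $\qed$
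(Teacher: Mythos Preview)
Your overall strategy---majorize by $\Phi$, split the sum into $k\ge k_1$ and $k\le -k_1$, then compare with an integral---matches the paper's, and your treatment of the positive tail is exactly right. The gap is in your handling of the asymmetry, which you flag as the ``main obstacle'' but do not actually resolve. Your suggestion that ``one tail can be majorized by $\int_{k_1-1}^{\infty}\Phi^{\gamma}(t)\,dt$ by shifting the comparison interval appropriately'' does not work: if you bound each tail \emph{uniformly} over $|x|\le T$ and then add, you inevitably get $|x-k|\ge|k|-T$ on both sides, and no clever choice of integral-comparison intervals can improve $\Phi^{\gamma}(|k|-T)$ to $\Phi^{\gamma}(|k|)$. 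The sharpening must happen at the level of the pointwise estimate, not at the sum-to-integral step.

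The paper's trick is short and worth noting: the majorant
\[
z_{k_1}(x):=\sum_{k\ge k_1}\bigl(\Phi^{\gamma}(|x-k|)+\Phi^{\gamma}(|x+k|)\bigr)
\]
is \emph{even} in $x$, so $\sup_{|x|\le T}z_{k_1}(x)=\sup_{0\le x\le T}z_{k_1}(x)$. Once $x\ge 0$, the two pieces behave differently: $|x-k|=k-x\ge k-T$ (your bound), while $|x+k|=x+k\ge k$ (no loss of $T$). This yields $\sum_{k\ge k_1}\Phi^{\gamma}(k-T)+\sum_{k\ge k_1}\Phi^{\gamma}(k)$, and the integral comparison then gives exactly $\int_{k_1-T-1}^{\infty}+\int_{k_1-1}^{\infty}$. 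Equivalently, without invoking evenness you can observe that for each fixed $x$ one of the two tails gets the tighter bound $|x-k|\ge|k|$ depending on the sign of $x$; the point is that you must bound $\sup_x\bigl(A(x)+B(x)\bigr)$ rather than $\sup_x A(x)+\sup_x B(x)$. Once you insert this observation, your proof closes immediately.
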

\begin{proof}
Since
$$\sum\limits_{|k|\ge k_1}|\delta(x-k)|^{\gamma}\le\sum\limits_{k\ge
k_1}\left(\Phi^{\gamma}(|x+k|)
+\Phi^{\gamma}(|x-k|)\right)=:z_{k_1}(x),$$
$z_{k_1}(x)$ is an even function.

Then the assertion of the theorem follows from
\begin{eqnarray*}\sup\limits_{|x|\le
T}\sum\limits_{|k|\ge k_1}|\delta(x-k)|^{\gamma}\le\sup\limits_{0\le
x\le T}z_{k_1}(x)\le\sum\limits_{k\ge
k_1}\left(\Phi^{\gamma}(k-T)
+\Phi^{\gamma}(k)\right)\\
\le\sum\limits_{k\ge
k_1}\left(\,\int\limits_{k-1}^{k}\Phi^{\gamma}(t-T)\,dt+
\int\limits_{k-1}^{k}\Phi^{\gamma}(t)\,dt\right)\le\int\limits_{k_1-T-1}^{\infty}
\Phi^{\gamma}(t)\,dt+\int\limits_{k_1-1}^{\infty}\Phi^{\gamma}(t)\,dt.
\end{eqnarray*}
\end{proof}

Now we formulate the main result of this section.

\begin{thm}\label{th7} Let  a separable Gaussian random process $\mathbf{X}(t),$ $t\in [0,T],$  the $f$-wavelet $\phi,$
and the corresponding $m$-wavelet $\psi$  satisfy the assumptions of Theorem~{\rm\ref{mainuniform}}.

Then
$$P\left\{\sup_{0\le t\le T} |\mathbf{X}(t)-\mathbf{X}_{n,\mathbf{k}_n}(t)|>u \right\}\le
2\exp\left\{-\frac{(u-\sqrt{8u\delta(\varepsilon_{\mathbf{k}_n})})^2}{2\varepsilon_{\mathbf{k}_n}^2}\right\}\,,$$
where $u>8\delta(\varepsilon_{\mathbf{k}_n})$ and
the decreasing sequence $\varepsilon_{\mathbf{k}_n}$ is defined by {\rm(\ref{varepsilon_k_n})} in the proof of the theorem.
\end{thm}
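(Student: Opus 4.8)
The plan is to apply Lemma~\ref{71} to the tail process $\mathbf{Y}(t) := \mathbf{X}(t) - \mathbf{X}_{n,\mathbf{k}_n}(t)$, which is separable and Gaussian as a difference of such. To invoke the lemma we must produce two ingredients: a bound $\varepsilon_{\mathbf{k}_n}$ on $\sup_{0\le t\le T}(\mathbf{E}|\mathbf{Y}(t)|^2)^{1/2}$, and control of the entropy integral $I(\varepsilon_{\mathbf{k}_n})$ via Lemma~\ref{lem7}. For the latter it suffices to exhibit a majorant $\sigma(\varepsilon)$ of the form~(\ref{sigma}) for the increments $(\mathbf{E}|\mathbf{Y}(t)-\mathbf{Y}(s)|^2)^{1/2}$. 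Here I would reuse essentially verbatim the increment estimate already established inside the proof of Theorem~\ref{mainuniform}: the full series $\mathbf{X}(t)-\mathbf{X}_{n,\mathbf{k}_n}(t)$ consists of the tails $|k|>k_0'$ in the $\phi$-layer, the tails $|k|>k_j$ in layers $j\le n-1$, and all of layers $j\ge n$; applying Lemmata~\ref{lem1}, \ref{lem2}, \ref{lemsumdelta} exactly as before gives
$$\left(\mathbf{E}|\mathbf{Y}(t)-\mathbf{Y}(s)|^2\right)^{1/2}\le \frac{C}{\left(\ln\left(e^{\alpha}+\frac{1}{|t-s|}\right)\right)^{\alpha(1-\gamma)}},$$
with $\beta:=\alpha(1-\gamma)>1/2$, where $C$ is the same finite constant built from $L$, $L_1$, $\sqrt{b_0}$ and $\sum_j\sqrt{c_j}2^{j/2}j^{\alpha(1-\gamma)}$. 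This $\sigma$ is of type~(\ref{sigma}), so Lemma~\ref{lem7} yields $I(\varepsilon_{\mathbf{k}_n})\le\delta(\varepsilon_{\mathbf{k}_n})$, with $\delta$ as defined there.

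The substantive new work is the construction of $\varepsilon_{\mathbf{k}_n}$, the uniform bound on the $L_2(\Omega)$-norm of the tail. Write $\mathbf{Y}(t)=\sum_{|k|>k_0'}\xi_{0k}\phi_{0k}(t)+\sum_{j=0}^{n-1}\sum_{|k|>k_j}\eta_{jk}\psi_{jk}(t)+\sum_{j\ge n}\sum_{k\in\mathbb Z}\eta_{jk}\psi_{jk}(t)$. Using $\mathbf{E}|\xi_{0k}|^2\le b_0$, $\mathbf{E}|\eta_{jk}|^2\le c_j$ and Cauchy--Schwarz on the double sums, each piece is bounded by the corresponding $c_j$ (or $b_0$) times $\big(\sum_{|k|>k_j}|\psi_{jk}(t)|\big)^2$. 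To estimate $\sum_{|k|>k_j}|\psi_{jk}(t)|$ over $|t|\le T$, I would split $|\psi_{jk}(t)|=|\psi_{jk}(t)|^{1-\gamma}|\psi_{jk}(t)|^{\gamma}$, bound the first factor by the a.e.\ bound $2^{j/2}\Phi(\ldots)$ coming from assumption $S(\gamma)$ (so $|\psi_{jk}(t)|^{1-\gamma}\le 2^{j(1-\gamma)/2}\Phi^{1-\gamma}(|2^j t-k|)$ is small when $|k|$ is large relative to $2^jT$), and bound the $\gamma$-power sum by Lemma~\ref{lemSgamma2}. This produces a bound of the form
$$\sum_{|k|\ge k_j}|\psi_{jk}(t)|\le 2^{j/2}\,g_j(k_j):=2^{j/2}\left(\int_{k_j-2^jT-1}^{\infty}\Phi^{\gamma}(t)\,dt+\int_{k_j-1}^{\infty}\Phi^{\gamma}(t)\,dt\right)^{?}\cdot(\text{const}),$$
where $g_j(k_j)\to 0$ as $k_j\to\infty$ because $\int\Phi^{\gamma}<\infty$. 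Summing, I would then \emph{define}
$$\varepsilon_{\mathbf{k}_n}:=\sqrt{b_0}\,2^{0}g_{0'}(k_0')+\sum_{j=0}^{n-1}\sqrt{c_j}\,2^{j/2}g_j(k_j)+\sum_{j\ge n}\sqrt{c_j}\,2^{j/2}\,G_j,$$
with $G_j:=\sup_{|t|\le T}\sum_k|\psi_{jk}(t)|$, which is finite and controlled by the same $S(\gamma)$ / Lemma~\ref{lem1} machinery; the tail series $\sum_{j\ge n}\sqrt{c_j}2^{j/2}G_j\to0$ by~(\ref{sum_cj}). Monotonicity (decreasing in $n,k_0',k_j$) is built into the definition, and $\varepsilon_{\mathbf{k}_n}\to0$; this is the label referenced as~(\ref{varepsilon_k_n}).

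With $\varepsilon_{\mathbf{k}_n}$ and $\delta(\varepsilon_{\mathbf{k}_n})$ in hand, Lemma~\ref{71} applied to $\mathbf{Y}$ gives exactly the asserted bound for all $u>8\delta(\varepsilon_{\mathbf{k}_n})$. I expect the main obstacle to be the uniform-in-$t$ truncated sum estimate for $\sum_{|k|>k_j}|\psi_{jk}(t)|$ on $[0,T]$: one must carefully track how the cutoff $k_j$ interacts with the dilation $2^j$ (the relevant scale is $2^jT$, not $T$), so that the $j$-series defining $\varepsilon_{\mathbf{k}_n}$ still converges — this is where the $\gamma<1$ splitting and Lemma~\ref{lemSgamma2} are essential, and where one must be sure the constants from the $\phi$-layer, the finitely many truncated $\psi$-layers, and the infinite tail of full $\psi$-layers are all absorbed compatibly with~(\ref{sum_cj}).
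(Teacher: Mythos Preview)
Your proposal is essentially the paper's proof: apply Lemma~\ref{71} to $\mathbf{Y}(t)=\mathbf{X}(t)-\mathbf{X}_{n,\mathbf{k}_n}(t)$, control the increments by a logarithmic $\sigma$ so that Lemma~\ref{lem7} bounds $I(\varepsilon_0)$ by $\delta(\varepsilon_0)$, and build $\varepsilon_{\mathbf{k}_n}$ from the three tail pieces via Lemmata~\ref{lem1} and~\ref{lemSgamma2}. The one point you flag but leave unresolved is handled explicitly in the paper by introducing
\[
J:=\min\Bigl\{n,\ \min\{j\in\mathbb N_0:\ k_j<2^jT+1\}\Bigr\},
\]
so that for $j<J$ the hypothesis $k_j\ge 2^jT+1$ of Lemma~\ref{lemSgamma2} is met (giving your $g_j(k_j)$ with exponent~$1$, no ``?''), while for $j\ge J$ one simply uses the full-sum bound from Lemma~\ref{lem1}; since $J\to\infty$ as $n\to\infty$ and all $k_j\to\infty$, this is exactly what makes $\varepsilon_{\mathbf{k}_n}\to0$ compatible with~(\ref{sum_cj}).
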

\begin{proof}
Let us verify that
$\mathbf
Y(t):=\mathbf{X}(t)-\mathbf{X}_{n,\mathbf{k}_n}(t)$ satisfies
(\ref{74}) with $\sigma(\varepsilon)$ given by
(\ref{sigma}).

First, we observe that
$$\left(\mathbf
E|\mathbf
Y(t)-\mathbf
Y(s)|^2\right)^{1/2}=\left(\mathbf
E\left|\sum_{|k|>k_0'}\xi_{0k}(\phi_{0k}(t)-\phi_{0k}(s))\right.\right.$$
$$\left.\left.+\sum_{j=0}^{n-1}\sum_{|k|>k_j}\eta_{jk}(\psi_{jk}(t)-\psi_{jk}(s))+\sum_{j=n}^{\infty}\sum_{k\in\mathbb
Z}\eta_{jk}(\psi_{jk}(t)-\psi_{jk}(s))\right| ^2\right)^{1/2}$$
$$\le\left(\mathbf
E\left|\sum_{|k|>k_0'}\xi_{0k}(\phi_{0k}(t)-\phi_{0k}(s))\right|
^2\right)^{1/2}+
\sum_{j=0}^{n-1}\left(\mathbf
E\left|\sum_{|k|>k_j}\eta_{jk}(\psi_{jk}(t)-\psi_{jk}(s))\right|
^2\right)^{1/2}$$
$$+\sum_{j=n}^{\infty}\left(\mathbf
E\left|\sum_{k\in\mathbb Z}\eta_{jk}(\psi_{jk}(t)-\psi_{jk}(s))\right| ^2\right)^{1/2}
:=\sqrt{S'}+\sum_{j=0}^{n-1}\sqrt{S'_j}+\sum_{j=n}^\infty
\sqrt{R'_j}\,.$$

We will only show how to handle $S'_j.$ A similar approach can be
used to deal with the remaining  terms $S'$ and $R'_j.$

By Lemmata~\ref{lem1} and \ref{lem2} we get
 \begin{eqnarray*}S'_j&\le& \sum\limits_{|k|> k_j}\sum\limits_{|l|> k_j}
 |\mathbf E\eta_{jk}\overline{\eta_{jl}}||\psi_{jk}(t)-\psi_{jk}(s)||\psi_{jl}(t)-\psi_{jl}(s)|\\
 &\le& c_j\left(\sum\limits_{|k|> k_j}
|\psi_{jk}(t)-\psi_{jk}(s)|^{\gamma}|\psi_{jk}(t)-\psi_{jk}(s)|^{1-\gamma}\right)^2\\
&\le& c_j\frac{\left(2^{
j/2}j^{\alpha}R_{1\alpha}\right)^{2(1-\gamma)}}{\left(\ln\left(e^{\alpha}+\frac1{|t-s|}
\right)\right)^{2\alpha(1-\gamma)}}\left(\sum\limits_{|k|> k_j}
|\psi_{jk}(t)-\psi_{jk}(s)|^{\gamma}\right)^2\\ &\le&
c_j\frac{\left(2^{
j/2}j^{\alpha}R_{1\alpha}\right)^{2(1-\gamma)}}{\left(\ln\left(e^{\alpha}+\frac1{|t-s|}
\right)\right)^{2\alpha(1-\gamma)}}\left(2\sup\limits_{|t|\le
T}\sum\limits_{|k|> k_j} |\psi_{jk}(t)|^{\gamma}\right)^2\\ &\le&
\frac{c_j2^{
j+2}\left(j^{\alpha}R_{1\alpha}\right)^{2(1-\gamma)}}{\left(\ln\left(e^{\alpha}+\frac1{|t-s|}
\right)\right)^{2\alpha(1-\gamma)}}\left(3\Phi^{\gamma}(0)+4\int_{1/2}^{\infty}\Phi^{\gamma}(t)\,dt\right)^2,
\end{eqnarray*}
where $R_{1\alpha}=\frac1{\pi}\int_{\mathbb
R}\ln^{\alpha}\left(e^{\alpha}+|u|+1\right)|\widehat{\psi}(u)|\,du.$

Hence
\begin{eqnarray}\label{sumSj}\sum\limits_{j=0}^{n-1}\sqrt{S'_j}
\le\frac{B_{\alpha}}{\left(\ln\left(e^{\alpha}+\frac1{|t-s|}
\right)\right)^{\alpha(1-\gamma)}},
\end{eqnarray} where
$$B_{\alpha}:=\sum\limits_{j=0}^{\infty}{\sqrt{c_j}2^{
(j+2)/2}\left(j^{\alpha}R_{1\alpha}\right)^{1-\gamma}}\left(3\Phi^{\gamma}(0)+4\int_{1/2}^{\infty}\Phi^{\gamma}(t)\,dt\right)<\infty\,.$$

From Lemma~\ref{lemsumdelta}, it follows that
\[\sum_{j=n}^\infty \sqrt{R'_j}\le  \frac{L
\cdot\sum_{j=n}^\infty\sqrt{c_j}2^{\frac
{j}2}j^{\alpha(1-\gamma)}} {\left(\ln\left(e^{\alpha}+
\frac1{|t-s|}\right)\right)^{\alpha(1-\gamma)}} ,\]
where
$$L:=2R_1^{1-\gamma}(\alpha) \left(3\Phi^{\gamma}(0)+4\int_{1/2}^{\infty}\Phi^{\gamma}(t)\,dt\right)\,.$$

Similarly to (\ref{sumSj}) by lemma~\ref{lem7}  we obtain
\begin{eqnarray*}\sqrt{S'}
\le\frac{b_0R_{0\alpha}^{1-\gamma}}{\left(\ln\left(e^{\alpha}+\frac1{|t-s|}
\right)\right)^{\alpha(1-\gamma)}}\left(\int_{k'_0-T-1}^{\infty}
\Phi^{\gamma}(t)\,dt+\int_{k'_0-1}^{\infty}\Phi^{\gamma}(t)\,dt\right),
\end{eqnarray*}
where $R_{0\alpha}=\frac1{\pi}\int_{\mathbb
R}\ln^{\alpha}\left(e^{\alpha}+\frac{|u|}2\right)|\widehat{\phi}(u)|\,du$ and $k_0'\ge T+1.$

Thus,
\[\left(\mathbf
E|\mathbf
Y(t)-\mathbf
Y(s)|^2\right)^{1/2}=\left(\mathbf
E\left|(\mathbf{X}(t)-\mathbf{X}_{n,\mathbf{k}_n}(t))-(\mathbf{X}(s)-\mathbf{X}_{n,\mathbf{k}_n}(s))\right|^2\right)^{1/2}\]
\[ \le \frac{C}{\left(\ln\left(e^{\alpha}+
\frac1{|t-s|}\right)\right)^{\alpha(1-\gamma)}}=:\sigma(|t-s|), \ \
\alpha(1-\gamma)>1/2,
\]
where
\[C:=L
\cdot\sum_{j=n}^\infty\sqrt{c_j}2^{\frac
{j}2}j^{\alpha(1-\gamma)}+B_{\alpha}
+b_0 R_{0\alpha}^{1-\gamma}\left(\int_{k'_0-T-1}^{\infty}
\Phi^{\gamma}(t)\,dt+\int_{k'_0-1}^{\infty}\Phi^{\gamma}(t)\,dt\right).\]
Then by lemmata~\ref{71} and \ref{lem7}
$$P\left\{\sup_{0\le t\le T} |\mathbf{X}(t)-\mathbf{X}_{n,\mathbf{k}_n}(t)|>u \right\}\le
2\exp\left\{-\frac{(u-\sqrt{8uI(\tilde{\varepsilon}_{\mathbf{k}_n})})^2}{2\tilde{\varepsilon}_{\mathbf{k}_n}^2}\right\}\,,$$
where $$\tilde{\varepsilon}_{\mathbf{k}_n}=\sup_{0\le t\le T}
\left(\mathbf
E\left|\mathbf{X}(t)-\mathbf{X}_{n,\mathbf{k}_n}(t)\right|^2\right)^{1/2}\,.$$

The proof will be completed by investigating $\tilde{\varepsilon}_{\mathbf{k}_n}$ as a
function of $\mathbf{k}_n.$

Note that
$$\left(\mathbf
E\left|\mathbf{X}(t)-\mathbf{X}_{n,\mathbf{k}_n}(t)\right|^2\right)^{1/2}\le\left(\mathbf
E\left|\sum_{|k|>k_0'}\xi_{0k}\phi_{0k}(t)\right| ^2\right)^{1/2}+$$
$$+\sum_{j=0}^{n-1}\left(\mathbf
E\left|\sum_{|k|>k_j}\eta_{jk}\psi_{jk}(t)\right| ^2\right)^{1/2}+\sum_{j=n}^{\infty}\left(\mathbf
E\left|\sum_{k\in\mathbb Z}\eta_{jk}\psi_{jk}(t)\right| ^2\right)^{1/2}\,.$$

Let $J:=\min \left\{n,\min\{j\in \mathbb N_0:k_j<2^jT+1\}\right\}.$ Notice that $J\to \infty,$ when $n\to\infty$
 and $k_j\to\infty$ for all $j\in \mathbb N_0\,.$

By Lemmata ~\ref{lem2} and \ref{lemSgamma2} for $k_0'\ge T+1$
\begin{eqnarray}\label{varepsilon_k_n}\tilde{\varepsilon}_{\mathbf{k}_n}&\le& b_0 \left(\int_{k'_0-T-1}^{\infty}
\Phi^{\gamma}(t)\,dt+\int_{k'_0-1}^{\infty}\Phi^{\gamma}(t)\,dt\right)+\sum_{j=0}^{J-1}\sqrt{c_j}2^{\frac {j}2}\left(\int_{k_j-2^jT-1}^{\infty}
\Phi^{\gamma}(t)\,dt\right.\nonumber
\\&+&\left.\int_{k_j-1}^{\infty}\Phi^{\gamma}(t)\,dt\right)+\sum_{j=J}^{\infty}\sqrt{c_j}2^{\frac
{j}2} \left(3\Phi^{\gamma}(0)+4\int_{1/2}^{\infty}\Phi^{\gamma}(t)\,dt\right)=:\varepsilon_{\mathbf{k}_n}.\end{eqnarray}

The choice of $J$ and assumption (\ref{sum_cj}) imply that $\varepsilon_{\mathbf{k}_n}\to 0,$ when $n\to\infty,$ $k_0'\to\infty,$
and $k_j\to\infty$ for all $j\in \mathbb N_0\,.$

It is worth noticing that $\delta(\cdot)$ is an increasing function,
$I(\varepsilon_0)\le \delta(\varepsilon_0)$ for any $\varepsilon_0,$
and $\tilde{\varepsilon}_{\mathbf{k}_n}\le
\varepsilon_{\mathbf{k}_n}.$ Hence, for
$u>8\delta(\varepsilon_{\mathbf{k}_n})$ we get

$$\exp\left\{-\frac{\left(u-\sqrt{8uI(\tilde{\varepsilon}_{\mathbf{k}_n})}\right)^2}{2\tilde{\varepsilon}_{\mathbf{k}_n}^2}\right\}\le \exp\left\{-\frac{\left(u-\sqrt{8u\delta(\varepsilon_{\mathbf{k}_n})}\right)^2}{2\varepsilon_{\mathbf{k}_n}^2}\right\}\,.$$

Finally, an application of Lemmata~\ref{71} and \ref{lem7} completes
the proof.
 \end{proof}
 
 \begin{remk}
Note, that $\varepsilon_{\mathbf{k}_n}\to 0,$ if and only if $n,$ $k_0',$ and all $k_j, j\ge 0,$ approach infinity.
 \end{remk}
 
 \begin{remk}
 If $\varepsilon_{\mathbf{k}_n}\to 0$ then $\delta(\varepsilon_{\mathbf{k}_n})\to 0.$ Therefore the convergence in the theorem is exponential with the rate bounded by $\displaystyle 2\exp\left\{-\frac{{const}}{\varepsilon_{\mathbf{k}_n}^2}\right\},$ where one can choose $const\approx u^2$ for $\delta(\varepsilon_{\mathbf{k}_n})\ll u.$
 \end{remk}
 
 \begin{remk} In the theorem we only require that $k_0',$ and all $k_j, j\ge 0,$ approach infinity.
 If we narrow our general class of wavelet expansions $\mathbf{X}_{n,\mathbf{k}_n}(t)$ by specifying rates of growth of the sequences $\mathbf{k}_n$ we can enlarge classes of wavelets bases and random processes in the theorem and obtain explicit rates of convergence by specifying $\varepsilon_{\mathbf{k}_n}.$ 
 
 For instance, consider the examples in Section~\ref{sec5}. It was shown that  $\sqrt{c_j}\le C/{2^{j/2(1+2\varkappa)}}.$ Let the sequences $\mathbf{k}_n:=(k_0',k_0,...,k_{n-1})$ be chosen so that $\int_{k^*}^{\infty} \Phi^{\gamma}(t)\,dt\le C/{2^{J\varkappa}}$ for $k^*:=\min(k_0'-T,k_0-T,k_1-2T,...,k_{J-1}-2^{J-1}T)-1.$  Then, by (\ref{varepsilon_k_n}) we get  $\varepsilon_{\mathbf{k}_n}\le C/{2^{J\varkappa}}.$  
 \end{remk}
 
 \begin{remk} Lemma~\ref{lem7} and formula~(\ref{varepsilon_k_n}) provide simple expressions to computer $\varepsilon_{\mathbf{k}_n}$ and $\delta(\varepsilon_{\mathbf{k}_n}).$ It allows  specifying Theorem~\ref{th7} for various stochastic processes and wavelets.
  \end{remk}

\section{Acknowledgements} The research of first two authors  was partially supported by La Trobe University Research Grant "Sto\-chastic Approximation in Finance and Signal Processing."
The authors are grateful for the referee's comments, which helped to improve the style of the presentation.

\end{document}